 \renewcommand{\theequation}{\arabic{section}.\arabic{equation}}
\newcommand{\cmdblackltimes}{\mathbin{\raisebox{0.2ex}{
\makebox[0.92em][l]{${\scriptstyle\blacktriangleright\mathrel{\mkern-4mu}<}$}}}}
\newcommand{\cotens}{\mathbin{\Box}}
\newcommand{\tens}{\otimes}
\newcommand{\id}{\mathsf{id}}
\newcommand{\mathc}{\mathbb{C}}
\newcommand{\mathq}{\mathbb{Q}}
\newcommand{\mathz}{\mathbb{Z}}
\newcommand{\mathr}{\mathbb{R}}
\newcommand{\Deltaa}{\triangle}
\newcommand{\eve}{{\bar{0}}}
\newcommand{\odd}{{\bar{1}}}
\newcommand{\G}{\mathbbmss{G}}
\newcommand{\U}{\mathbbmss{U}}
\newcommand{\B}{\mathbbmss{B}}
\newcommand{\T}{\mathbbmss{T}}
\renewcommand{\H}{\mathbbmss{H}}
\renewcommand{\P}{\mathbbmss{P}}
\newcommand{\K}{\mathbbmss{K}}
\newcommand{\s}{\mathsf{S}}
\newcommand{\ev}{{\mathsf{ev}}}
\newcommand{\Gev}{{\G_{\ev}}}
\newcommand{\Uev}{{\U_{\ev}}}
\newcommand{\Bev}{{\B_{\ev}}}
\newcommand{\Tev}{{\T_{\ev}}}
\newcommand{\Pev}{{\P_{\ev}}}
\newcommand{\Kev}{{\K_{\ev}}}
\newcommand{\Uevp}{{\U^+_{\ev}}}
\newcommand{\W}{\mathsf{W}}
\newcommand{\dominant}{\flat}
\newcommand{\GL}{\mathbbmss{GL}}
\newcommand{\GLL}{\mathrm{GL}}
\newcommand{\SL}{\mathbbmss{SL}}
\newcommand{\SpO}{\mathbbmss{SpO}}
\newcommand{\Q}{\mathbf{Q}}
\newcommand{\Ga}{G_{\mathrm{add}}}
\newcommand{\UU}{\mathcal{U}}
\newcommand{\X}{\mathsf{X}}
\newcommand{\MM}{\mathtt{M}}
\newcommand{\QQ}{\mathtt{Q}}
\newcommand{\g}{\mathfrak{g}}
\newcommand{\h}{\mathfrak{h}}
\newcommand{\n}{\mathfrak{n}}
\newcommand{\bb}{\mathfrak{b}}
\newcommand{\uu}{\mathfrak{u}}
\newcommand{\q}{\mathfrak{q}}
\newcommand{\gl}{\mathfrak{gl}}
\newcommand{\e}{\mathsf{e}}
\newcommand{\hy}{\mathsf{hy}}
\newcommand{\Dist}{\mathsf{Dist}}
\newcommand{\Lie}{\mathsf{Lie}}
\newcommand{\Ker}{\mathsf{Ker}}
\newcommand{\hits}{\rightharpoonup}
\newcommand{\Mat}{\mathsf{Mat}}
\newcommand{\gr}{\mathsf{gr}}
\newcommand{\Frac}{\mathsf{Frac}}
\newcommand{\Stab}{\mathsf{Stab}}
\newcommand{\Inv}{\mathsf{Inv}}
\newcommand{\spl}{\mathsf{spl}}
\newcommand{\Simple}{\mathsf{Simple}}
\newcommand{\res}{\mathsf{res}}
\newcommand{\ind}{\mathsf{ind}}
\newcommand{\coind}{\mathsf{coind}}
\newcommand{\rad}{\mathsf{rad}}
\newcommand{\soc}{\mathsf{soc}}
\newcommand{\corad}{\mathsf{corad}}
\newcommand{\Ind}{\mathsf{Ind}}
\newcommand{\coad}{\mathsf{coad}}
\newcommand{\rank}{\mathrm{rank}}
\newcommand{\ch}{\mathsf{ch}}
\newcommand{\co}{\mathsf{co}}
\newcommand{\charr}{\mathsf{char}}
\newcommand{\C}{\mathsf{Cl}}
\newcommand{\Mod}{\mathsf{Mod}}
\newcommand{\SMod}{\mathsf{SMod}}
\newcommand{\Aut}{{\mathsf{Aut}}}
\newcommand{\Cat}{{\mathcal{C}}}
\newtheorem{thm}{Theorem}[section]
\newtheorem{lemm}[thm]{Lemma}
\newtheorem{prop}[thm]{Proposition}
\newtheorem{cor}[thm]{Corollary}
\theoremstyle{definition}
\newtheorem{deff}[thm]{Definition}
\newtheorem{rem}[thm]{Remark}
\newtheorem{ex}[thm]{Example}
\newtheorem{exs}[thm]{Examples}
\begin{document}

\title[Borel-Weil Theorem for Algebraic Supergroups]{Borel-Weil Theorem for Algebraic Supergroups}

\author[T.~Shibata]{Taiki Shibata}
\address[T.~Shibata]{Department of Applied Mathematics,
Okayama University of Science,
1-1 Ridai-cho Kita-ku Okayama-shi, Okayama 700-0005, Japan}
\email{shibata@xmath.ous.ac.jp}

\date{\today}
\subjclass[2010]{14M30, 16T05, 17B10}
\keywords{Borel-Weil theorem,
Hopf superalgebra,
algebraic supergroup,
Chevalley supergroup,
quasireductive supergroup}

\maketitle

\begin{abstract}
We study the structure of an algebraic supergroup $\G$
and establish the Borel-Weil theorem for $\G$
to give a systematic construction of 
all simple supermodules over an arbitrary field.
Especially when $\G$ has a distinguished parabolic super-subgroup,
we show that the set of all simple supermodules of $\G$ is 
parameterized by the set of all dominant weights for the even part of $\G$, 
prove a super-analogue of the Kempf vanishing theorem, and 
give a description of Euler characteristics.
\end{abstract}



\section{Introduction}\label{sec:intro}
This paper aims to provide a framework of characteristic-free study of
representation theory of algebraic supergroups,
and to prove some new results including a super-analogue of the Borel-Weil theorem.
An algebraic supergroup is a representable group-valued functor
$\G$ defined on the category of commutative superalgebras,
such that the representing (necessarily, Hopf) superalgebra
$\mathcal{O}(\G)$ is finitely generated.
Here, the word ``super'' is a synonym of 
``graded by the group $\mathz_2=\{\eve,\odd\}$ of order two''.
There exists the largest ordinary ($=$ non-super) subgroup $\Gev$ of $\G$, 
called the {\it even part} of $\G$.
Since the notion of algebraic supergroups
is a generalization of algebraic groups,
it is natural to ask 
what is a good/nice generalization of 
reductive groups to our super setting.
In 2011, V. Serganova \cite{Ser11} answered this question,
introducing the notion of {\it quasireductive supergroups};
it is an algebraic supergroup 
whose even-part $\Gev$ is a split and connected reductive group.
There are plenty of examples of quasireductive supergroups;
general linear supergroups $\GL(m|n)$,
queer supergroups $\Q(n)$ (whose Lie superalgebra is the queer Lie superalgebra $\q(n)$),
and
all Chevalley supergroups of classical type 
(which is a super-analogue of the Chevalley-Demazure groups,
including special linear supergroups $\SL(m|n)$ and 
ortho-symplectic supergroups $\SpO(m|n)$)
due to R.~Fioresi and F.~Gavarini \cite{FioGav12,FioGav13} in 2012.

Over a field of characteristic zero,
the study of quasireductive supergroups $\G$ is 
essentially the same as its (necessarily, finite-dimensional) 
Lie superalgebra $\Lie(\G)$ of $\G$.
Ever since V.~Kac \cite{Kac77} classified
finite-dimensional simple Lie superalgebras over $\mathc$,
representations of $\Lie(\G)$ has been well-studied,
see V.~Kac \cite{Kac77-3}, I.~Penkov and V.~Serganova \cite{PenSer92}, and
A.~Sergeev \cite{Serg01} for example.
See also I.~Musson \cite{Mus12}, S.-J.~Cheng and W.~Wang \cite{CheWan12}.
Serganova \cite{Ser11}
gave a systematic construction of 
simple supermodules for quasireductive supergroups over 
an algebraically closed field of characteristic zero,
in terms of their Lie superalgebras.

On the other hand, 
the study of quasireductive supergroups
over a field of positive characteristic has just started.
A.~Zubkov and F.~Marko \cite{Mar18,MarZub18,Zub06,Zub16}
have obtained may results for representations of $\GL(m|n)$.
J.~Brundan and A.~Kleshchev \cite{BruKle03} studied representations of $\Q(n)$
which have a close relationship to modular representations of spin symmetric groups.
By using representations of $\GL(m|n)$,
Mullineux conjecture (now is Mullineux theorem) was re-proven 
by J.~Brundan and J.~Kujawa \cite{BruKuj03}.
B.~Shu and Wang \cite{ShuWan08} classified simple supermodules of $\SpO(m|n)$
whose parameter set is described by combinatorics terms,
which is related to Mullineux bijection.
Recently,
S.-J.~Cheng, B.~Shu and W.~Wang \cite{CheShuWan18} explicitly classified
all simple supermodules of (simply connected) Chevalley groups of type
$D(2|1;\zeta),G(3)$ and $F(3|1)$.

This paper proposes a framework of characteristic-free study of
representation theory of quasireductive supergroups $\G$.
The first purpose of the paper is to establish
the Borel-Weil theorem for $\G$.
Namely,
we give a systematic construction of all simple $\G$-supermodules
which extends Serganova's construction to arbitrary characteristic.
In the analytic situation,
the Borel-Weil-Bott theorem for classical Lie supergroups over $\mathc$,
was established by I.~Penkov \cite{Pen88-1} in 1990.
Recently, A.~Zubkov \cite{Zub16} proved the Borel-Weil-Bott theorem
for $\G=\GL(m|n)$ over a field.
In the following,
we explain the details of our approach.
First, 
using a concept of 
the theory of Harish-Chandra pairs due to A.~Masuoka \cite{Mas12},
we are able to find
a super-torus $\T$ and a Borel super-subgroup $\B$ of $\G$.
Next, we construct all simple $\T$-supermodules $\{\uu(\lambda)\}_{\lambda\in\Lambda}$ 
by the theory of Clifford algebras,
where $\Lambda$ is the character group of $\Tev$.
For the quotient superscheme $\G/\B$ (due to A.~Masuoka and A.~Zubkov \cite{MasZub11}),
we may consider cohomology groups 
$H^n(\lambda):= H^n(\G/\B,\mathscr{L}(\uu(\lambda)))$,
where $\lambda\in\Lambda$, $n\geq0$ and
$\mathscr{L}(\uu(\lambda))$ is the associated sheaf to $\uu(\lambda)$ on $\G/\B$.
Set $\Lambda^\dominant:=\{\lambda\in\Lambda \mid H^0(\lambda)\not=0\}$.
Our main result is the following.

\medskip

\noindent{\bf Theorem (Theorem~\ref{prp:LMQ}).}
{\it
For $\lambda\in\Lambda^\dominant$, 
the $\G$-socle $L(\lambda)$ of $H^0(\lambda)$ is simple.
The map $\lambda \mapsto L(\lambda)$ 
gives a one-to-one correspondence
between $\Lambda^\dominant$ and 
the isomorphism classes of all simple $\G$-supermodules up to parity change.
}

\medskip

The second purpose of the paper is to 
study quasireductive supergroups $\G$ which include
a distinguished ``parabolic'' super-subgroup $\P$ such that
$\Pev=\Gev$ and $\Lie(\P)_\odd=\Lie(\B)_\odd$.
Then we can describe $H^n(\lambda)$ in terms of
the non-super cohomology group 
$H^n_\ev(\lambda) := H^n(\Gev/\Bev,\mathscr{L}(\Bbbk^\lambda))$ as $\Tev$-supermodules,
where $\Bbbk^\lambda$ is the one-dimensional $\Tev$-module of weight $\lambda$.
This is a generalization of Zubkov's result 
\cite[Proposition~5.2 and Lemma~5.1]{Zub06} for $\G=\GL(m|n)$,
see also Marko \cite[\S1.2]{Mar18}.
As a corollary, we have the following results.

\medskip

\noindent{\bf Theorem (Section~\ref{sec:final-appl}).}
{\it
If $\G$ has such a parabolic super-subgroup $\P$,
then
\begin{enumerate}
\item
$\Lambda^\dominant$ coincides with 
the set of all dominant weights $\Lambda^+$ for $\Gev$,
\item
$H^n(\lambda)=0$ for $\lambda\in\Lambda^+$, $n\geq1$, and
\item
the formal character of $H^0(\lambda)$
is given by $\ch(H^0_\ev(\lambda)) \prod_{\delta\in\Delta^+_\odd}(1+\e^{-\delta})$
for $\lambda\in\Lambda^+$,
where $\Delta^+_\odd$ is the set of all positive odd roots of $\Lie(\G)$.
\end{enumerate}
}

\smallskip

\noindent{The} above (2) is a super-analogue of the Kempf vanishing theorem.

\medskip

The paper is organized as follows.
In Section~\ref{sec:prelim}, 
we review some basic definitions and results for algebraic supergroups
over a principal ideal domain (PID).
As a generalization of \cite[Definition~6.3]{MasShi17},
we re-define the notion of {\it sub-pairs} (Definition~\ref{def:sub-pair}).
In Section~\ref{sec:Quasired},
we give a definition of {\it quasireductive supergroups} over $\mathz$ 
(Definition~\ref{def:Quasired})
as a generalization of Serganova's one.
We study structure of quasireductive supergroups $\G$ over a PID;
super-tori $\T$, unipotent super-subgroups $\U,\U^+$ and 
Borel super-subgroups $\B,\B^+$ are given 
in \S\ref{sec:tori,unip} and \S\ref{sec:Borel}.
In particular, we give a PBW basis of Kostant's $\mathz$-form for $\G$ 
(Theorem~\ref{prp:PBW}).
As a consequence,
we prove an algebraic interpretation of
the ``density'' of the big cell $\B\times \B^+$ in $\G$
(Corollary~\ref{prp:inj}).
This is needed to prove the Borel-Weil theorem for our $\G$.
In the non-super situation, see 
B.~Parshall and J.~Wang \cite{ParWan91}, and 
J.~Bichon and S.~Riche \cite{BicRic16}.

In Section~\ref{sec:simples},
we work over a field $\Bbbk$ of characteristic not equal to $2$.
First, in \S\ref{sec:irrT}, we construct all simples $\{\uu(\lambda)\}_{\lambda\in\Lambda}$
of a super-torus $\T$,
using the theory of Clifford algebras (Appendix~\ref{sec:clif}).
Here, $\Lambda$ is the character group $\X(\Tev)$ of $\Tev$.
When $\Bbbk$ is algebraically closed,
we show that our simples are the same as Serganova's
(Remark~\ref{rem:Serganova}).
Next two subsections are devoted to 
cohomology groups $H^n(\lambda)$
and determination of all simple $\B$-supermodules.
In \S\ref{sec:irrG},
we prove our first main result,
that is,
the $\G$-socle $L(\lambda)$ of $H^0(\lambda)\not=0$ is simple,
and this gives a one-to-one correspondence between
the set $\Lambda^\dominant:=\{\lambda\in \Lambda \mid H^0(\lambda)\not=0 \}$
and the set of all isomorphism classes $\Simple_\Pi(\G)$ of 
simple $\G$-supermodules up to parity change.
We also give a necessary and sufficient condition
for $L(\lambda)$ to be of type $\QQ$,
that is,
$L(\lambda)\cong \Pi L(\lambda)$ as $\G$-supermodules,
using the language of Clifford algebras (Theorem~\ref{prp:LMQ}).
Here, $\Pi$ is the parity change functor.
Some further properties of the induced supermodule $H^0(\lambda)$ 
are discussed in \S\ref{sec:someH^0}.

In the final Section~\ref{sec:parab}, we still work over a field $\Bbbk$
of characteristic not equal to $2$.
A.~Zubkov \cite{Zub06} establish various results on $\GL(m|n)$,
using its ``parabolic'' super-subgroups.
In this section, we assume that our quasireductive supergroup $\G$
having a parabolic super-subgroup $\P$ such that 
$\Pev=\Gev$ and $\Lie(\P)_\odd=\Lie(\B)_\odd$.
In \S\ref{sec:tens-split},
we generalize some of results stated in \cite[\S5]{Zub06}.
In \S\ref{sec:final-appl}, 
we prove that for all $\lambda\in\Lambda^\dominant$ and $n\geq0$,
$H^n(\lambda)$ is isomorphic to 
$H^n_\ev(\lambda) \tens \wedge \Lie(\U^+)_\odd^*$
as $\Tev$-supermodules (Theorem~\ref{prp:main}),
where $H^n_\ev(\lambda)$ is the cohomology group for $\Gev$
and $\Lie(\U^+)_\odd^*$ is the dual space of the odd part of
$\Lie(\U^+)$.
As a consequence,
we get the following three results: 
(1) $\Lambda^\dominant$ coincides with 
the set of all dominant weights $\Lambda^+$ for $\Gev$,
(2) the Kempf vanishing theorem $H^n(\lambda)=0$ for 
$\lambda\in\Lambda^+$, $n\geq1$ (Corollary~\ref{prp:cor1}),
and 
(3) the Weyl character formula, that is,
an explicit description of 
the formal character of $H^0(\lambda)$ for $\lambda\in\Lambda^+$ (Corollary~\ref{prp:cor2}).
In \S\ref{sec:ex}, we see some examples.

\section{Preliminaries} \label{sec:prelim}

In this section, the base ring
$\Bbbk$ supposed to be a principal ideal domain (PID) of
characteristic different from $2$.
The unadorned $\tens$ denoted the tensor product over $\Bbbk$.
Note that, a $\Bbbk$-module is projective if and only if it is free.
A $\Bbbk$-module is said to be {\it finite} (resp. {\it $\Bbbk$-free/$\Bbbk$-flat}) 
if it is finitely generated (resp. free/flat).
In particular, we say that a $\Bbbk$-module is {\it finite free} 
if it is finite and $\Bbbk$-free.
All $\Bbbk$-modules form a symmetric tensor category $\Mod$
with the trivial symmetry $V\tens W \to W\tens V$; $v\tens w \mapsto w\tens v$,
where $V,W\in\Mod$.

\subsection{Superspaces}
The group algebra of the group $\mathz_2=\{\eve,\odd\}$
of order two over $\Bbbk$ is denoted by $\Bbbk\mathz_2$
which has a unique Hopf algebra structure over $\Bbbk$.
We let $\Cat:=(\Mod^{\Bbbk\mathz_2},\tens,\Bbbk)$ denote 
the tensor category of right $\Bbbk\mathz_2$-comodules,
that is, $\mathz_2$-graded modules over $\Bbbk$.
Here, $\Bbbk$ is regarded as a purely even object; $\Bbbk=\Bbbk\oplus0$.
In what follows, for $V=V_\eve\oplus V_\odd\in\Cat$,
an element $v$ in $V$ is always regarded as a homogeneous element of $V$.
We denote the {\it parity} of $v$ by $|v|$,
that is, $|v|=\epsilon$ if $v\in V_\epsilon$ ($\epsilon\in\mathz_2$).
For each $V,W\in\Cat$, the following is the so-called {\it supersymmetry}.
\[
c=c_{V,W} : V\tens W \longrightarrow W\tens V ;\quad
v\tens w \longmapsto (-1)^{|v||w|} w\tens v.
\]
In this way, $(\Mod^{\Bbbk\mathz_2},\tens,\Bbbk,c)$ forms 
a symmetric tensor category
which we denote by $\SMod$.
An object in $\SMod$ is called a {\it superspace}.
We let $\SMod(V,W)$ denote the set of all morphisms 
from $V$ to $W$ in $\SMod$.

For $V\in\SMod$, we define an object $\Pi V$ of $\SMod$
so that $(\Pi V)_\epsilon = V_{\epsilon+\odd}$ for each $\epsilon\in\mathz_2$.
For a morphism $f:V\to W$ in $\SMod$,
we define a morphism $\Pi f: \Pi V\to \Pi W$ in $\SMod$ so that $\Pi f=f$.
In this way, we get a functor $\Pi:\SMod \to \SMod$,
called the {\it parity change functor} on $\SMod$.
For $V,W\in\SMod$, we define an object
$\underline{\SMod}(V,W)$ of $\SMod$
as follows.
\[
\underline{\SMod}(V,W)_\eve := \SMod(V,W),
\quad
\underline{\SMod}(V,W)_\odd := \SMod(\Pi V,W).
\]
Given an object $V$ of $\SMod$,
we define a superspace $V^* := \underline{\SMod}(V,\Bbbk)$,
called the {\it dual} of $V$.

\medskip

A {\it superalgebra} (resp. {\it supercoalgebra/Hopf superalgebra/Lie superalgebra})
is defined to be an algebra (resp. coalgebra/Hopf algebra/Lie algebra) object in $\SMod$.
A superalgebra $A$ is said to be commutative
if $ab=(-1)^{|a||b|}ba$ for all $a,b\in A$.
For a supercoalgebra $C$ with the comultiplication $\Deltaa_C : C\to C\tens C$,
we use the Hyneman-Sweedler notation
$\Deltaa_C(c) = \sum_c c_{(1)} \tens c_{(2)}$,
where $c\in C$.

For a superalgebra $A$, we let ${}_A\SMod$ denote 
the category of all left $A$-supermodules.
Similarly, we let $\SMod^C$ denote
the category of all right $C$-supercomodules for 
a supercoalgebra $C$.
We also define a superspace 
$\underline{\SMod}^C(V,W) := \SMod^C(V,W) \oplus \SMod^C(\Pi V,W)$
for $V,W\in \SMod^C$.
The definition of $\underline{\SMod}_A(M,N)$ for $M,N\in\SMod_A$ will be clear.

\subsection{Algebraic Supergroups}\label{sec:alg}

An {\it affine supergroup scheme} ({\it supergroup}, for short) over $\Bbbk$
is a representable functor $\G$ from 
the category of commutative superalgebras
to the category of groups.
We denote the representing object of $\G$ by $\mathcal{O}(\G)$
which forms a commutative Hopf superalgebra.
A supergroup $\G$ is said to be {\it algebraic} (resp. {\it flat}) if
$A:=\mathcal{O}(\G)$ is finitely generated as a superalgebra (resp. $\Bbbk$-flat).
A {\it closed super-subgroup} of a supergroup $\G$ is a supergroup
which is represented by a quotient Hopf superalgebra of $A$.
For a supergroup $\G$,
we define its {\it even part} $\Gev$ as the restricted functor of $\G$
form the category of commutative algebras to the category of groups.
This $\Gev$ is an ordinary affine group scheme represented by
the quotient (ordinary) Hopf algebra $\overline{A}:=A/I_A$,
where $I_A$ is the super-ideal of $A$ generated by 
the odd part $A_\odd$ of $A$.
We denote the quotient map by
$A\twoheadrightarrow \overline{A}$; $a \mapsto \overline{a}$.
If $\G$ is algebraic, then so is $\Gev$.
An algebraic supergroup is said to be {\it connected}
if its even part is connected,
see \cite[Definition~8]{Mas12}.

\begin{ex}
For a superspace $V$ over $\Bbbk$,
we define a group functor $\GL(V)$ so that
\[
\GL(V)(R) \,\, := \,\, \underline{\Aut}_R(V\tens R),
\]
where $R$ is a commutative superalgebra
and $\underline{\Aut}_R(V\tens R)$
is the set of all bijective maps in
$\underline{\SMod}_R(V\tens R,V\tens R)$.
Suppose that $V$ is finite free with 
$m=\rank(V_\eve)$ and $n=\rank(V_\odd)$.
Then $\GL(V)$ becomes an algebraic supergroup,
called a {\it general linear supergroup},
and is denoted by $\GL(m|n)$.
The even part $\GL(m|n)_\ev$ is isomorphic to $\GLL_m\times \GLL_n$.
For the explicit form of the Hopf algebra structure of 
$\mathcal{O}(\GL(m|n))$,
see \cite{BruKuj03,Zub06} for example.
\qed
\end{ex}

Let $\G$ be a supergroup with $A:=\mathcal{O}(\G)$.
A superspace $M$ is said to be a {\it left $\G$-supermodule}
if there is a group functor morphism from $\G$ to $\GL(M)$.
We let ${}_{\G}\SMod$ denote the tensor category of left $\G$-supermodules.
One easily sees that $M$ forms a right $A$-supercomodule.
Conversely, any right $A$-supercomodule can be regard as a left $\G$-supermodule.
In this way, we may identify ${}_{\G}\SMod$ and $\SMod^{A}$.
Let $\H$ be a closed super-subgroup of $\G$ with $B:=\mathcal{O}(\H)$.
For a left $\G$-supermodule $M$ and a left $\H$-supermodule $V$,
we may consider its restricted $\H$-supermodule and induced $\G$-supermodule,
respectively;
\[
\res^{\G}_{\H}(M)\,\,:=\,\,\res^{A}_{B}(M)
\quad \text{and} \quad
\ind^{\G}_{\H}(V)\,\,:=\,\,\ind^{A}_{B}(V),
\]
via the quotient Hopf superalgebra map
$A\twoheadrightarrow B$,
see Appendix~\ref{app:cotens}.
Suppose that $\G$ and $\H$ are flat.
We also obtain the following Frobenius reciprocity, see \eqref{eq:FrobApp}.
\begin{equation} \label{eq:Frob}
\underline{\SMod}^B(\res^A_B(M),V) 
\,\,\overset{\cong}{\longrightarrow} \,\,
\underline{\SMod}^A(M,\ind^A_B(V)).
\end{equation}
Zubkov \cite[Propositon~3.1]{Zub06}
showed that the category ${}_{\H}\SMod$ of 
all left $\H$-supermodules has enough injectives.
Since the induction functor 
$\ind^{\G}_{\H}(-)$ is left exact,
we get its right derived functor 
$R^n \ind^{\G}_{\H}(-)$
form ${}_{\H}\SMod$ to ${}_{\G}\SMod$ ($n\in\mathz_{\geq0} = \{0,1,2,\dots\}$).

\medskip

A non-zero left $\G$-supermodule $L$
is said to be {\it simple} (or {\it irreducible}) 
if $L$ is simple as a right $A$-supercomodule,
that is, $L$ has no non-trivial $A$-super-subcomodule.
If $L$ is simple, then so is $\Pi L$.
As in \cite{BruKle03}, we shall use the following terminology.

\begin{deff} \label{def:MQ}
A simple left $\G$-supermodule $L$ is said to be of {\it type $\QQ$}
if $L$ is isomorphic to $\Pi L$ as left $\G$-supermodules,
and {\it type $\MM$} otherwise.
\end{deff}

Let $\Simple(\G)$ denote the set of isomorphism classes of 
simple left $\G$-supermodules.
The functor $\Pi$ naturally acts on $\Simple(\G)$
as a permutation of order $2$.
Let $\Simple_\Pi(\G)$ denote the set of $\langle \Pi \rangle$-orbits in $\Simple(\G)$.
In other words, two elements $L,L'$ in $\Simple(\G)$
coincides in $\Simple_\Pi(\G)$ if and only if $L\cong L'$ or $\Pi L'$ 
as left $\G$-supermodules.

\subsection{Lie superalgebras and Super-hyperalgebras}

In the following, we let $\G$ be an algebraic supergroup with $A:=\mathcal{O}(\G)$.
Set $A^+:=\Ker(\varepsilon_\G)$,
where $\varepsilon_\G : A \to \Bbbk$ is the counit of $A$.
As a $\Bbbk$-super-submodule of $A^*$, we let
\[
\Lie(\G) \,\ := \,\, (A^+/(A^+)^2)^*.
\]
This naturally forms a Lie superalgebra (see \cite[Proposition~4.2]{MasShi17}),
which we call the {\it Lie superalgebra of} $\G$.
Note that, $\Lie(\G)$ is finite free and
$\Lie(\G)_\eve$ can be identified with the (ordinary) Lie algebra $\Lie(\Gev)$ of $\Gev$.

For any $n\geq 1$, we may regard $(A/(A^+)^n)^*$ as 
a $\Bbbk$-super-submodule of $A^*$ through the dual of the canonical quotient map
$A\twoheadrightarrow A/(A^+)^n$.
Set
\[
\hy(\G) \,\, := \,\, \bigcup_{n\geq1} (A/(A^+)^n)^*.
\]
This $\hy(\G)$ forms a super-subalgebra of $A^*$.
We call it the {\it super-hyperalgebra of $\G$}.
It is sometimes called the {\it super-distribution algebra $\Dist(\G)$ of $\G$}.
Suppose that $\G$ is {\it infinitesimally flat}, that is,
$A/(A^+)^n$ is finitely presented and flat as $\Bbbk$-module 
(or equivalently, finite free) for any $n\geq1$.
Then one sees that $\hy(\G)$ has a structure of a cocommutative Hopf superalgebra
such that 
the restriction
\begin{equation} \label{eq:Hopf-pairing}
\langle\,\,\,,\,\, \rangle \,\,: \,\,
\hy(\G) \times A \longrightarrow \Bbbk
\end{equation}
of the canonical pairing $A^*\times A\to \Bbbk$
is a Hopf pairing,
see \cite[Lemma~5.1]{MasShi17}.
One sees that
$\Lie(\G)$ coincides with
the set of all {\it primitive elements}
$P(\hy(\G)) := 
\{u\in \hy(\G) \mid \sum_u u_{(1)}\otimes u_{(2)} = u\tens 1 + 1\tens u\}$
in $\hy(\G)$.

\begin{rem}[See \text{\cite[Remark~2]{Mas12}}] \label{rem:hy=U}
Suppose that $\Bbbk$ is a field of characteristic zero.
Then it is known that 
$\hy(\G)$ coincides with the universal enveloping superalgebra 
$\UU(\Lie(\G))$
of the Lie superalgebra $\Lie(\G)$ of $\G$.
\qed
\end{rem}

For a left $\G$-supermodule $V$,
we regard $V$ as a left $\hy(\G)$-supermodule by letting
\[
u . v \,\,:=\,\, 
\sum_v (-1)^{|v_{(0)}| |u|}\,v_{(0)}\, \langle u, v_{(1)} \rangle,
\]
where $u\in\hy(\G)$, $v\in V$ and $v\mapsto \sum_v v_{(0)}\tens v_{(1)}$
is the right $A$-supercomodule structure of $V$.
Suppose that $V$ is finite free.
Then the dual superspace $V^*$ of $V$
forms a right $A$-supercomodule by using the antipode $\s:A\to A$ of $A$.
The induced left $\hy(\G)$-supermodule structure,
which we shall denote by $\rightharpoonup$,
satisfies the following equation.
\[
\langle u \hits f , v \rangle \,\,=\,\,
(-1)^{|u| |f|} \langle f , \s^*(u) . v \rangle,
\]
where $v\in V$, $f\in V^*$, $u\in\hy(\G)$
and $\s^*$ is the antipode of $\hy(\G)$.

\subsection{Sub-pairs} \label{sec:sub-pairs}

Let $\G$ be an algebraic supergroup.
It is known that the Lie superalgebra $\Lie(\G)$ of $\G$ is {\it 2-divisible},
that is, 
for any $v\in \Lie(\G)_\odd$, there exists $x\in \Lie(\G)_\eve$ such that
$[v,v]=2x$, see \cite[Proposition~4.2]{MasShi17}.
We sometimes denote $x$ by $\frac{1}{2}[v,v]$.
Set $A:=\mathcal{O}(\G)$ and
\[
W^A \,\, := \,\, (A^+/(A^+)^2)_\odd.
\]
It is easy to see that the right coaction on $A$
\begin{equation} \label{eq:coad}
\coad : A \longrightarrow A\tens A;
\quad
a \longmapsto 
\sum_a (-1)^{|a_{(1)}||a_{(2)}|} a_{(2)} \tens \s(a_{(1)}) a_{(3)}
\end{equation}
induces on $W^A$ a right $\overline{A}$-comodule 
(or equivalently, left $\Gev$-module) structure,
where $\s$ is the antipode of $A$.
Thus, if $W^A$ is finite free,
then $\Lie(\G)_\odd$ can be regarded 
as a left $\Gev$-module.

Suppose that $\G$ satisfies the following three conditions.
\begin{enumerate}
\item[(E0)] $\Gev$ is flat,
\item[(E1)] $W^A$ is finite free, and
\item[(E2)] $\overline{A}^+/(\overline{A}^+)^2$ is finite free.
\end{enumerate}
The labels (E0)--(E2) above are taken from \cite[\S5.2]{MasShi18}.
In other words, we suppose that $\G$ is an object of 
the category $\text{(gss-fsgroups)}'_\Bbbk$,
see \cite[Remark~5.8]{MasShi18}
for the notation.
In this case, $\G$ is {\it $\tens$-split}, in the sense that
there exists a counit-preserving isomorphism
\begin{equation}
A \longrightarrow \overline{A} \tens \wedge (W^A)
\end{equation}
of left $\overline{A}$-comodule superalgebras,
where $\wedge(W^A)$ is the exterior superalgebra on $W^A$.
Here, we naturally regard $A$ as a left $\overline{A}$-comodule superalgebra
via $A\to \overline{A}\tens A$; 
$a\mapsto \sum_a \overline{a_{(1)}} \tens a_{(2)}$.

Note that, a $\Bbbk$-(super-)submodule of a finite free $\Bbbk$-(super)module
is again finite free, since $\Bbbk$ is a PID.
The following is a generalization of \cite[Definition~6.3]{MasShi18}.

\begin{deff}\label{def:sub-pair} 
Let $\G$ be an algebraic supergroup.
For a closed subgroup $K$ of $\Gev$ and 
a Lie super-subalgebra $\mathfrak{k}$ of $\Lie(\G)$
with $\Lie(K)=\mathfrak{k}_\eve$,
we say that the pair $(K,\mathfrak{k})$ is a {\it sub-pair} of
the pair $(\Gev,\Lie(\G))$, if
\begin{enumerate}
\item[(S0)] $K$ is flat,
\item[(S1)] $\mathfrak{k}_\odd$ is $K$-stable in $\Lie(\G)_\odd$, and
\item[(S2)] $\mathfrak{k}$ is 2-divisible.
\end{enumerate}
\end{deff}

\begin{rem} \label{rem:2-div}
In general, 
a Lie super-subalgebra $\mathfrak{k}$ of a $2$-divisible Lie superalgebra $\g$
is not always $2$-divisible.
For example, as super-subspaces of $\Mat_{1|1}(\mathz)$ 
(see Example~\ref{ex:Mat} for the notation), 
we take
\[
\mathfrak{k}=
\{ \left(\begin{array}{c|c} 2a & 3b \\ \hline 3b & 2a \end{array}\right)
\mid a,b,c\in\mathz \}
\quad \subseteq \quad
\g=
\{ \left(\begin{array}{c|c} a & b \\ \hline c & a \end{array}\right)
\mid a,b,c\in\mathz \}.
\]
The following calculation shows that this $\mathfrak{k}$ is {\it not} $2$-divisible.
\[
[\left(\begin{array}{c|c} 0 & 3 \\ \hline 3 & 0 \end{array}\right),
\left(\begin{array}{c|c} 0 & 3 \\ \hline 3 & 0 \end{array}\right)]
\,\,=\,\
2\left(\begin{array}{c|c} 9 & 0 \\ \hline 0 & 9 \end{array}\right).
\]
Working over a PID,
it is easy to see that
a Lie super-subalgebra of an {\it admissible} Lie superalgebra
(for the definition, see \cite[Definition~3.1]{MasShi17})
is admissible if and only is it is $2$-divisible.
\qed
\end{rem}

\begin{prop}\label{prp:sub-pair} 
Let $\G$ be an algebraic supergroup satisfying (E0)--(E2).
For a sub-pair $(K,\mathfrak{k})$ of the pair $(\Gev,\Lie(\G))$,
there exists a closed subgroup $\K$ of $\G$
which satisfies $\Kev=K$ and $\Lie(\K)=\mathfrak{k}$.
Moreover, $\K$ is $\tens$-split.
\end{prop}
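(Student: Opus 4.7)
The plan is to construct $\K$ as an explicit Hopf-superalgebra quotient of $A := \mathcal{O}(\G)$, exploiting the $\tens$-split decomposition guaranteed by (E0)--(E2). Fix a counit-preserving isomorphism
\[
\phi \colon A \xrightarrow{\,\sim\,} \overline{A} \tens \wedge(W^A)
\]
of left $\overline{A}$-comodule superalgebras. Write $B := \mathcal{O}(K) = \overline{A}/\overline{J}$ for the quotient Hopf algebra cutting out $K \subseteq \Gev$. Dualizing the inclusion $\mathfrak{k}_\odd \hookrightarrow \Lie(\G)_\odd = (W^A)^*$ yields a finite free quotient $\pi \colon W^A \twoheadrightarrow V$ with $V^* \cong \mathfrak{k}_\odd$; condition (S1) promotes $\pi$ to a morphism of right $B$-comodules, and (S0) keeps the resulting tensor products flat.

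Next, I would set $C := B \tens \wedge(V)$, with its evident commutative superalgebra and left $B$-comodule structures, and define
\[
\psi \colon A \xrightarrow{\phi} \overline{A} \tens \wedge(W^A) \twoheadrightarrow B \tens \wedge(V) = C.
\]
The crux of the argument is to show that $I := \ker(\psi)$ is a Hopf super-ideal of $A$, so that $C$ inherits a commutative Hopf superalgebra structure and $\K := \mathrm{Spec}(C)$ is the desired closed super-subgroup of $\G$. Granted this, the remaining assertions follow immediately: $\overline{C} = B$ gives $\Kev = K$; the identifications $(C^+/(C^+)^2)_\odd = V$ and $\Lie(\K)_\eve = \Lie(K) = \mathfrak{k}_\eve$ combine with the fact that $\mathfrak{k}$ is a sub-Lie-superalgebra of $\Lie(\G)$ to yield $\Lie(\K) = \mathfrak{k}$; and the product form $C = B \tens \wedge(V)$ witnesses that $\K$ is $\tens$-split.

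The main obstacle is verifying that $I$ is a coideal, i.e.\ $\Deltaa_A(I) \subseteq I \tens A + A \tens I$. The $\overline{A}$-direction is immediate since $\overline{J}$ is a Hopf ideal of $\overline{A}$. For the odd generators $w \in W^A$, the primitive coproduct $\Deltaa(w) = w \tens 1 + 1 \tens w$ descends because $\pi$ is $\overline{A}$-colinear, and the even--odd coactions descend by virtue of (S1). The genuinely subtle point is closure under multiplication: under $\phi$, the product of two odd generators of $A$ acquires an $\overline{A}$-component reflecting the odd--odd bracket of $\Lie(\G)$. For this component to vanish modulo $\overline{J}$ one needs $[\mathfrak{k}_\odd, \mathfrak{k}_\odd] \subseteq \Lie(K)$, which is exactly the sub-Lie-superalgebra hypothesis; and for the element $\tfrac{1}{2}[v,v]$ used in this identification to be well-defined already over the base ring $\Bbbk$ one needs the $2$-divisibility condition (S2)---without it, the counterexample in Remark~\ref{rem:2-div} shows that the putative quotient would fail to be a Hopf superalgebra over $\Bbbk$. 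Once $I$ is shown to be a bi-ideal, compatibility with the antipode follows formally from the fact that $\phi$ is counit-preserving and that the antipode of $A$ preserves $\overline{A}$ up to odd corrections that already lie in $I$.
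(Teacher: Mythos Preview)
Your approach differs substantially from the paper's. The paper does not construct $\mathcal{O}(\K)$ directly as a quotient of $A$; instead it invokes the categorical equivalence $\text{(sHCP)}'_\Bbbk \approx \text{(gss-fsgroups)}'_\Bbbk$ between super Harish-Chandra pairs and supergroups, established in \cite{MasShi18}, and reduces the proof to checking that $(K,\mathfrak{k})$ satisfies the axioms (F0)--(F5) of that category. This is a short verification against (S0)--(S2): (F0) is (S0); (F1) follows from $\Lie(K)=\mathfrak{k}_\eve$; (F2) is inherited from (E2) for $\Gev$; (F3)--(F4) come from (S1); and (F5) uses (S2) to define the $2$-operation $v^{\langle 2\rangle}:=\tfrac12[v,v]$ on $\mathfrak{k}_\odd$.

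Your direct approach has a genuine gap. The splitting $\phi$ is only an isomorphism of left $\overline{A}$-comodule superalgebras, \emph{not} of Hopf superalgebras: the comultiplication $\Deltaa_A$, transported to $\overline{A}\tens\wedge(W^A)$ via $\phi$, is not the tensor product of $\Deltaa_{\overline{A}}$ with the standard primitive coproduct on $\wedge(W^A)$. Thus your claim that ``for the odd generators $w\in W^A$, the primitive coproduct $\Deltaa(w)=w\tens1+1\tens w$ descends'' is not well-posed: elements of $W^A=(A^+/(A^+)^2)_\odd$ are cosets, not elements of $A$, and arbitrary lifts to $A$ are not primitive in $A$. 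Your $\psi$ is a surjective superalgebra map, so $I=\ker\psi$ is a super-ideal, but showing it is a \emph{coideal} requires controlling how $\Deltaa_A$ behaves under $\phi$, and this is precisely the non-trivial content packaged into the Harish-Chandra equivalence of \cite{MasShi18}. Your heuristic that (S1), (S2), and the bracket condition govern the relevant cross-terms is morally correct, but as written it is not a proof; making it rigorous amounts to reproducing the construction in \cite{MasShi18} that the paper simply cites.
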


\begin{proof}
We freely use the notations in \cite[\S5]{MasShi18}.
To construct the desired supergroup $\K$,
it is enough to show that the pair $(K,\mathfrak{k})$
is object of the category $\text{(sHCP)}'_\Bbbk
\,(\approx \text{(gss-fsgroups)}'_\Bbbk)$,
that is, $K$ and $\mathfrak{k}$ satisfy
the conditions (F0)--(F5) defined in \cite[\S5.1, 5.2]{MasShi18}.

The condition (F0) is just our (S0).
By the assumption $\Lie(K)=\mathfrak{k}_\eve$, the condition (F1) is obvious.
Since $\mathcal{O}(K)$ is a quotient Hopf algebra of $\mathcal{O}(\Gev)$ and
$\Gev$ satisfies the condition (E2),
one sees that $\mathcal{O}(K)$ satisfies the condition (F2).
Since $\G\in \text{(gss-fsgroups)}'_\Bbbk$,
the pair $(\Gev,\Lie(\G))$ is an object of the category $\text{(sHCP)}'_\Bbbk$.
Then by the assumption (S1), 
we see that $(K,\mathfrak{k})$ satisfies the conditions (F3) and (F4).
By the assumption (S2),
we can define a $2$-operation 
$(-)^{\langle2\rangle}:\mathfrak{k}_\odd \to\mathfrak{k}_\eve$
by letting $v^{\langle2\rangle}:=\frac12 [v,v]$.
This is obviously $K$-stable, which shows that the condition (F5) is satisfied.
\end{proof}

\section{Structures of Quasireductive Supergroups over a PID} \label{sec:Quasired}

It is known that
the category of all representations of a split reductive groups over a field
of characteristic zero
is semisimple,
that is,
any object is decomposed into a direct sum of simple modules 
(i.e., irreducible representations).
However,
algebraic supergroups $\G$
whose representation category is semisimple are rather restricted.
Over an algebraically closed field of characteristic zero,
Weissauer showed that 
such $\G$ (which are not ordinary algebraic groups)
are essentially exhausted by 
ortho-symplectic supergroups $\SpO(2n|1)$, see \cite[Theorem~6]{Wei09}.
On the other hand, over a field of positive characteristic,
Masuoka showed that
such $\G$ are necessarily purely even, that is, $\G=\Gev$, see \cite[Theorem~45]{Mas12}.

Hence, it is natural to ask 
what is a nice/good generalization of the notion of 
split reductive groups.
In 2011, Serganova \cite{Ser11} answered this problem.
She introduced the notion of ``quasireductive supergroups'' over a field.

In this section, we still suppose that $\Bbbk$
is a PID of characteristic not equal to $2$,
and we generalize her definition to our $\Bbbk$ and study its structure.

\subsection{Quasireductive Supergroups}

A split and connected reductive $\mathz$-group $G_\mathz$
is a certain connected algebraic group over $\mathz$
having a split maximal torus $T_\mathz$
such that the pair $(G_\mathz, T_\mathz)$ corresponds to 
some root datum,
see \cite[Part~II, Chap.~1]{Jan03} (see also \cite[\S5.2]{MasShi17}).
It is known that $\mathcal{O}(G_\mathz)$ is free as a $\mathz$-module and 
$G_\mathz$ is infinitesimally flat.

\begin{deff}\label{def:Quasired}
An algebraic supergroup $\G_{\mathz}$ over $\mathz$
is said to be {\it quasireductive} if
its even part $(\G_{\mathz})_{\ev}$ is 
a split and connected reductive group over $\mathz$
and $W^{\mathcal{O}(\G_\mathz)}$ is finitely generated and free as a $\mathz$-module.
\end{deff}

There are plenty of examples of quasireductive supergroups.

\begin{exs}
The following are quasireductive supergroups over $\mathz$.
\begin{enumerate}
\item
General linear supergroups $\GL(m|n)$.

\item
{\it Queer supergroups} $\Q(n)$:
\[
\Q(n)(R)
:=
\{ \left(\begin{array}{c|c} X & Y \\ \hline -Y & X \end{array}\right) 
\in \GL(n|n)(R)\},
\]
where $R$ is a commutative superalgebra over $\mathz$.
The even part is $\Q(n)_\ev=\GL_n$.
For the Hopf superalgebra structures of $\Q(n)$, 
see Brundan and Kleshchev \cite{BruKle03}.
The Lie superalgebra $\q(n)$ of $\Q(n)$ is 
the so-called {\it queer Lie superalgebra}.

\item
Chevalley supergroups of classical type 
defined by Fioresi and Gavarini \cite{FioGav12,FioGav13},
see also \cite[\S6]{MasShi17}.
For example, 
special linear supergroups $\SL(m|n)$ (consists of elements in $\GL(m|n)$
whose {\it Berezinian determinants} are trivial), 
ortho-symplectic supergroups $\SpO(m|n)$ 
(see Shu and Wang \cite{ShuWan08} for details), etc.

\item
{\it Periplectic supergroups} $\mathbf{P}(n)$, 
see Example~\ref{ex:P(n)}.
\qed
\end{enumerate}
\end{exs}

In what follows,
we fix a quasireductive algebraic supergroup $\G_\mathz$ over $\mathz$
and a split maximal torus $T_\mathz$ of $(\G_\mathz)_{\ev}$.
Set $G_\mathz:=(\G_{\mathz})_{\ev}$ and $\Lambda := \X(T_\mathz)$.
Note that, the character group $\X(T_\mathz)$ of $T_\mathz$
can be identified with the set of all {\it group-like elements} of 
the Hopf algebra $\mathcal{O}(T_\mathz)$.
Let $\G$ (resp. $G$, $T$) denote the base change of 
$\G_{\mathz}$ (resp. $G_\mathz$, $T_\mathz$) to our base ring $\Bbbk$,
that is, $\mathcal{O}(\G):=\mathcal{O}(\G_\mathz)\tens_\mathz \Bbbk$.
Since $\Bbbk$ is an integral domain,
we can identify $\Lambda$ with $\X(T)\cong \mathz^{\ell}$,
where $\ell$ is the rank of $G$.

By definition,
we see that the algebraic supergroup $\G$
satisfies the conditions (E0)--(E2).
Therefore, $\G$ is $\tens$-split:
\begin{equation} \label{eq:psi}
\psi\,:\,
\mathcal{O}(\G)
\,\, \overset{\cong}{\longrightarrow} \,\,
\mathcal{O}(G) \otimes \wedge (W^{\mathcal{O}(\G)}).
\end{equation}
In particular, we have the following.

\begin{prop} \label{prp:inf-flat}
$\mathcal{O}(\G)$ is $\Bbbk$-free and $\G$ is infinitesimally flat.
\end{prop}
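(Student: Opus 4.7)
The plan is to leverage the $\tens$-split isomorphism $\psi : \mathcal{O}(\G) \overset{\cong}{\to} \mathcal{O}(G) \tens \wedge(W^{\mathcal{O}(\G)})$ from \eqref{eq:psi}, which (since $\G$ satisfies (E0)--(E2)) is a counit-preserving superalgebra isomorphism. The $\Bbbk$-freeness of $\mathcal{O}(\G)$ is then immediate: $\mathcal{O}(G) = \mathcal{O}(G_{\mathz}) \tens_{\mathz} \Bbbk$ is $\Bbbk$-free because $\mathcal{O}(G_{\mathz})$ is $\mathz$-free (recalled just before Definition~\ref{def:Quasired}), and $\wedge(W^{\mathcal{O}(\G)})$ is finite free since $W^{\mathcal{O}(\G)}$ is so by the quasireductivity hypothesis. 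Their tensor product is therefore $\Bbbk$-free.

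For infinitesimal flatness I would show that $A_{n} := \mathcal{O}(\G)/(\mathcal{O}(\G)^{+})^{n}$ is finite free for every $n \geq 1$. Write $B := \mathcal{O}(G)$, $C := \wedge(W^{\mathcal{O}(\G)})$, $I := B^{+}$, $J := C^{+}$, and $d := \rank(W^{\mathcal{O}(\G)})$. Because $\psi$ preserves counits and algebra structures, it identifies $\mathcal{O}(\G)^{+}$ with $I \tens C + B \tens J$, and graded commutativity yields
\[
(\mathcal{O}(\G)^{+})^{n} \,\,=\,\, \sum_{i+j = n,\ i,j \geq 0} I^{i} \tens J^{j}.
\]
As $W^{\mathcal{O}(\G)}$ is purely odd we have $J^{j} = \bigoplus_{k \geq j} \wedge^{k}(W^{\mathcal{O}(\G)})$ for $0 \leq j \leq d$, and $J^{j} = 0$ for $j > d$. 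Decomposing $C$ by exterior degree and tracking the contribution in each piece gives a $\Bbbk$-module isomorphism
\[
A_{n} \,\,\cong\,\, \bigoplus_{k=0}^{\min(n-1,\,d)} \bigl(B/I^{\,n-k}\bigr) \tens \wedge^{k}(W^{\mathcal{O}(\G)}).
\]

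To conclude, each summand is finite free: the split connected reductive $\mathz$-group $G_{\mathz}$ is infinitesimally flat (recalled before Definition~\ref{def:Quasired}), so every $\mathcal{O}(G_{\mathz})/(\mathcal{O}(G_{\mathz})^{+})^{m}$ is finite free over $\mathz$; base change to $\Bbbk$ preserves freeness, and tensoring with the finite free module $\wedge^{k}(W^{\mathcal{O}(\G)})$ and then taking a finite direct sum still produces a finite free $\Bbbk$-module. The only genuinely delicate step is the displayed identity for $(\mathcal{O}(\G)^{+})^{n}$, which rests on $\psi$ being an isomorphism of superalgebras rather than just of $\overline{A}$-comodules; once that is in hand, the proposition reduces to the two properties hard-wired into the definition of a quasireductive supergroup, namely infinitesimal flatness of $G_{\mathz}$ and finite freeness of $W^{\mathcal{O}(\G)}$.
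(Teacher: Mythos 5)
Your proof is correct, and it fills in exactly the argument the paper leaves implicit: the paper states this Proposition as an immediate consequence (``In particular, we have the following'') of the $\tens$-split isomorphism $\psi$ in \eqref{eq:psi} together with the facts, recalled just before Definition~\ref{def:Quasired}, that $\mathcal{O}(G_\mathz)$ is $\mathz$-free and $G_\mathz$ is infinitesimally flat, and your computation of $(\mathcal{O}(\G)^+)^n$ and the resulting finite-free decomposition of $\mathcal{O}(\G)/(\mathcal{O}(\G)^+)^n$ is a correct and careful way to make that implication explicit. In particular your observations that $\psi$ is a counit-preserving superalgebra isomorphism (so it matches augmentation ideals), that $\mathcal{O}(G)$ is purely even (so the super tensor product is an ordinary commutative tensor product and the ideal identity $(I\tens C + B\tens J)^n=\sum_{i+j=n}I^i\tens J^j$ holds), and that $J^j=\bigoplus_{k\geq j}\wedge^k(W^{\mathcal{O}(\G)})$ by exterior degree, are all accurate and are exactly the points one must check.
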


Let $\g=\g_\eve\oplus\g_\odd$ be the Lie superalgebra $\Lie(\G)$ of $\G$.
The left adjoint action of $T$ on $\G$ induces an action of $T$ on $\g$
which preserves the parity of $\g$.
Since $T$ is a diagonalizable group scheme,
the left $T$-supermodule $\g$ decomposes into weight superspaces as follows.
\[
\g
\,\,=\,\,
\bigoplus_{\alpha\in \Lambda} \g^\alpha
\,\,=\,\,
(\bigoplus_{\alpha\in\Lambda} \g_{\eve}^\alpha )
\,\oplus\,
(\bigoplus_{\delta\in\Lambda} \g_{\odd}^\delta ),
\]
where $\g^\alpha$ is the $\alpha$-weight super-subspace of $\g$.
It is known that
\[
\g^\alpha
\,\,=\,\,
\{ X\in \g \mid u \hits X = \langle u, \alpha \rangle X \text{ for all } u\in\hy(T) \},
\]
see \cite[Part~I, 7.14]{Jan03}.
Here, we regard $\Lambda$ as a subset of $\mathcal{O}(T)$.
Let $\h:=\g^{\alpha=0}$ be the $\alpha=0$ weight super-subspace of $\g$
which is a Lie super-subalgebra of $\g$.
Note that, the even part $\g_\eve$ of $\g$ coincides with
the Lie algebra $\Lie(G)$ of $G=\Gev$.
By definition, we see that $\h_\eve = \Lie(T)$.
For $\epsilon\in\mathz_2$, we set
\[
\Delta_\epsilon
\,\,:=\,\,
\{ \alpha\in\Lambda \mid \g_\epsilon^{\alpha} \not=0 \} \setminus \{0\}
\]
and
\[
\Delta
\,\,:=\,\,
\begin{cases}
\Delta_\eve \cup \Delta_\odd            & \text{if } \h_{\odd}=0, \\
\Delta_\eve \cup \Delta_\odd \cup \{0\} & \text{otherwise.} 
\end{cases}
\]
We shall call $\Delta$ the {\it root system} of $\G$ with respect to $T$.
Note that, $\Delta_\eve$ is the root system of $G$ with respect to $T$ in the usual sense.
Suppose that $\G=\Q(n)$ with the standard maximal torus $T$ of $\Gev=\GL_n$.
Then one sees that $\h_\odd \not=0$, hence $0\in\Delta$.

\medskip

By taking the linear dual of \eqref{eq:psi}, 
we see that $\hy(\G)$ is {\it $\tens$-split},
that is,
there exists a (unique) unit-preserving isomorphism
\begin{equation} \label{eq:phi}
\phi \,:\,
\hy(G) \otimes \wedge(\g_\odd)
\,\, \overset{\cong}{\longrightarrow} \,\,
\hy(\G)
\end{equation}
of left $\hy(G)$-module supercoalgebras satisfying
\begin{equation} \label{eq:canopair}
\langle \phi(z),\, a \rangle
\,\,=\,\,
\langle z,\, \psi(a) \rangle,
\qquad 
a\in \mathcal{O}(\G),\,\, z\in \hy(G) \otimes \wedge(\g_\odd).
\end{equation}
For details, see \cite[Lemma~5.2]{MasShi17}.

The dual $\hy(\G)^*$ of the cocommutative Hopf superalgebra $\hy(\G)$ 
naturally has a structure of a commutative superalgebra.
Since our quasireductive supergroup $\G$ is connected,
the canonical Hopf paring \eqref{eq:Hopf-pairing} induces
an injection $\mathcal{O}(\G)\hookrightarrow \hy(\G)^*$ of superalgebras,
see \cite[Lemma~5.3]{MasShi17}.

A left $\hy(\G)$-supermodule $M$ is called a 
{\it left $\hy(\G)$-$T$-supermodule}
if the restricted left $\hy(T)$-module structure on $M$
arises from some left $T$-module structure on $M$.
In this case, $M$ is said to be {\it locally finite}
if $M$ is so as a left $\hy(\G)$-supermodule.
Then by \cite[Theorem~5.8]{MasShi17},
we have the following.

\begin{thm} \label{prp:MS5.8}
For a left $\G$-supermodule,
the induced left $\hy(\G)$-supermodule is
a locally finite left $\hy(\G)$-$T$-supermodule.
This gives an equivalence of categories between
$\Bbbk$-free, left $\G$-supermodules
and
$\Bbbk$-free, locally finite left $\hy(\G)$-$T$-supermodules.
\end{thm}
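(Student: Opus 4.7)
The plan is to construct mutually inverse functors between the two categories. In the forward direction, given a $\Bbbk$-free right $\mathcal{O}(\G)$-supercomodule $M$, endow it with the $\hy(\G)$-action defined in Section~2.3 via the Hopf pairing \eqref{eq:Hopf-pairing}. The restricted $\hy(T)$-action coincides with the one induced by composing the coaction with the projection $\mathcal{O}(\G)\twoheadrightarrow \mathcal{O}(T)$, so $M$ is automatically an $\hy(\G)$-$T$-supermodule. Local finiteness follows from the fundamental theorem of comodules: every element of $M$ lies in a $\Bbbk$-finitely generated subcomodule, which is stable under the induced $\hy(\G)$-action.

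For the reverse direction, let $N$ be a $\Bbbk$-free, locally finite left $\hy(\G)$-$T$-supermodule with $T$-weight decomposition $N=\bigoplus_{\lambda\in\Lambda}N^\lambda$. Under the $\tens$-splitting $\phi$ of \eqref{eq:phi}, the restricted $\hy(G)$-action together with the $T$-structure makes $N$ into a locally finite $\hy(G)$-$T$-module in the non-super sense; by the classical correspondence (cf.\ \cite[Part~I, \S7]{Jan03}) this lifts uniquely to a right $\mathcal{O}(G)$-comodule structure $\rho_G\colon N\to N\tens \mathcal{O}(G)$. Fix a homogeneous basis $\xi_1,\dots,\xi_d$ of $\g_\odd$ with dual basis $\xi_1^*,\dots,\xi_d^*$ of $W^{\mathcal{O}(\G)}$ under \eqref{eq:canopair}, and for each ordered subset $I\subseteq\{1,\dots,d\}$ write $\xi_I\in\wedge(\g_\odd)$ and $\xi_I^*\in\wedge(W^{\mathcal{O}(\G)})$ for the corresponding wedge products. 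Then, using $\psi$ of \eqref{eq:psi}, define
\[
\rho(v) \,\,:=\,\, \psi^{-1}\!\Bigl(\,\sum_I (-1)^{\sigma_I}\,\rho_G(\xi_I . v)\,\tens\, \xi_I^*\,\Bigr),
\]
the signs $\sigma_I$ being those forced by the supersymmetry of the Hopf pairing.

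The main obstacle is verifying that $\rho$ defines a genuine right $\mathcal{O}(\G)$-supercomodule structure: coassociativity, counitality and the property that it lands in $N\tens \mathcal{O}(\G)$ and not merely in the completion $N\tens \hy(\G)^*$. Coassociativity reduces, via the pairing compatibility \eqref{eq:canopair} between $\phi$ and $\psi$, to the fact that $\phi$ is a supercoalgebra isomorphism; counitality is immediate since $\psi$ preserves counits. For the image to land in $\mathcal{O}(\G)$, one uses connectedness of $\G$: by \cite[Lemma~5.3]{MasShi17} we have an embedding $\mathcal{O}(\G)\hookrightarrow \hy(\G)^*$, and the matrix coefficients of $N$ under the locally finite $\hy(\G)$-action are representative functions on $\hy(\G)$; combined with the $T$-weight grading and $\Bbbk$-freeness they are identified through $\psi$ with elements of $\mathcal{O}(\G)$.

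Once $\rho$ is established as an $\mathcal{O}(\G)$-coaction, the verification that the two functors are mutually quasi-inverse on objects and morphisms is routine: starting from a supercomodule $M$ and reconstructing $\rho$ from its induced $\hy(\G)$-action reproduces the original coaction by the defining formula of the action combined with the injectivity $\mathcal{O}(\G)\hookrightarrow\hy(\G)^*$, and starting from $N$ and re-extracting the action from the constructed $\rho$ returns the given $\hy(\G)$-structure by direct computation. Morphisms are automatically preserved because each functor is the identity on underlying superspaces.
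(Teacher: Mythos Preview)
The paper does not give a proof of this theorem: it is stated as a direct consequence of \cite[Theorem~5.8]{MasShi17}, with no argument supplied here. So there is no in-paper proof to compare against; what you have written is an attempt to reconstruct the argument of the cited reference.

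Your overall strategy is sound and is essentially the one used in \cite{MasShi17}: reduce to the classical (non-super) equivalence between $\Bbbk$-free $G$-modules and $\Bbbk$-free locally finite $\hy(G)$-$T$-modules, and then transport the odd part through the $\tens$-splittings $\phi,\psi$ of \eqref{eq:phi}, \eqref{eq:psi}. The forward direction is unproblematic. For the reverse direction your explicit formula for $\rho$ is correct in spirit, but the justification you give for coassociativity is too quick. The map $\psi$ is only a superalgebra (not supercoalgebra) isomorphism, so the comultiplication of $\mathcal{O}(\G)$ does not become the tensor-product comultiplication on $\mathcal{O}(G)\tens\wedge(W^{\mathcal{O}(\G)})$; hence ``$\phi$ is a supercoalgebra isomorphism'' does not by itself yield coassociativity of your $\rho$. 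The clean way around this is to define $\rho$ directly via matrix coefficients of the locally finite $\hy(\G)$-action (so that coassociativity is automatic from associativity of the action), and then use $\phi$, $\psi$, the pairing compatibility \eqref{eq:canopair}, and the non-super equivalence only to show that those matrix coefficients, a priori in $\hy(\G)^*$, actually lie in $\mathcal{O}(G)\tens\wedge(W^{\mathcal{O}(\G)})\cong\mathcal{O}(\G)$. Your last paragraph already gestures at this; organizing the argument that way removes the gap.

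One further small point: you invoke \cite[Part~I, \S7]{Jan03} for the non-super equivalence, but that reference works over a field, whereas the present section is over a PID. The needed statement over a PID is established in \cite{MasShi17}; you should cite that (or reprove it) rather than Jantzen.
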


\subsection{Super-tori and Unipotent Super-subgroups} \label{sec:tori,unip}

Recall that, $G=\Gev$, $\Lie(T)=\h_\eve$ and $\mathcal{O}(T)$ is a free $\mathz$-module.

\begin{lemm} \label{prp:tori-pair}
The pair $(T,\h)$ forms a sub-pair of $(G,\g)$.
\end{lemm}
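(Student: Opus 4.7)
The plan is to verify the three conditions (S0)--(S2) of Definition~\ref{def:sub-pair} for the pair $(T,\h)$. The only condition that requires real work is the $2$-divisibility of $\h$; the other two are essentially immediate from the setup.

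First I would dispatch (S0). Since $T_\mathz$ is a split torus, $\mathcal{O}(T_\mathz)\cong \mathz\Lambda$ is $\mathz$-free, so after base change $\mathcal{O}(T)=\mathcal{O}(T_\mathz)\tens_\mathz\Bbbk$ is $\Bbbk$-free and in particular flat. Next I would handle (S1): by construction $\h_\odd = \g_\odd^{\alpha=0}$ is a weight space for the $T$-action on $\g_\odd$, and any weight space of a $T$-supermodule is automatically $T$-stable. So both (S0) and (S1) are essentially unpacking of definitions.

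The main obstacle is (S2), the $2$-divisibility of $\h$. Here I would start from the fact, recalled right before Definition~\ref{def:sub-pair}, that $\g=\Lie(\G)$ is $2$-divisible as a whole. Given $v\in\h_\odd\subseteq\g_\odd$, pick $x\in\g_\eve$ with $[v,v]=2x$. The task is to show $x$ actually lies in $\h_\eve=\Lie(T)$, i.e.\ that $x$ has weight $0$ under $T$. For this I would decompose $x=\sum_{\alpha\in\Lambda}x_\alpha$ according to the weight-space decomposition
\[
\g_\eve = \bigoplus_{\alpha\in\Lambda}\g_\eve^{\alpha},
\]
which is available because $T$ is diagonalizable. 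Since $v\in\g_\odd^{0}$, the bracket $[v,v]$ lies in $\g_\eve^{0}$, hence $2x_\alpha = 0$ for every $\alpha\neq 0$. By Proposition~\ref{prp:inf-flat} (together with (E0)--(E2)) each weight space $\g_\eve^{\alpha}$ is a $\Bbbk$-submodule of the $\Bbbk$-free module $\g_\eve$; as $\Bbbk$ is a PID of characteristic different from $2$, multiplication by $2$ is injective on $\g_\eve^{\alpha}$, forcing $x_\alpha=0$ for $\alpha\neq 0$. Thus $x=x_0\in\h_\eve$, giving a witness $x\in\h_\eve$ for the $2$-divisibility of $v$.

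Finally I would note that $\mathfrak{k}:=\h$ is by construction a Lie super-subalgebra of $\g$ (as the weight-$0$ part of the adjoint $T$-action) and that $\Lie(T)=\h_\eve$ holds by the very definition of $\h$, so the hypotheses of Definition~\ref{def:sub-pair} are met. This would complete the verification that $(T,\h)$ is a sub-pair of $(G,\g)$.
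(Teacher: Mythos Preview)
Your proposal is correct and follows essentially the same approach as the paper: (S0) and (S1) are immediate, and for (S2) one uses that $[v,v]$ lies in the weight-$0$ part of $\g_\eve$ together with the fact that $2$ is a non-zero-divisor in the PID $\Bbbk$. The only cosmetic difference is that the paper phrases the last step via the $\hy(T)$-action (showing $u\hits X=0$ for $u$ outside $\Bbbk$), whereas you write out the weight decomposition $x=\sum_\alpha x_\alpha$ explicitly; these are equivalent formulations of the same argument.
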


\begin{proof}
The condition (S0) is obvious.
By definition, we see that $\h$ is $\hy(T)$-stable,
and hence (S1) is clear.
To see (S2), we take $K\in\h_\odd$ and $X\in\g$ so that $[K,K]=2X$.
For all $u\in\hy(T)$ with $u\not\in\Bbbk$,
we see that $2(u\hits X)=u\hits [K,K]=0$.
Since $\charr(\Bbbk)\not=2$, we conclude that $X\in\h_\eve$.
\end{proof}

Then by Proposition~\ref{prp:sub-pair},
we get a closed super-subgroup $\T$ of $\G$ satisfying 
\begin{equation} \label{eq:T}
\Tev = T,\qquad \Lie(\T) = \h.
\end{equation}
We shall call $\T$ a {\it super-torus of $\G$}.

Let $\mathz\Delta$ be the abelian subgroup of $\X(T)$ generated by $\Delta$.
In what follows,
we fix a homomorphism $\gamma : \mathz\Delta \to \mathr$ of abelian groups
such that $\gamma(\alpha)\not=0$ for any $0\not=\alpha\in\Delta$.
As in \cite[\S9.3]{Ser11}, we set
\[
\Delta^\pm
\,\,:=\,\,
\{ \alpha\in\Delta \setminus \{ 0 \} \mid \pm\gamma(\alpha)>0 \},
\qquad
\Delta_\epsilon^\pm 
\,\,:=\,\,
\Delta_\epsilon \cap \Delta^\pm
\]
for $\epsilon\in\mathz_2$.
Set
\[
\n^\pm
\,\,:=\,\,
\bigoplus_{\alpha\in\Delta^\pm} \g^\alpha,
\qquad
\bb^\pm
\,\,:=\,\,
\n^\pm \oplus \h.
\]
One easily sees that, these are Lie super-subalgebras of $\g$.

As in \cite[Part~II, 1.8]{Jan03}, 
there is a connected and unipotent subgroup $U$ of $G$
such that
\[
U
\,\, \cong \,\,
\prod_{\alpha\in\Delta_{\eve}^-} \Ga,
\qquad
\Lie(U)
\,\,=\,\,
\n_{\eve}^-,
\]
where $\Ga$ is the one-dimensional additive group scheme over $\Bbbk$.
In the product above, we have fixed an (arbitrary) ordering of $\Delta_{\eve}^-$.

\begin{lemm} \label{prp:unip-pair}
The pair $(U, \n^-)$ forms a sub-pair of $(G,\g)$.
\end{lemm}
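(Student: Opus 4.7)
My plan is to verify the three conditions (S0), (S1), (S2) of Definition~\ref{def:sub-pair} one at a time, in the style of the proof of Lemma~\ref{prp:tori-pair}. The matching $\Lie(U)=\n^-_\eve$ is built into the definition of $U$ taken from \cite[Part~II, 1.8]{Jan03}, so the compatibility $\Lie(K)=\mathfrak{k}_\eve$ is already in place. Condition (S0) is immediate: $U\cong\prod_{\alpha\in\Delta^-_\eve}\Ga$ as a scheme, and each $\Ga$ is $\Bbbk$-flat.

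For (S2), I will exploit the $2$-divisibility of $\g$ together with the $T$-weight decomposition. Given $v\in\n^-_\odd$, pick $x\in\g_\eve$ with $[v,v]=2x$. Writing $v=\sum_{\delta\in\Delta^-_\odd} v_\delta$ with $v_\delta\in\g^\delta_\odd$, the bracket becomes $[v,v]=\sum_{\delta,\delta'}[v_\delta,v_{\delta'}]$, each summand landing in $\g^{\delta+\delta'}_\eve$. Since $\gamma(\delta+\delta')=\gamma(\delta)+\gamma(\delta')<0$, no summand has weight zero, and each non-vanishing term lies in $\g^\beta_\eve$ with $\beta\in\Delta^-_\eve$; hence $[v,v]\in\n^-_\eve$. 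Decomposing $x=\sum_\beta x_\beta$ weight-by-weight and using that $\g_\eve$ is $\Bbbk$-free (hence $2$-torsion-free), I conclude $x_\beta=0$ whenever $\beta\notin\Delta^-_\eve$, so $x\in\n^-_\eve$.

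Condition (S1) is the step I expect to be most delicate. To show that $\n^-_\odd$ is $U$-stable inside $\g_\odd$ under the adjoint $G$-action, I plan to work root subgroup by root subgroup: the root subgroups $U_\alpha\cong\Ga$ for $\alpha\in\Delta^-_\eve$ multiply (in the chosen ordering) to the whole of $U$, so it is enough to check $U_\alpha$-stability. By the standard exponential description of the $U_\alpha$-coaction, for $v\in\g^\delta_\odd$ the action of a point $t\in U_\alpha$ sends $v$ to $\sum_{n\geq 0}t^n\,e_\alpha^{(n)}\hits v$, where $e_\alpha^{(n)}\in\hy(U_\alpha)$ is the divided power of the Chevalley root vector acting via $\ad$. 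Since $\ad$ respects the weight grading, $e_\alpha^{(n)}\hits v\in\g^{\delta+n\alpha}_\odd$, and $\gamma(\delta+n\alpha)=\gamma(\delta)+n\gamma(\alpha)<0$ for every $n\geq 0$, so $\delta+n\alpha\in\Delta^-_\odd$ or $\g^{\delta+n\alpha}=0$. Thus each term lies in $\n^-_\odd$, i.e.\ the $U_\alpha$-coaction on $\n^-_\odd$ factors through $\n^-_\odd\tens\mathcal{O}(U_\alpha)$. Composing over all root subgroups then yields $U$-stability.

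The main obstacle, compared to the torus case of Lemma~\ref{prp:tori-pair}, is exactly this passage from the hyperalgebra level to the group level in (S1): for $T$ one can read the weights off directly from $\hy(T)$, whereas here one has to integrate through each $\Ga$-factor. The divided-power exponential formula makes this integration completely explicit, so the weight-grading computation above closes the gap without needing a more general equivalence between $\hy(U)$- and $U$-stable subspaces.
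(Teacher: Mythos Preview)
Your proof is correct and follows essentially the same strategy as the paper: both verify (S2) via the $T$-weight grading and the fact that $\gamma$ is strictly negative on sums of negative roots, and both handle (S1) by first checking stability under the divided powers $X_\alpha^{(n)}$ via the weight shift $\delta\mapsto\delta+n\alpha$, then integrating to the group via the exponential comodule formula (the paper does this for all of $\hy(U)$ at once, citing \cite[Part~II, 1.20]{Jan03}, whereas you do it one root subgroup $U_\alpha$ at a time and then multiply, which is an equivalent slicing of the same argument). Your treatment of (S2) is in fact slightly more careful than the paper's, which only checks root vectors $X\in\g^\delta_\odd$ rather than general $v\in\n^-_\odd$.
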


\begin{proof}
The condition (S0) is clear.
Take $X\in\g^\delta$ ($\delta\in\Delta^-_\odd$)
and $Y\in\g$ so that $[X,X]=2Y$.
For all $u\in\hy(T)$,
we see that $2(u\hits Y) = 2\langle u, \delta \rangle [X,X]$.
Since $2\delta\in\Delta^-_\eve$ and $\charr(\Bbbk)\not=2$,
we conclude that $Y\in \n^-_\eve$, and hence $\n^-$ is $2$-divisible (S2).
It remains to prove (S1), that is, 
$\n_{\odd}^-$ is $U$-stable in $\g_\odd$.
Since $\n_{\odd}^-$ is obviously $G$-stable in $\g_\odd$,
we see that it is a right $\hy(G)$-module via 
the induced action.

First, we show that $\n_{\odd}^-$ is $\hy(U)$-stable in $\g_\odd$.
It is known that
we can choose an element $X_\alpha\in\g_{\eve}^\alpha$ for each 
$\alpha\in\Delta_{\eve}^-$ so that
the set
\[
\{ \prod_{\alpha\in\Delta_{\eve}^-} X_\alpha^{(n_\alpha)} \mid 
n_\alpha=0,1,2,3,\dots \}
\]
forms a $\Bbbk$-basis of $\hy(U)$.
Here,
$X_\alpha^{(n_\alpha)}$
denotes the {\it divided power},
see \cite[Part~II, 1.12]{Jan03} (see also \S\ref{sec:Kostant} below).
We take $X_{\beta}\in \n_{\odd}^-$ and $X_{\alpha}^{(n_\alpha)} \in \hy(U)$.
It is easy to see that 
the $T$-weight of $X_\alpha^{(n_\alpha)} \hits X_\beta \,(\in\g)$
is given by $\beta+n_\alpha\alpha$.
Since $\beta\in\Delta_{\odd}^-$ and $\alpha\in\Delta_{\eve}^-$,
we see $\gamma(\beta+n_\alpha\alpha) = \gamma(\beta)+n_\alpha\gamma(\alpha)\leq 0$,
and hence $\beta+n_\alpha\alpha$ is in $\Delta_{\odd}^-$.
Thus, $X_\alpha^{(n_\alpha)} \hits X_\beta$ is an element of $\n_{\odd}^-$.
This implies that $\n_{\odd}^-$ is $\hy(U)$-stable in $\g_\odd$.

Next, we show that $\n_{\odd}^-$ is indeed $U$-stable.
By construction, 
the corresponding Hopf algebra $\mathcal{O}(U)$ of $U$ is
isomorphic to the polynomial algebra $\Bbbk[T_\alpha;\,\alpha\in\Delta_{\eve}^-]$
in $\#(\Delta_{\eve}^-)$-variables $T_\alpha$ over $\Bbbk$ with each $T_\alpha$ is primitive.
Since $\n_{\odd}^-$ is finite free,
the right $\hy(U)$-module structure on $\n_{\odd}^-$ induces
the following (well-defined) left $\mathcal{O}(U)$-comodule structure on $\n_{\odd}^-$.
\[
\n_{\odd}^- \longrightarrow \mathcal{O}(U) \otimes \n_{\odd}^-;
\quad
Y \longmapsto 
\sum_{\alpha\in\Delta_{\eve}^-} \sum_{n_\alpha \geq 0}
T_\alpha^{n_\alpha} \otimes (X_\alpha^{(n_\alpha)} \hits Y),
\]
see \cite[Part~II, 1.20]{Jan03} for detail.
Thus, we conclude that $\n_{\odd}^-$ is $U$-stable.
\end{proof}

By Proposition~\ref{prp:sub-pair},
we obtain a closed super-subgroup $\U$ of $\G$
which satisfies
\begin{equation} \label{eq:U}
\Uev \,\,=\,\, U,
\qquad
\Lie(\U) \,\,=\,\, \n^-.
\end{equation}
The supergroup $\U$ is connected,
and hence the canonical Hopf paring induces
an injection $\mathcal{O}(\U)\hookrightarrow \hy(\U)^*$
of superalgebras, as before.
By \cite[Theorem~41]{Mas12}, we get the following.

\begin{prop} \label{prp:unip}
Suppose that $\Bbbk$ is a field.
Then the supergroup $\U$ is unipotent,
that is, the corresponding Hopf superalgebra $\mathcal{O}(\U)$ is irreducible.
\end{prop}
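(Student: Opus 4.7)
The plan is to invoke \cite[Theorem~41]{Mas12} directly. That theorem characterizes when an affine supergroup built from the data of a sub-pair is unipotent, in terms of unipotency of its classical even part together with the local nilpotence of the odd wedge factor in its $\otimes$-split decomposition; the remaining work is to check that both hypotheses hold for $\U$.

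First, from Proposition~\ref{prp:sub-pair} the supergroup $\U$ is $\otimes$-split, so there is an isomorphism
\[
\mathcal{O}(\U) \,\,\cong\,\, \mathcal{O}(U) \otimes \wedge(W^{\mathcal{O}(\U)})
\]
of counit-preserving left $\mathcal{O}(U)$-comodule superalgebras, with $W^{\mathcal{O}(\U)}$ finite-dimensional over the field $\Bbbk$. This places $\U$ in the category to which Masuoka's criterion applies, with classical piece $\mathcal{O}(U)$ and odd generating superspace $W^{\mathcal{O}(\U)}$.

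Second, I would verify unipotency of the even part: by the construction preceding Lemma~\ref{prp:unip-pair}, $\Uev = U$ is, as a scheme, the product $\prod_{\alpha \in \Delta_\eve^-} \Ga$, so it is a connected unipotent algebraic group over $\Bbbk$. Consequently, $\mathcal{O}(U)$ is a polynomial Hopf algebra with primitive generators, which is well-known to be irreducible as a coalgebra \cite[Part~I, \S2]{Jan03}. On the odd side, the augmentation ideal of $\wedge(W^{\mathcal{O}(\U)})$ is nilpotent, because every element of $W^{\mathcal{O}(\U)}$ squares to zero in the supercommutative superalgebra $\mathcal{O}(\U)$; in particular the only grouplike of this exterior factor is $1$.

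With both hypotheses verified, \cite[Theorem~41]{Mas12} yields irreducibility of $\mathcal{O}(\U)$, i.e., unipotency of $\U$. The main subtlety, which the cited theorem handles for us, is that the $\otimes$-split decomposition is an isomorphism of comodule superalgebras but not of supercoalgebras, so one cannot simply compute $\corad(\mathcal{O}(\U))$ as $\corad(\mathcal{O}(U)) \otimes \corad(\wedge(W^{\mathcal{O}(\U)}))$; bridging this gap by tracking grouplikes through the full Hopf-superalgebra structure is exactly the content of Masuoka's result, so once its hypotheses are in place the proof is complete.
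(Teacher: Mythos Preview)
Your proposal is correct and follows the same route as the paper: the paper's entire argument is the single sentence ``By \cite[Theorem~41]{Mas12}, we get the following,'' appearing immediately before the proposition. Your elaboration of the hypotheses (unipotency of $U=\Uev$ and the automatic nilpotence of the augmentation ideal of the exterior factor) is accurate, though note that Masuoka's criterion is usually stated simply as ``$\G$ is unipotent iff $\Gev$ is unipotent,'' so the second condition you check is not an independent hypothesis but part of why the theorem works.
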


Similarly, we get
a closed super-subgroup $\U^+$ of $\G$
such that $\Uevp = U^+$, $\Lie(\U^+)=\n^+$,
where $U^+$ is an unipotent subgroup of $G$ corresponds to $\Delta^+_\eve$.

\subsection{Borel Super-subgroups} \label{sec:Borel}
Our next aim is to construct ``Borel super-subgroups'' of $\G$.

\begin{lemm} \label{prp:T-ad-U}
$\T$ normalizes $\U$.
\end{lemm}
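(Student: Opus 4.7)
The plan is to reduce the claim, via the Harish-Chandra pair machinery of \S\ref{sec:sub-pairs} (in particular Proposition~\ref{prp:sub-pair}), to pair-level conditions on $(T,\h)$ and $(U,\n^-)$. Since $\G$ is $\tens$-split and the super-subgroups $\T,\U$ are obtained from sub-pairs via the equivalence between $\tens$-split algebraic supergroups and sub-pairs of $(\Gev,\g)$ (cf.\ \cite{MasShi18}), the conjugation action of $\T$ on $\G$ is determined by the conjugation action of $T$ on $G$ together with the adjoint action of $\h$ on $\g$. Consequently, in order to conclude that the conjugation morphism $\T\times \U\to \G$ factors through $\U$, it is enough to verify

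\begin{enumerate}
\item[(a)] $T$ normalizes $U$ in $G$;
\item[(b)] $\n^-$ is stable under the $T$-action on $\g$, and $[\h,\n^-]\subseteq\n^-$.
\end{enumerate}

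For (a), this is part of the standard root-datum theory of split connected reductive groups: by construction $U\cong\prod_{\alpha\in\Delta_{\eve}^-}\Ga$, and $T$ acts on each factor through the character $\alpha$, so $T$ stabilizes $U$ under conjugation (see \cite[Part~II, 1.8, 1.9]{Jan03}). For the first half of (b), recall that $\n^-=\bigoplus_{\alpha\in\Delta^-}\g^\alpha$ is by definition a direct sum of $T$-weight super-subspaces of $\g$, and is therefore automatically $T$-stable. For the second half of (b), I would use the weight decomposition together with the fact that the Lie super-bracket is $T$-equivariant, so weights add: for $X\in \h=\g^0$ and $Y\in\g^\alpha$ with $\alpha\in\Delta^-$ we have $[X,Y]\in\g^\alpha\subseteq\n^-$. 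Hence $[\h,\n^-]\subseteq\n^-$.

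Combining (a) and (b), and invoking that the structure of a closed super-subgroup and its normalizer in a $\tens$-split algebraic supergroup is completely encoded at the level of the associated sub-pair, the sub-pair $(T,\h)$ lies in the normalizer sub-pair of $(U,\n^-)$ inside $(\Gev,\g)$, so $\T$ normalizes $\U$. The only mildly technical point, which is the possible obstacle, is to confirm that ``normalization at the pair level'' really coincides with normalization at the supergroup level; this, however, is a direct consequence of the $\tens$-splitting $\mathcal O(\G)\cong \mathcal O(G)\tens\wedge(W^{\mathcal O(\G)})$ in \eqref{eq:psi} (which restricts compatibly to $\T$ and $\U$) together with the Harish-Chandra pair equivalence of \cite{MasShi18}, both of which are already in force under our standing hypotheses (E0)--(E2).
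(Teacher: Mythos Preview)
Your strategy is the same as the paper's---reduce to pair-level conditions via the Harish-Chandra correspondence---but your list (a), (b) is incomplete, and the missing piece is precisely the ``mildly technical point'' you wave away. The criterion actually used, \cite[Theorem~6.6(1)]{MasShi18}, requires in addition that
\[
\h_\odd \,\subseteq\, \Inv_U(\g_\odd/\n_\odd^-),
\]
i.e.\ that the $U$-action on each $X\in\h_\odd$ is trivial modulo $\n_\odd^-$. This is \emph{not} a consequence of $[\h,\n^-]\subseteq\n^-$: it concerns how the even group $U$ moves the odd vectors of $\h$, not how $\h$ brackets with $\n^-$. Your assertion that the conjugation action of $\T$ on $\G$ is determined by the $T$-conjugation on $G$ together with the adjoint action of $\h$ on $\g$ suppresses exactly this ingredient; the pair data also carries the full $G$-action on $\g_\odd$, and the odd part of the normalizer sub-pair depends on the $U$-module structure of $\g_\odd/\n_\odd^-$. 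The paper checks this extra condition by a short weight argument (for $X\in\h_\odd$ of weight $0$ and $u=X_\alpha^{(n)}\in\hy(U)$ with $n\geq1$, the element $u\hits X$ has weight $n\alpha\in\Delta^-$, hence lies in $\n_\odd^-$, while $\varepsilon_U^*(u)=0$).

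There is a second omission. We are working over a PID $\Bbbk$, but \cite[Theorem~6.6(1)]{MasShi18} is available over a field. The paper therefore first proves $\T_Q\subseteq\mathcal{N}_{\G_Q}(\U_Q)$ over $Q=\Frac(\Bbbk)$ and then descends to $\Bbbk$ using that $\mathcal{O}(\T)$ is $\Bbbk$-free (so $\mathcal{O}(\T)\hookrightarrow\mathcal{O}(\T_Q)$, and the quotient map $\mathcal{O}(\mathcal{N}_{\G_Q}(\U_Q))\twoheadrightarrow\mathcal{O}(\T_Q)$ restricts to one over $\Bbbk$). Your argument invokes the pair equivalence directly over $\Bbbk$ without justifying this step.
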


\begin{proof}
First, we work over $Q:=\Frac(\Bbbk)$
and show that
$\T_{Q} := \T \times_{\Bbbk} Q$ is included in
the {\it normalizer} 
$\mathcal{N}_{\G_{Q}}(\U_{Q})$
of $\U_Q$ in $\G_Q$.
In the following, for simplicity,
we shall drop the subscript $Q$.
By \cite[Thorem~6.6(1)]{MasShi18},
to show $\T \subseteq \mathcal{N}_{\G}(\U)$,
it is enough to check the following conditions (i) and (ii):
\begin{enumerate}
\item[(i)]
$T$ is contained in $\mathcal{N}_G(U) \cap \Stab_G(\n_\odd^-)$;
\item[(ii)]
$\h_{\odd}$ is contained in
$\Inv_U(\g_\odd / \n_{\odd}^-) \cap (\n_{\eve}^- : \n_{\odd}^-)$.
\end{enumerate}
Here, $\Stab_G(\n_1^-)$ is the {\it stabilizer} of $\n_\odd^-$ in $G$,
$\Inv_U(\g_\odd / \n_{\odd}^-)$ is the largest $U$-submodule of $\g_\odd$
including $\n_\odd^-$ whose quotient $U$-module by $\n_\odd^-$ is trivial,
and
$(\n_{\eve}^- : \n_{\odd}^-) := 
\{ X\in \g_\odd \mid [X,\n_\odd^-] \subseteq \n_\eve^-\}$.

By the definition of root spaces, $\n_{\odd}^-$ is trivially $T$-stable.
Since $T\subseteq \mathcal{N}_G(U)$ is obvious,
the condition (i) is clear.
It is easy to see that $[\h_\odd,\n_\odd^-]\subseteq \n_{\eve}^-$.
Thus, we have $\h_\odd\subseteq (\n_{\eve}^-:\n_{\odd}^-)$.
By using the Hopf pairing $\hy(U)\otimes \mathcal{O}(U)\to Q$,
one sees that 
\[
\Inv_U(\g_\odd /\n_{\odd}^-)
\,=\,
\{ X\in\g_\odd \mid (u \hits X) - \varepsilon_U^*(u)X 
\in\n_{\odd}^- \text{ for all } u
\in\hy(U) \},
\]
where $\varepsilon_U^*$ is the counit of $\hy(U)$.
For each $X\in \h_\odd$ and $u = X_\alpha^{(n)}\in\hy(U)$
($\alpha\in\Delta_{\eve}^-$, $n\in\mathz_{\geq1}$),
the weight of $u \hits X$ is $n\alpha\in\Delta^-$.
Since $\varepsilon_U^*(u)=0$, we have
$(u\hits X) - \varepsilon_U^*(u)X \in \n_{\odd}^-$.
If $u\in Q$, then $(u \hits X) - \varepsilon_U^*(u)X=0$.
This shows $\h_\odd \subset \Inv_U(\g_\odd/\n_{\odd}^-)$, 
and hence the condition (ii) follows.
Thus, we get $\T\subseteq \mathcal{N}_{\G}(\U)$ over $Q$.

Next, we prove that $\T\subseteq \mathcal{N}_{\G}(\U)$ over $\Bbbk$.
Since $\mathcal{O}(\T)$ is $\Bbbk$-free, 
the canonical map $\mathcal{O}(\T)\to \mathcal{O}(\T_{Q})$ is injective.
Then one easily sees that the canonical quotient map
$\mathcal{O}(\mathcal{N}_{\G_{Q}}(\U_{Q}))
\twoheadrightarrow \mathcal{O}(\T_{Q})$
induces the following diagonal arrow:
\[
\begin{xy}
(0,0)  *+{\mathcal{O}(\G)}  ="1",
(25,0)  *+{\mathcal{O}(\mathcal{N}_{\G}(\U))}  ="2",
(0,-15)  *+{\mathcal{O}(\T)}  ="3",
(7,-5) *{\circlearrowright},
{"1" \SelectTips{cm}{} \ar @{->>} "2"},
{"1" \SelectTips{cm}{} \ar @{->>} "3"},
{"3" \SelectTips{cm}{} \ar @{<--} "2"},
\end{xy}
\]
Here,
the horizontal and vertical arrows are the canonical quotient maps.
Thus, we are done.
\end{proof}

Let $\T \ltimes \U$ be the crossed product supergroup scheme of $\T$ and $\U$.
Namely, for any superalgebra $R$,
the group $(\T \ltimes \U)(R)$ is just $\T(R) \times \U(R)$ as a set,
and the group multiplication is given by
\[
(t, u) \, (s, v)
\,\,=\,\,
(ts,\,
s^{-1}usv),
\quad 
s,t \in \T(R),\,\,
u,v \in \U(R).
\]
We define a closed super-subgroup $\B$ of $\G$
as the {\it image} (in the sense of \cite[Part~I, 5.5]{Jan03}) of the multiplication map $\T\ltimes \U \to \G$.
Since $T\cap U$ and $\h\cap \n^-$ are trivial,
the intersection $\T\cap \U$ is also trivial.
Thus, the multiplication map of $\G$ induces an isomorphism
$\T \ltimes \U \cong \B$
of algebraic supergroups.
By dualizing,
we have an isomorphism of Hopf superalgebras
\begin{equation} \label{eq:Bsplit}
\mathcal{O}(\B)
\overset{\cong}{\longrightarrow}
\mathcal{O}(\T \ltimes \U) \, = \, 
\mathcal{O}(\T) \cmdblackltimes \mathcal{O}(\U),
\end{equation}
where $\mathcal{O}(\T) \cmdblackltimes \mathcal{O}(\U)$ denotes the {\it Hopf-crossed product} of 
$\mathcal{O}(\U)$ and $\mathcal{O}(\T)$
in $\SMod$.
Note that, 
$\mathcal{O}(\T) \cmdblackltimes \mathcal{O}(\U)$ is just $\mathcal{O}(\T) \otimes \mathcal{O}(\U)$ as a superalgebra.
The identification $\B=\T \ltimes \U$ implies that 
\begin{equation} \label{eq:B}
B := \Bev = T \ltimes U,
\qquad
\Lie(\B) = \bb^-.
\end{equation}
Thus, $B$ is a Borel subgroup of $G$.
In this sense, we shall call $\B$ a {\it Borel super-subgroup of $\G$}
({\it with respect to $\gamma$}).
The supergroup $\B$ is connected,
and hence the canonical Hopf paring induces
an injection $\mathcal{O}(\B)\hookrightarrow \hy(\B)^*$
of superalgebras, as before.
Similarly, we define $\B^+:=\mathrm{Im}(\T\ltimes \U^+ \to \G)\cong \T\ltimes \U^+$.

\begin{rem} \label{rem:gamma}
Note that, the groups $\U$, $\U^+$, $\B$ and $\B^+$ are depending on
the choice of the homomorphism $\gamma:\mathz\Delta\to \mathr$,
defined in \S\ref{sec:tori,unip}.
In addition,
not all (possible) Borel super-subgroups are conjugate.
\qed
\end{rem}

Since $T \subseteq B$ and $\h \subseteq \bb^-$,
we see that $\T$ is a closed super-subgroup of $\B$.
Then the inclusion map $\T \hookrightarrow \B$ induces
a Hopf superalgebra surjection $\mathcal{O}(\B) \twoheadrightarrow \mathcal{O}(\T)$.

\begin{prop} \label{prp:split}
The morphism $\mathcal{O}(\B) \twoheadrightarrow \mathcal{O}(\T)$ is split epic.
\end{prop}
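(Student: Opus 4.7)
My plan is to construct an explicit Hopf superalgebra section by exploiting the semidirect product decomposition $\B\cong\T\ltimes\U$ given in \eqref{eq:Bsplit}. Under this identification, the closed embedding $\T\hookrightarrow\B$ is $t\mapsto(t,1)$ on $R$-points, and the given surjection $\mathcal{O}(\B)\twoheadrightarrow\mathcal{O}(\T)$ corresponds to the map $\mathcal{O}(\T)\otimes\mathcal{O}(\U)\to\mathcal{O}(\T)$, $f\otimes g\mapsto f\,\varepsilon_\U(g)$, where $\varepsilon_\U$ is the counit of $\mathcal{O}(\U)$.

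The section will come from the natural projection onto the first factor. In any semidirect product, the second factor is normal: from the multiplication rule $(t,u)(s,v)=(ts,s^{-1}usv)$ one computes $(t,1)^{-1}=(t^{-1},1)$ and then $(t,1)(1,v)(t,1)^{-1}=(1,tvt^{-1})\in\U$. Consequently the $R$-functorial assignment
\[
\pi \,:\, \T\ltimes\U \longrightarrow \T, \qquad (t,u)\longmapsto t,
\]
is a morphism of algebraic supergroups: functoriality in $R$ is obvious and $\pi((t,u)(s,v))=ts=\pi(t,u)\,\pi(s,v)$ is immediate. Since the composite of group scheme morphisms $\T\hookrightarrow\T\ltimes\U\xrightarrow{\pi}\T$ equals $\id_\T$, the dual Hopf superalgebra map $\pi^{*}:\mathcal{O}(\T)\to\mathcal{O}(\B)$, which under \eqref{eq:Bsplit} is nothing but $f\mapsto f\otimes 1_{\mathcal{O}(\U)}$, is a section of $\mathcal{O}(\B)\twoheadrightarrow\mathcal{O}(\T)$.

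There is no genuine obstacle here; the whole argument is simply the dualization of the existence of the quotient map from a semidirect product onto its quotient by the normal factor. The only thing to be mildly careful about is that $\pi$ is a morphism of \emph{group} functors (not just a functorial map of sets), which is settled by the one-line check above. I would note as a concluding remark that this splitting realises $\mathcal{O}(\B)$ as $\mathcal{O}(\T)\otimes\mathcal{O}(\U)$ in a way compatible with both the $\mathcal{O}(\T)$-comodule and the $\mathcal{O}(\T)$-algebra structures induced by $\pi^{*}$, a fact that will be needed later when analysing induction from $\T$ to $\B$ and the cohomology groups $H^n(\lambda)$.
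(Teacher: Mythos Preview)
Your proof is correct and follows essentially the same approach as the paper: both construct the section by dualizing the group-scheme projection $\T\ltimes\U\to\T$, $(t,u)\mapsto t$, using the identification \eqref{eq:Bsplit}. Your explicit verification that $\pi$ is a group morphism (via normality of $\U$) is a bit more detailed than the paper's treatment, but the underlying idea is identical.
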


\begin{proof}
We consider the following Hopf superalgebra map
\begin{equation}\label{eq:section}
\spl_{\B}: 
\mathcal{O}(\T) \overset{1\otimes \id}{\longrightarrow} 
\mathcal{O}(\T) \cmdblackltimes \mathcal{O}(\U)
\overset{\cong}{\longrightarrow}
\mathcal{O}(\B),
\end{equation}
where the first arrow is given by $x\mapsto 1\otimes x$ and
the second arrow is the inverse map of \eqref{eq:Bsplit}.
Then one easily sees that 
this $\spl_{\B}$ gives a section of $\mathcal{O}(\B) \twoheadrightarrow \mathcal{O}(\T)$.
\end{proof}

\subsection{Kostant's $\mathz$-Form and PBW Theorem} \label{sec:Kostant}
As a superspace,
we have
\[
\g \,\,= \,\,
(\h_\eve\oplus \bigoplus_{\alpha\in\Delta_\eve} \g_{\eve}^\alpha)
\,\oplus\,
(\h_\odd\oplus \bigoplus_{\delta\in\Delta_\odd} \g_{\odd}^\delta).
\]
Note that, for $\alpha\in\Delta_\eve$, the rank of $\g_\eve^\alpha$ is always $1$.
However, for $\delta\in\Delta_\odd$, 
the rank of $\g_\odd^\delta$ may be greater than $1$,
see \cite[Lemma~9.6]{Ser11}.
For $\epsilon\in\mathz_2$, we set
\[
\ell_{\epsilon} \,\,:=\,\,
\rank(\h_\epsilon).
\]

Set $\g_\mathz:=\Lie(\G_\mathz)$ and $\h_\mathz:=\h\cap \g_\mathz$.
For $X \in (\g_\mathz)_\eve \setminus (\h_\mathz)_\eve$ and $H\in(\h_\mathz)_\eve$,
as elements in $\hy(G_\mathz) \otimes_\mathz \mathq$, we set
\[
X^{(n)} := {X^n}\tens_\mathz \frac{1}{n!},
\qquad
H^{(m)}  := (\prod_{j=0}^{m-1}(H-j))\tens_\mathz\frac{1}{m!},
\]
where $n,m\in\mathz_{\geq0}$.
Here, we set $H^{(0)}:=1$.
As in \cite[Part~II, 1.11]{Jan03},
we choose a $\mathz$-free basis of $(\g_\mathz)_\eve$
\[
\mathcal{B}_\eve:=
\{ X_\alpha \in (\g_\mathz)_\eve^\alpha \mid \alpha\in\Delta_\eve \}
\cup
\{ H_i \in (\h_\mathz)_\eve \mid 1\leq i \leq \ell_\eve\}
\]
so that 
the set of all products of factors of type
$X_\alpha^{(n_\alpha)}$ and $H_i^{(m_i)}$
($n_\alpha,m_i\in\mathz_{\geq0}$, $\alpha\in\Delta_\eve$, $1\leq i\leq \ell_\eve$),
taken in $\hy(G_\mathq)$
with respect to any order on $\mathcal{B}_\eve$ forms 
a $\mathz$-basis of $\hy(G_\mathz)$.

Since $(\g_\mathz)_\odd$ is also $\mathz$-free,
we take a $\mathz$-free basis of $\g_\mathz$ as follows.
\[
\mathcal{B}_\odd:=\{ Y_{(\delta,p)} \in (\g_\mathz)_\odd^\delta \mid 
\delta\in\Delta_\odd,
1\leq p \leq \rank(\g_\odd^\delta) \}
\cup
\{ K_{t} \in (\h_\mathz)_\odd \mid 1\leq t \leq \ell_\odd \}.
\]
We see that $\hy(\G)$ can be identified with $\hy(\G_\mathz)\otimes_\mathz \Bbbk$,
since $\G_\mathz$ is infinitesimally flat (Proposition~\ref{prp:inf-flat}).
As elements of $\hy(\G) = \hy(\G_\mathz)\otimes_\mathz \Bbbk$,
we set
\begin{align*}
X_{\alpha,n} &:= X_{\alpha}^{(n)}\otimes_\mathz 1,
 &
H_{i,m} &:= H_i^{(m)}\otimes_\mathz 1,
\\
Y_{(\delta,p),\epsilon} &:= Y_{(\delta,p)}^{\epsilon}\otimes_\mathz 1,
 &
K_{t,\epsilon} &:= K_t^{\epsilon}\otimes_\mathz 1,
\end{align*}
where $\epsilon = 0$ or $1$.
Then the supercoalgebra isomorphism 
$\hy(\G) \cong \hy(\Gev)\tens \wedge (\g_\odd)$
given in \eqref{eq:phi}
implies the following Poincar\'e-Birkhoff-Witt (PBW) theorem for $\hy(\G)$.

\begin{thm} \label{prp:PBW}
For any total order on the set $\mathcal{B}_\eve\cup\mathcal{B}_\odd$,
the set of all products of factors of type
\[
H_{i,m(i)}, \quad
X_{\alpha,n(\alpha)}, \quad
K_{t,\epsilon(t)},\quad
Y_{(\delta,p),\epsilon(\delta,p)}
\]
($n(\alpha),m(i)\in\mathz_{\geq0}$, $\alpha\in\Delta_\eve$, $1\leq i\leq \ell_\eve$,
$\delta\in\Delta_\odd$, $1\leq p \leq \rank(\g_\odd^\delta)$,
$1\leq t \leq \ell_\odd$
and
$\epsilon(t), \epsilon(\delta,p) \in \{0,1\}$),
taken in $\hy(\G)$
with respect to the order, forms a $\Bbbk$-basis of $\hy(\G)$.
\end{thm}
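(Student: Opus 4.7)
The plan is to deduce the PBW basis theorem by combining the tensor-splitting \eqref{eq:phi} with a straightening argument that bridges a convenient ``even-before-odd'' ordering to an arbitrary total order on $\mathcal{B}_\eve \cup \mathcal{B}_\odd$.

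First, I would exploit that $\phi : \hy(G)\tens\wedge(\g_\odd) \overset{\cong}{\to} \hy(\G)$ is a unit-preserving, left $\hy(G)$-linear $\Bbbk$-module isomorphism. Jantzen's PBW theorem for $\hy(G) = \hy(G_\mathz)\tens_\mathz\Bbbk$ (quoted just above the statement) produces a $\Bbbk$-basis of $\hy(G)$ of ordered products of factors $X_{\alpha,n}$ and $H_{i,m}$ with respect to the restriction of the given order to $\mathcal{B}_\eve$; and $\wedge(\g_\odd)$ admits an obvious $\Bbbk$-basis of ordered products of factors $Y_{(\delta,p),\epsilon}$ and $K_{t,\epsilon}$ with $\epsilon\in\{0,1\}$ relative to the restriction to $\mathcal{B}_\odd$. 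Tensoring these two bases and transporting through $\phi$ proves the theorem for the auxiliary total order in which every element of $\mathcal{B}_\eve$ precedes every element of $\mathcal{B}_\odd$.

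Second, for the given total order $\preceq$, I would argue that the corresponding family of monomials is related to the auxiliary one by an invertible, unitriangular change of basis. Equip $\hy(\G)$ with the divided-power filtration $F_\bullet$ in which $X_{\alpha,n}$ and $H_{i,m}$ have degree $n$ and $m$, and each $Y_{(\delta,p),1}$, $K_{t,1}$ has degree $1$. The supercommutation relations in $\hy(\G)$ then say that transposing two adjacent factors in any monomial replaces it by $\pm 1$ times the swapped monomial (with $-1$ only for odd-odd transpositions) modulo terms of strictly smaller $F$-degree: the even-even defect is the Kostant--Chevalley identity recalled in \cite[Part~II, 1.12]{Jan03}, the odd-odd defect satisfies $YY'+Y'Y=[Y,Y'] \in \g_\eve \subseteq F_1$, and the mixed defect $X_{\alpha,n}Y - YX_{\alpha,n}$ is read off from the adjoint action $X_{\alpha,n}\hits Y \in \g_\odd$ via the left $\hy(G)$-module structure preserved by $\phi$. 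Sorting monomials lexicographically by $F$-degree and by multi-exponent data, the transition matrix from the $\preceq$-monomials to the auxiliary basis becomes upper-triangular with $\pm 1$ diagonal entries, so it is invertible over $\Bbbk$, and the $\preceq$-monomials form a basis.

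The main obstacle is controlling these correction terms over the PID $\Bbbk$ rather than only over a field of characteristic zero, where Remark~\ref{rem:hy=U} would permit a universal enveloping superalgebra shortcut. This integrality rests on Kostant's $\mathz$-form for $\hy(G)$ and on the fact, secured in \cite{MasShi17}, that the $\hy(G)$-module structure on $\g_\odd$ and the splitting $\phi$ are defined over $\mathz$. Granting these ingredients, a double induction on $F$-degree and on the number of $\preceq$-inversions completes the proof.
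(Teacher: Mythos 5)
Your argument rests on exactly the same two pillars the paper invokes: the $\tens$-splitting $\phi:\hy(G)\tens\wedge(\g_\odd)\overset{\cong}{\to}\hy(\G)$ from \eqref{eq:phi}, and Jantzen's even PBW theorem for Kostant's $\mathz$-form of $\hy(G)$. The paper's own ``proof'' is the single sentence preceding the theorem statement, which just asserts that \eqref{eq:phi} implies the PBW basis; it does not spell out why the conclusion holds for an \emph{arbitrary} total order on $\mathcal{B}_\eve\cup\mathcal{B}_\odd$, rather than only for the ``even-before-odd'' order that the tensor decomposition yields directly. Your second paragraph, the filtration-and-straightening step, supplies precisely that missing bridge, and it is needed --- the paper itself uses an interleaved order in Corollary~\ref{prp:inj}, where the factors of $\hy(\U)$ (both even and odd) precede those of $\hy(\B^+)$. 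So your proof is correct and follows the paper's route, but makes explicit a step the paper only gestures at.

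Two small points worth tightening if you write this out. First, the mixed commutator $X_{\alpha,n}Y - Y X_{\alpha,n}$ is not literally the Hopf-algebraic adjoint action $X_{\alpha,n}\hits Y$ (for $n\geq 2$ the coproduct of the divided power makes $\ad(X^{(n)})(Y)=\sum_{i+j=n}(-1)^jX^{(i)}YX^{(j)}$ differ from the naive commutator); what you actually need, and what is true, is the expansion $X^{(n)}_\alpha Y = \sum_{k=0}^n (X^{(k)}_\alpha\hits Y)\, X^{(n-k)}_\alpha$ inside $\hy(\G_\mathz)$, where each $X^{(k)}_\alpha\hits Y\in\g_\odd$ is integral because $\g_\odd$ carries a $G_\mathz$-module structure. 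Second, you should also treat the commutators $H_{i,m}Y - Y H_{i,m}$ (and $K_t$ against $X_{\alpha,n}$, $H_{i,m}$), which follow from $H^{(m)}Y = Y\binom{H+\delta(H)}{m}$ and the usual binomial expansion, again lower-order. Neither affects the validity of the argument, but the straightening step should enumerate all the transposition types.
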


\begin{rem}
To construct Chevalley supergroups over $\mathz$,
it is necessary to prove that
(1) any finite-dimensional simple Lie superalgebra $\mathfrak{s}$ over $\mathc$ 
has a {\it Chevalley basis};
(2) the {\it Kostant's $\mathz$-form} of 
$\UU(\mathfrak{s})$ has a PBW basis, like above.
These were done by Fioresi and Gavarini \cite[Theorems~3.7,~4.7]{FioGav12}.
\end{rem}

Since the $\Bbbk$-valued points of $\G$ is just $\G(\Bbbk)=\Gev(\Bbbk)$,
it is hard to consider 
a geometrical ``denseness''
of $\B\times\B^+$ in $\G$ (or, a ``big cell'' as in \cite[Part~II, 1.9]{Jan03})
directly.
The following is an algebraic interpretation of the denseness,
see also \cite{ParWan91} and \cite{BicRic16}.

\begin{cor} \label{prp:inj}
The following superalgebra map is injective.
\begin{equation}\label{eq:inj}
\mathcal{O}(\G) \longrightarrow 
\mathcal{O}(\G)\otimes \mathcal{O}(\G) \longrightarrow 
\mathcal{O}(\B)\otimes \mathcal{O}(\B^+),
\end{equation}
where the first arrow is the comultiplication map of $\mathcal{O}(\G)$ and 
the second arrow is the tensor product of the canonical quotient maps
$\mathcal{O}(\G) \twoheadrightarrow \mathcal{O}(\B)$ and $\mathcal{O}(\G) \twoheadrightarrow \mathcal{O}(\B^+)$.
\end{cor}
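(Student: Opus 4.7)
The plan is to reduce the statement to surjectivity of the multiplication map on hyperalgebras, and then invoke the PBW basis from Theorem~\ref{prp:PBW}.

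First, I would use duality. Since $\G$ is connected, the canonical Hopf pairing \eqref{eq:Hopf-pairing} induces an injection $\mathcal{O}(\G)\hookrightarrow\hy(\G)^*$. The compatibility of this pairing with the Hopf structures yields, for $u\in\hy(\B)$, $v\in\hy(\B^+)$, and $a\in\mathcal{O}(\G)$,
\[
\langle uv,a\rangle \;=\; \sum_a (-1)^{|v||a_{(1)}|}\,\langle u,a_{(1)}\rangle\,\langle v,a_{(2)}\rangle.
\]
Since elements of $\hy(\B)$ (resp.\ of $\hy(\B^+)$) annihilate the kernel of $\mathcal{O}(\G)\twoheadrightarrow\mathcal{O}(\B)$ (resp.\ of $\mathcal{O}(\G)\twoheadrightarrow\mathcal{O}(\B^+)$), the right-hand side depends only on the image of $\Deltaa(a)$ in $\mathcal{O}(\B)\otimes\mathcal{O}(\B^+)$. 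Hence, if $a$ lies in the kernel of \eqref{eq:inj}, then $\langle w,a\rangle=0$ for every $w$ in the image of the multiplication map $m:\hy(\B)\otimes\hy(\B^+)\to\hy(\G)$. Thus it suffices to show that $m$ is surjective: given that, $a$ is annihilated by all of $\hy(\G)$, hence $a=0$ by the injectivity of $\mathcal{O}(\G)\hookrightarrow\hy(\G)^*$.

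Next, I would establish the surjectivity of $m$ by choosing a convenient PBW ordering. Pick any total order on $\mathcal{B}_\eve\cup\mathcal{B}_\odd$ in which the factors indexed by $\Delta^-_\eve\cup\Delta^-_\odd$ come first, then the Cartan factors $H_i,K_t$, then the factors indexed by $\Delta^+_\eve\cup\Delta^+_\odd$. Applying Theorem~\ref{prp:PBW} (or its direct analogue, which rests only on the $\otimes$-splitting \eqref{eq:phi} and is available for any $\otimes$-split supergroup) to $\B\cong\T\ltimes\U$ and to $\U^+$ gives PBW bases of $\hy(\B)$ and $\hy(\U^+)$ consisting respectively of the ordered monomials in the ``first-two-blocks'' and in the ``third block''; the closed embeddings $\B,\U^+\hookrightarrow\G$ identify these with the corresponding sub-monomials of the PBW basis of $\hy(\G)$ furnished by Theorem~\ref{prp:PBW}. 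Consequently, every basis monomial of $\hy(\G)$ factors as the product of a basis element of $\hy(\B)$ with one of $\hy(\U^+)\subseteq \hy(\B^+)$, proving surjectivity of $m$.

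The principal delicacy is bookkeeping around the Cartan: $\hy(\T)$ sits inside both $\hy(\B)$ and $\hy(\B^+)$, so one must commit to placing the Cartan factors on exactly one side (above, on the $\hy(\B)$-side) so as to avoid redundancy. Once this convention is fixed, the remaining task is to verify that the PBW bases of $\hy(\B)$ and $\hy(\U^+)$ are carried to the expected ordered sub-monomials of the PBW basis of $\hy(\G)$; this reduces to tracing the $\otimes$-splittings \eqref{eq:psi}--\eqref{eq:phi} together with their analogues for $\B$ and $\U^+$, and respecting the divided-power and odd-root conventions, which is routine but requires care.
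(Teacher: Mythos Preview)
Your proposal is correct and follows essentially the same approach as the paper: reduce to surjectivity (in the paper, bijectivity) of the multiplication map on hyperalgebras via the PBW theorem, and then use the injection $\mathcal{O}(\G)\hookrightarrow\hy(\G)^*$ coming from connectedness. The only cosmetic difference is that the paper places the Cartan part on the other side, working with $\hy(\U)\otimes\hy(\B^+)\to\hy(\G)$ (which is a bijection since $\n^-\oplus\bb^+=\g$) and packaging the duality step as a commutative diagram rather than through the pairing formula you wrote; your choice $\hy(\B)\cdot\hy(\U^+)$ works equally well.
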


\begin{proof}
As in the proof Proposition~\ref{prp:split},
we also get an injection $\mathcal{O}(\U) \hookrightarrow \mathcal{O}(\B)$.
Thus, to prove the claim, it is enough to see that
the map
$\mathcal{O}(\G) \to \mathcal{O}(\U)\otimes \mathcal{O}(\B^+)$ is injective.

The multiplication map $\mu:\U\times \B^+ \to \G$ induces
a morphism $\hy(\mu):\hy(\U)\otimes \hy(\B^+) \to \hy(\G)$ of supercoalgebras
which is indeed bijective, by $\n^-\oplus \bb^+=\g$ and Theorem~\ref{prp:PBW}.
Then the $\Bbbk$-linear dual $\hy(\mu)^*$ is an isomorphism of superalgebras.
Since $\hy(\U)$ and $\hy(\B^+)$ are both $\Bbbk$-free,
the canonical map 
$\hy(\U)^*\otimes \hy(\B^+)^* \to \big(\hy(\U)\otimes \hy(\B^+)\big)^*$ is injective.
Therefore, we get the following commutative diagram of superalgebras:
\[
\begin{xy}
(-40,7)   *++{\hy(\G)^*}  ="1",
(0,7)  *++{\big(\hy(\U)\tens \hy(\B^+)\big)^*}    ="2",
(45,7)  *++{\hy(\U)^*\tens \hy(\B^+)^*}    ="22",
(-40,-7) *++{\mathcal{O}(\G)}          ="3",
(45,-7)*++{\mathcal{O}(\U)\tens \mathcal{O}(\B^+).}    ="4",
(0,0)*{\circlearrowleft},
{"1" \SelectTips{cm}{} \ar @{->}_{\cong 
\phantom{12345678}}^{\hy(\mu)^* \phantom{1234567}} "2"},
{"3" \SelectTips{cm}{} \ar @{->} "1"},
{"4" \SelectTips{cm}{} \ar @{->} "22"},
{"22" \SelectTips{cm}{} \ar @{_(->} "2"},
{"3" \SelectTips{cm}{} \ar @{->}_{} "4"}
\end{xy}
\]
The vertical canonical maps are injection, since $\G$, $\U$ and $\B^+$ are connected.
Therefore, the lower horizontal arrow is also injective.
\end{proof}

\section{Borel-Weil Theorem for $\G$} 
\label{sec:simples}

Throughout the rest of the paper,
we assume that $\Bbbk$ is a field of characteristic different from 2.
All supergroups $\G$, $\B$, $\U$, $\T$, etc. are defined over the field $\Bbbk$.
In this section, we will construct all simple $\G$-supermodules,
which extends Serganova's construction \cite[\S9]{Ser11} to arbitrary characteristic.
The main idea is based on Brundan and Kleshchev's argument \cite[\S6]{BruKle03},
see also Parshall and Wang \cite{ParWan91},
Bichon and Riche \cite{BicRic16}
for the non-super situation.

\subsection{Simple $\T$-Supermodules}\label{sec:irrT}
We will construct 
all simple supermodules of the super-torus $\T$ of $\G$.
In the following, we freely use the notations used in Appendix~\ref{sec:clif}.

Recall that, $\ell_\epsilon$ denotes the dimension of
$\h_\epsilon$ for $\epsilon\in\mathz_2$.
By Theorem~\ref{prp:PBW},
the super-hyperalgebra $\hy(\T)$ of $\T$ has a $\Bbbk$-basis
\begin{equation} \label{eq:hy(T)-basis}
\{
\prod_{i=1}^{\ell_\eve} H_{i,m(i)} \, 
\prod_{t=1}^{\ell_\odd} K_{t,\epsilon(t)}
\mid
m(i)\in \mathz_{\geq0},\,\,\epsilon(t) \in \{0,1\}
\}.
\end{equation}
In the following, we fix $\lambda\in \Lambda$
and write $\lambda(H):=\langle H, \lambda \rangle$ 
for $H\in \h_{\eve}\,(\subseteq \hy(T))$.
Since $H_i\in(\h_\mathz)_{\eve}$, we see that $\lambda(H_i)\in\mathz$.
We let $\hy(\T)^\lambda$ denote the 
quotient superalgebra of 
the super-hyperalgebra $\hy(\T)$ of $\T$
by the two-sided super-ideal of $\hy(\T)$ generated by all
$H_{i,m} - \binom{\lambda(H_i)}{m}$,
where $1\leq i\leq \ell_\eve$ and $m\geq 0$.
Here, we used
\[
\binom{c}{m}
\,\,:=\,\,
\frac{1}{m!}
\prod_{j=0}^{m-1}
(c-j),
\quad
\binom{c}{0}
\,\,:=\,\,
1
\]
for $m\geq1$ and $c\in\mathz$.
Then we get the following.
\[
\dim \hy(\T)^\lambda \,\, = \,\, 2^{\ell_\odd}.
\]

For $x,y\in\h_\odd$,
we see that $[x,y] \in \h_\eve$,
and hence 
we can define a bilinear map $b^\lambda : \h_\odd\times \h_\odd \to \Bbbk$ as follows.
\[
b^\lambda(x,y) \,\, := \,\, \lambda([x,y]),
\]
where $x,y\in\h_\odd$.
It is easy to see that $(\h_\odd,b^\lambda)$ forms a quadratic space over $\Bbbk$.
Thus, we get the Clifford superalgebra 
$\C(\h_\odd, b^\lambda) := T(\h_\odd) / I(\h_\odd,b^\lambda)$ over $\Bbbk$.
We may regard $\h_\odd$ as a subspace of $\hy(\T)$,
since $\h$ coincides with the set of all primitive elements in $\hy(\T)$.
Then we have the following map.
\begin{equation} \label{eq:Ttohy^}
T(\h_\odd) \longrightarrow \hy(\T) \longrightarrow \hy(\T)^\lambda,
\end{equation}
where $T(\h_\odd)$ is the tensor algebra of $\h_\odd$.

\begin{lemm} \label{prp:C=hy^}
The map above induces an isomorphism 
$\C(\h_\odd,b^\lambda) \cong \hy(\T)^\lambda$ of superalgebras.
\end{lemm}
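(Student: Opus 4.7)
The plan is to verify the defining relation of the Clifford superalgebra holds in $\hy(\T)^\lambda$, obtain a surjective superalgebra homomorphism by the universal property, and conclude by a dimension count.

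First, I would establish well-definedness. Since $\h = P(\hy(\T))$, for any $x, y \in \h_\odd \subseteq \hy(\T)$ the associative super-commutator $xy + yx$ coincides with the Lie superbracket $[x,y]$, which lies in $\h_\eve$ because $\h$ is a Lie super-subalgebra. The defining ideal of $\hy(\T)^\lambda$ contains in particular $H_{i,1} - \lambda(H_i) = H_i - \lambda(H_i)$ for $1 \leq i \leq \ell_\eve$, so every $H \in \h_\eve$ maps to the scalar $\lambda(H) \in \Bbbk$ in the quotient. Hence in $\hy(\T)^\lambda$ we have $xy + yx = \lambda([x,y]) = b^\lambda(x,y) \cdot 1$, which is precisely the Clifford relation. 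By the universal property of $\C(\h_\odd, b^\lambda)$ recalled in Appendix~\ref{sec:clif}, the map \eqref{eq:Ttohy^} descends to a superalgebra homomorphism $\phi : \C(\h_\odd, b^\lambda) \to \hy(\T)^\lambda$.

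Next, I would prove $\phi$ is surjective. By the PBW basis \eqref{eq:hy(T)-basis}, the algebra $\hy(\T)$ is spanned by monomials $\prod_i H_{i,m(i)} \prod_t K_{t,\epsilon(t)}$. Because $\h$ is the zero-weight subspace of the adjoint $T$-action on $\Lie(\G)$, every $K_t \in \h_\odd$ is $\hy(T)$-invariant; equivalently, the (commutative) subalgebra $\hy(T)$ lies in the centralizer of $\h$ inside $\hy(\T)$, so each divided power $H_{i,m}$ commutes with each $K_t$. Therefore in $\hy(\T)^\lambda$ each basis monomial reduces to $\prod_i \binom{\lambda(H_i)}{m(i)} \cdot \prod_t K_{t,\epsilon(t)}$, namely a scalar times a monomial in $\h_\odd$. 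Thus $\hy(\T)^\lambda$ is spanned by the $2^{\ell_\odd}$ products $\prod_t K_{t,\epsilon(t)}$ with $\epsilon(t)\in\{0,1\}$, all of which lie in the image of $\phi$.

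Finally, a dimension count gives injectivity. The standard structure theory of Clifford superalgebras (Appendix~\ref{sec:clif}) yields $\dim_\Bbbk \C(\h_\odd, b^\lambda) = 2^{\ell_\odd}$, while the spanning argument above gives $\dim_\Bbbk \hy(\T)^\lambda \leq 2^{\ell_\odd}$. Surjectivity of $\phi$ then forces equality on both sides and shows $\phi$ is an isomorphism. The one point that requires care is the commutation of the divided powers $H_{i,m}$ with the odd generators $K_t$, which ultimately rests on the super-torus property that $\h$ sits in the zero-weight space of the adjoint action of $T$; everything else is a routine unwinding of the defining relations and the universal property.
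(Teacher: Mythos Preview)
Your argument is correct and follows essentially the same route as the paper: verify the Clifford relation to obtain the induced map, check surjectivity, and finish by comparing dimensions. The paper simply asserts that the map \eqref{eq:Ttohy^} is surjective with kernel containing $I(\h_\odd,b^\lambda)$ and then quotes $\dim \C(\h_\odd,b^\lambda)=2^{\ell_\odd}$ together with the equality $\dim\hy(\T)^\lambda=2^{\ell_\odd}$ stated just before the lemma; you have merely unpacked these ``easy to see'' steps, including the commutation of $H_{i,m}$ with $K_t$ needed for the surjectivity/spanning argument.
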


\begin{proof}
It is easy to see that \eqref{eq:Ttohy^} is surjective
and the kernel contains the ideal $I(\h_\odd,b^\lambda)$ of $T(\h_\odd)$.
Thus, \eqref{eq:Ttohy^} induces a surjection 
$\C(\h_\odd,b^\lambda) \to \hy(\T)^\lambda$ of superalgebras.
On the other hand,
it is known that the dimension of $\C(\h_\odd,b^\lambda)$ is 
$2^{\ell_\odd}$.
Thus, we are done.
\end{proof}

Take a non-degenerate subspace $(\h_\odd)_s$ of $\h_\odd$
so that $\h_\odd = \rad(b^\lambda) \perp (\h_\odd)_s$,
and set 
\begin{equation}
d_{\lambda} \,\, := \,\, \ell_\odd - \dim \big(\rad(b^\lambda)\big).
\end{equation}
We choose an orthogonal basis $\{x_1,\dots, x_{d_\lambda}\}$ of $(\h_\odd)_s$
and we let 
\begin{equation}
\delta_{\lambda} \,\,:=\,\, 
(-1)^{d_{\lambda}(d_{\lambda}+1)/2} \,
\lambda([x_1,x_1]) \cdots \lambda([x_{d_\lambda},x_{d_\lambda}]),
\end{equation}
the signed determinant of $(\h_\odd)_s$,
see \eqref{eq:delta}.
For simplicity, we let $\delta_\lambda=0$ if $b^\lambda=0$.
By Lemma~\ref{prp:C=hy^} and Proposition~\ref{prp:Clif-simples},
we have the following.

\begin{prop} \label{prp:u(lambda)}
The superalgebra $\hy(\T)^\lambda$ has a unique simple supermodule 
$\mathfrak{u}(\lambda)$ up to isomorphism and parity change.
If (1) $\delta_\lambda=0$ or 
(2) $d_{\lambda}$ is even and $\delta_{\lambda} \in (\Bbbk^\times)^2$,
then $\Pi \mathfrak{u}(\lambda) \not= \mathfrak{u}(\lambda)$.
Otherwise, $\Pi \mathfrak{u}(\lambda) = \mathfrak{u}(\lambda)$.
\end{prop}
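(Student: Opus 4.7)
The plan is to reduce the statement to Proposition~\ref{prp:Clif-simples} from Appendix~\ref{sec:clif} via the isomorphism of Lemma~\ref{prp:C=hy^}. First, by Lemma~\ref{prp:C=hy^} we have $\hy(\T)^\lambda \cong \C(\h_\odd, b^\lambda)$ as superalgebras, so simple $\hy(\T)^\lambda$-supermodules, together with their behavior under the parity change $\Pi$, correspond bijectively to simple $\C(\h_\odd, b^\lambda)$-supermodules.

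Next, I would exploit the orthogonal decomposition $\h_\odd = \rad(b^\lambda) \perp (\h_\odd)_s$, which induces a super-tensor splitting
\[
\C(\h_\odd, b^\lambda) \,\cong\, \wedge\bigl(\rad(b^\lambda)\bigr) \tens \C\bigl((\h_\odd)_s,\, b^\lambda|_{(\h_\odd)_s}\bigr).
\]
The first factor is a local exterior superalgebra whose unique simple supermodule is the $1$-dimensional trivial one; its parity shift is never isomorphic to itself, so this simple is of type $\MM$. The second factor is a non-degenerate Clifford superalgebra of dimension $d_\lambda$ with signed determinant $\delta_\lambda \in \Bbbk^\times$. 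Since simples of a super-tensor product of finite-dimensional superalgebras are super-tensor products of simples of the factors, uniqueness of $\mathfrak{u}(\lambda)$ up to isomorphism and parity change reduces to uniqueness for each factor, which in turn comes from Proposition~\ref{prp:Clif-simples}.

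Finally, I would invoke Proposition~\ref{prp:Clif-simples} for the non-degenerate factor: its unique simple is of type $\MM$ precisely when $d_\lambda$ is even and $\delta_\lambda \in (\Bbbk^\times)^2$, and of type $\QQ$ otherwise (i.e.\ $d_\lambda$ odd, or $d_\lambda$ even with $\delta_\lambda \notin (\Bbbk^\times)^2$). Since super-tensoring a type-$\MM$ simple with a type-$\MM$ simple remains type $\MM$, while super-tensoring with a type-$\QQ$ simple yields type $\QQ$, the type of $\mathfrak{u}(\lambda)$ is governed entirely by the non-degenerate factor; case (1) $\delta_\lambda = 0$ corresponds to $b^\lambda = 0$, so only the exterior factor is present, yielding type $\MM$. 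This gives precisely the stated trichotomy. The main obstacle is the Clifford-superalgebra classification itself, which is handled in Appendix~\ref{sec:clif} via the standard matrix-superalgebra analysis; the reduction here is otherwise formal.
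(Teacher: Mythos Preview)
Your argument is correct and lands in the same place as the paper, but you are doing more work than necessary. The paper's proof is a single invocation of Lemma~\ref{prp:C=hy^} followed by Proposition~\ref{prp:Clif-simples}: the latter is already stated for an \emph{arbitrary} (possibly degenerate) quadratic space, and its proof handles the radical internally via Lemma~\ref{prp:Cliff-Jac} (the Jacobson radical of $\C(V,b)$ is the ideal generated by $\rad(b)$, with quotient $\C(V_s)$). Your explicit tensor splitting $\C(\h_\odd,b^\lambda)\cong \wedge(\rad(b^\lambda))\tens \C((\h_\odd)_s)$ is therefore a re-derivation of that reduction rather than a different route.

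One point to tighten: the assertion that ``simples of a super-tensor product of finite-dimensional superalgebras are super-tensor products of simples of the factors'' is false in general (it fails already over non-closed fields, and in the super setting a tensor of two type-$\QQ$ simples typically splits). What makes your argument go through here is the very special shape of the first factor: $\wedge(\rad(b^\lambda))$ is local with nilpotent augmentation ideal and one-dimensional trivial simple, so the Jacobson radical of the tensor product is $\wedge^+(\rad(b^\lambda))\tens \C((\h_\odd)_s)$ and the semisimple quotient is just $\C((\h_\odd)_s)$. State it that way (or simply cite Lemma~\ref{prp:Cliff-Jac}) rather than appealing to a general tensor-product principle.
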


For a left $\hy(\T)^\lambda$-supermodule $M$,
we may regard $M$ as a left $\hy(\T)$-supermodule via the canonical quotient map
$\hy(\T)\to \hy(\T)^\lambda$.
It is easy to see that $M$ is a left $\hy(\T)$-$T$-supermodule.
Since $\uu(\lambda)$ is finite-dimensional,
it is obviously locally finite as a left $\hy(\T)$-$T$-supermodule.

\begin{lemm} \label{prp:ht-T}
Let $L$ be a locally finite left $\hy(\T)$-$T$-supermodule.
If $L$ is simple, then there exists $\lambda\in \Lambda$ such that 
$L$ is isomorphic to $\uu(\lambda)$ or $\Pi \uu(\lambda)$.
\end{lemm}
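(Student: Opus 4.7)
The plan is to pick a $T$-weight $\lambda$ occurring in $L$, argue that the corresponding weight superspace exhausts $L$, and then recognise $L$ as a simple supermodule over the finite-dimensional quotient $\hy(\T)^\lambda$, whereupon Proposition~\ref{prp:u(lambda)} closes the argument.

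Simplicity of $L$ together with local finiteness first forces $L$ to be finite-dimensional: any non-zero $v\in L$ generates $L$ over $\hy(\T)$ by simplicity, while $\hy(\T)\cdot v$ is finite-dimensional by local finiteness. Since $T$ is diagonalizable and acts compatibly with the $\mathz_2$-grading, we have a (finite) decomposition into homogeneous weight superspaces
\[
L \,=\, \bigoplus_{\mu\in\Lambda} L^\mu,
\]
and we fix $\lambda\in\Lambda$ with $L^\lambda\neq 0$. The key observation is that $\h=\g^{\alpha=0}$ is the zero weight space of $\g$ under the adjoint $T$-action, so every basis element of $\hy(\T)$ listed in \eqref{eq:hy(T)-basis}---a product of divided powers $H_{i,m(i)}$ and odd generators $K_{t,\epsilon(t)}$ all coming from $\h$---has adjoint $T$-weight zero and therefore preserves the weight decomposition of $L$. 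Hence $L^\lambda$ is both $T$-stable and $\hy(\T)$-stable, so it is a non-zero $\hy(\T)$-$T$-super-submodule of $L$, forcing $L=L^\lambda$ by simplicity.

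With $L=L^\lambda$, the $\hy(\T)$-action on $L$ factors through $\hy(\T)^\lambda$: on a $\lambda$-weight vector the divided power $H_{i,m}$ acts as the scalar $\binom{\lambda(H_i)}{m}$, which is precisely the relation imposed in $\hy(\T)^\lambda$. Any $\hy(\T)^\lambda$-super-submodule of $L$ is automatically $\hy(\T)$-stable and, because every vector carries weight $\lambda$, automatically $T$-stable; hence $L$ remains simple over $\hy(\T)^\lambda$. Proposition~\ref{prp:u(lambda)} then identifies $L$ with $\uu(\lambda)$ or $\Pi\uu(\lambda)$. The only point requiring care is that $L^\lambda$ respects the $\mathz_2$-grading, which holds because the $T$-action is even and thus preserves parity on each weight space.
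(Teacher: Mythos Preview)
Your proof is correct and follows essentially the same route as the paper's: pick a weight $\lambda$ with $L^\lambda\neq 0$, recognise $L^\lambda$ as a $\hy(\T)^\lambda$-supermodule, and invoke Proposition~\ref{prp:u(lambda)} together with simplicity of $L$. Your version is simply more explicit than the paper's about why $\hy(\T)$ preserves each weight space (hence why $L=L^\lambda$), whereas the paper absorbs this into the single clause ``By definition, $L^\lambda$ is a $\hy(\T)^\lambda$-supermodule'' and then concludes $L=\uu(\lambda)$ directly from the inclusion $\uu(\lambda)\subseteq L^\lambda\subseteq L$.
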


\begin{proof}
Since $L$ is non-zero,
there exists $\lambda\in \Lambda$ such that 
the $\lambda$-weight space $L^\lambda$ of $L$ is non-zero.
By definition, $L^\lambda$ is a $\hy(\T)^\lambda$-supermodule.
Hence by Proposition~\ref{prp:u(lambda)},
$L^\lambda$ contains $\uu(\lambda)$ or $\Pi \uu(\lambda)$, say $\uu(\lambda)$.
The simplicity assumption on $L$ implies $L=\uu(\lambda)$.
\end{proof}

By Theorem~\ref{prp:MS5.8} for $\T$,
we see that 
the category of locally finite left $\hy(\T)$-$T$-supermodules
is equivalent to 
the category of $\T$-supermodules.
Thus, we may regard $\uu(\lambda)$ as a $\T$-supermodule,
and hence we get the following map.
\[
\Lambda = \X(T) \longrightarrow \Simple_\Pi(\T);
\quad 
\lambda \longmapsto \uu(\lambda).
\]

\begin{prop} \label{prp:T-simple}
The above map is bijective.
Moreover, $\uu(\lambda)$ is of type $\mathtt{M}$
if and only if (1) $\delta_\lambda=0$ or 
(2) $d_\lambda$ is even and $\delta_\lambda\in(\Bbbk^\times)^2$.
\end{prop}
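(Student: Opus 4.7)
The plan is to combine Proposition~\ref{prp:u(lambda)} (classification of simple $\hy(\T)^\lambda$-supermodules via Clifford algebras) with the equivalence in Theorem~\ref{prp:MS5.8} applied to the super-torus $\T$, treating the three assertions (well-definedness, bijectivity, type) in turn.

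First, I would verify that $\uu(\lambda)$, viewed as a $\T$-supermodule, is simple. By construction $\uu(\lambda)$ is a simple $\hy(\T)^\lambda$-supermodule, and pulling back along $\hy(\T)\twoheadrightarrow \hy(\T)^\lambda$ gives a $\hy(\T)$-supermodule structure. The key observation is that the relations $H_{i,m}\mapsto \binom{\lambda(H_i)}{m}$ force $\uu(\lambda)$ to live entirely in the $T$-weight $\lambda$, so it underlies a locally finite $\hy(\T)$-$T$-supermodule in the sense of Theorem~\ref{prp:MS5.8}. Moreover, any sub-$\hy(\T)$-supermodule of $\uu(\lambda)$ is automatically a sub-$\hy(\T)^\lambda$-supermodule, since all of $\uu(\lambda)$ is of weight $\lambda$. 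Hence simplicity as a $\hy(\T)^\lambda$-supermodule transfers to simplicity as a locally finite $\hy(\T)$-$T$-supermodule, and then, via the equivalence of categories of Theorem~\ref{prp:MS5.8}, to simplicity as a $\T$-supermodule. Thus $\lambda\mapsto \uu(\lambda)$ is a well-defined map $\Lambda \to \Simple_\Pi(\T)$.

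For surjectivity I would take any $L\in\Simple(\T)$ and use the equivalence to view it as a simple locally finite $\hy(\T)$-$T$-supermodule, which is $\Bbbk$-free because it is a sum of weight spaces. Lemma~\ref{prp:ht-T} then produces $\lambda\in\Lambda$ with $L\cong \uu(\lambda)$ or $L\cong \Pi\uu(\lambda)$, so the class of $L$ in $\Simple_\Pi(\T)$ is hit by $\lambda$. For injectivity, note that every nonzero element of $\uu(\lambda)$ has $T$-weight $\lambda$ (and similarly $\Pi\uu(\mu)$ is concentrated in weight $\mu$, since $\Pi$ does not change the $T$-action), so an isomorphism $\uu(\lambda)\cong \uu(\mu)$ or $\uu(\lambda)\cong \Pi\uu(\mu)$ of $\T$-supermodules immediately forces $\lambda=\mu$.

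Finally, $\uu(\lambda)$ is of type $\MM$ exactly when $\Pi\uu(\lambda)\not\cong \uu(\lambda)$ in $\Simple(\T)$. Because $\Pi$ commutes with the equivalence of Theorem~\ref{prp:MS5.8} (it only swaps parities of the underlying superspace, leaving both the $\hy(\T)$-action and the $T$-action intact), testing the isomorphism at the level of $\T$-supermodules is the same as testing it at the level of $\hy(\T)^\lambda$-supermodules. Proposition~\ref{prp:u(lambda)} then says that $\Pi\uu(\lambda)\not\cong \uu(\lambda)$ precisely when $\delta_\lambda=0$, or when $d_\lambda$ is even and $\delta_\lambda\in(\Bbbk^\times)^2$, which is the desired criterion for type $\MM$. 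The only place that requires a moment of care is making sure the equivalence of Theorem~\ref{prp:MS5.8} preserves both simplicity and the $\Pi$-functor; once that is observed, all three claims reduce to facts about the Clifford superalgebra $\C(\h_\odd,b^\lambda)\cong \hy(\T)^\lambda$ recorded in Appendix~\ref{sec:clif} and Proposition~\ref{prp:u(lambda)}.
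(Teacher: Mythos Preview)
Your proposal is correct and follows exactly the approach the paper takes: the paper states Proposition~\ref{prp:T-simple} without a separate proof, treating it as an immediate consequence of the category equivalence of Theorem~\ref{prp:MS5.8} applied to $\T$, together with Lemma~\ref{prp:ht-T} (surjectivity) and Proposition~\ref{prp:u(lambda)} (the type criterion), with injectivity coming from the obvious weight comparison. Your write-up simply makes these implicit steps explicit.
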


\begin{rem}
As a $\T$-supermodule, this $\uu(\lambda)$ is isomorphic to 
some copies of $\Bbbk^\lambda$ up to parity change,
where $\Bbbk^\lambda$ is the one-dimensional purely even 
left $\hy(T)$-supermodule of weight $\lambda$.
If $\h=\h_\eve$, then we have $\hy(\T)^\lambda=\Bbbk$,
and hence $\uu(\lambda)$ is just $\Bbbk^\lambda$ or $\Pi\, \Bbbk^\lambda$.
\qed
\end{rem}

By Proposition~\ref{prp:dim(u)},
we have the following.
\begin{prop}
If $\Bbbk$ is algebraically closed,
then 
the dimension of $\uu(\lambda)$
is given by $2^{\lfloor (d_\lambda+1)/2 \rfloor}$.
\end{prop}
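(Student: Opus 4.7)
The plan is to reduce the calculation to the unique simple supermodule of the Clifford superalgebra $\C(\h_\odd, b^\lambda)$ via Lemma~\ref{prp:C=hy^}, which identifies $\hy(\T)^\lambda$ with that Clifford superalgebra. Once this reduction is made, the orthogonal decomposition $\h_\odd = \rad(b^\lambda) \perp (\h_\odd)_s$ (used already in the definition of $d_\lambda$ and $\delta_\lambda$) translates into a factorization of the Clifford superalgebra that lets us strip off the degenerate part harmlessly.

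More precisely, I would first establish the superalgebra isomorphism
\[
\C(\h_\odd, b^\lambda) \,\cong\, \wedge(\rad b^\lambda) \,\tens\, \C\bigl((\h_\odd)_s,\, b^\lambda|_{(\h_\odd)_s}\bigr),
\]
the tensor product being taken in $\SMod$. Indeed, any element of $\rad(b^\lambda)$ squares to $0$ in the Clifford superalgebra and super-anticommutes with every odd vector (since $b^\lambda$ vanishes on it against all of $\h_\odd$), so the subalgebra it generates is an exterior algebra that super-commutes with the Clifford subalgebra generated by $(\h_\odd)_s$. A dimension count ($2^{\ell_\odd - d_\lambda}\cdot 2^{d_\lambda} = 2^{\ell_\odd}$) confirms the isomorphism.

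Next, since $\wedge(\rad b^\lambda)$ is a local supercommutative superalgebra whose augmentation ideal is nilpotent, its unique simple supermodule (up to parity) is the trivial one-dimensional module $\Bbbk$. It follows that every simple supermodule of the tensor product has the form $\Bbbk \tens \uu_s$, where $\uu_s$ is a simple supermodule of the non-degenerate Clifford superalgebra $\C((\h_\odd)_s, b^\lambda|_{(\h_\odd)_s})$; hence $\dim \uu(\lambda) = \dim \uu_s$.

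Finally I would invoke the classical structure theorem for non-degenerate Clifford superalgebras over an algebraically closed field of characteristic $\neq 2$ (stated in Appendix~\ref{sec:clif}): when $d_\lambda = 2k$ is even, $\C((\h_\odd)_s, b^\lambda|_{(\h_\odd)_s}) \cong \Mat(2^{k-1}|2^{k-1})$ and the unique simple supermodule has dimension $2^k$; when $d_\lambda = 2k+1$ is odd, the superalgebra is of queer type, isomorphic to $\Q(2^k)$, whose unique simple supermodule has dimension $2^{k+1}$. In both cases this dimension is $2^{\lfloor (d_\lambda+1)/2 \rfloor}$, as claimed. The only step requiring real care is the splitting of the Clifford superalgebra across the radical — once that is in place, everything reduces to the classification already codified in the Appendix, which is where algebraic closedness is used to bring the non-degenerate form into standard shape.
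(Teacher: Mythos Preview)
Your argument is correct and is essentially the paper's own proof unpacked: the paper simply cites Proposition~\ref{prp:dim(u)}, whose proof reduces to the non-degenerate part via Lemma~\ref{prp:Cliff-Jac} (which is exactly your tensor splitting $\C(\h_\odd,b^\lambda)\cong\wedge(\rad b^\lambda)\tens\C((\h_\odd)_s)$ reread as a Jacobson-radical statement) and then reads off the dimension from the structure theory. Two small points to tidy: your description $\C\cong\Mat(2^{k-1}\,|\,2^{k-1})$ breaks down at $d_\lambda=0$ (handle that trivial case separately), and the odd ``queer type'' identification you invoke is not actually \emph{stated} in Appendix~\ref{sec:clif}---the paper instead passes through $A\rtimes\Bbbk\mathz_2\cong\C(V')$ via Lemma~\ref{prp:Clif-lem} and the equivalence \eqref{eq:SMod->Mod}, so either cite Wall for the queer structure or follow that route.
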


\begin{rem} \label{rem:Serganova}
In \cite[\S9.2]{Ser11}, 
Serganova constructed
simple $\h$-supermodules $C_\lambda$ for each $\lambda\in \Lambda$,
when the base field is algebraically closed of characteristic zero.
Assume that our base field $\Bbbk$ is algebraically closed
(not necessarily $\charr(\Bbbk)=0$).
In the following,
we shall generalize her construction (in the supergroup level)
to our $\Bbbk$,
and show that our $\uu(\lambda)$ is isomorphic to her $C_\lambda$
up to parity change, when $\charr(\Bbbk)=0$.

Fix a maximal totally isotropic subspace 
$\h^\dagger_\odd$ of $(\h_\odd,b^\lambda)$.
We set
\[
\h^\dagger \,\,:=\,\, \h_\eve \oplus \h_\odd^\dagger.
\]
This forms a Lie super-subalgebra of $\h$.
It is obvious that the pair $(T,\h^\dagger)$ is a sub-pair of 
the pair $(T,\h)$,
and hence we get a closed super-subgroup
$\T^\dagger$ of $\T$.
In particular, $\hy(\T^\dagger)$
is isomorphic to $\hy(T) \otimes \wedge(\h_\odd^\dagger)$
as supercoalgebras.

We regard the one-dimensional purely even left $\hy(T)$-supermodule $\Bbbk^\lambda$ as
a left $\hy(\T^\dagger)$-supermodule
by letting $K.\Bbbk^\lambda=0$ for any $K\in\h_\odd^\dagger$.
Set
\begin{equation}
\coind_{\T^\dagger}^{\T}(\lambda)
\,\, := \,\,
\hy(\T) \otimes_{\hy(\T^\dagger)} \Bbbk^\lambda.
\end{equation}
By \eqref{eq:hy(T)-basis}, we see that
$\dim (\coind_{\T^\dagger}^\T(\lambda)) =
2^{\ell_\odd-\dim(\h_\odd^\dagger)}$.
Since the radical $\rad(b^\lambda)$ is contained in $\h_\odd^\dagger$,
one sees that $\coind_{\T^\dagger}^{\T}(\lambda)$ is a $\hy(\T)$-$T$-supermodule,
and hence it is a $\T$-supermodule.
Thus, $\coind_{\T^\dagger}^\T(\lambda)$ 
must contain $\uu(\lambda)$ or $\Pi\uu(\lambda)$, 
by Proposition~\ref{prp:T-simple}.
Then by \eqref{eq:dim(u)},
we conclude that
$\coind_{\T^\dagger}^\T(\lambda)$ is isomorphic to
$\uu(\lambda)$ up to parity change.
If $\charr(\Bbbk)=0$, then 
our $\coind_{\T^\dagger}^\T(\lambda)$ coincides with Serganova's $C_\lambda$
by Remark~\ref{rem:hy=U}.
\qed
\end{rem}

\subsection{Cohomology Groups and Induced Supermodules}
In this section,
we shall prepare some notations.
Let $\H$ be a closed super-subgroup of $\G$.
We consider the functor $(\G/\H)_{(\mathrm{n})} : R\mapsto \G(R)/\H(R)$
from the category of superalgebras
to the category of sets, 
which is called the {\it naive quotient} of $\G$ over $\H$.
Here, $\G(R)/\H(R)$ is the set of right cosets of $\H(R)$ in $\G(R)$,
as usual.
Then by Masuoka and Zubkov \cite[Theorem~0.1]{MasZub11},
the sheafification $\G/\H$ of the native quotient $(\G/\H)_{(\mathrm{n})}$
becomes a noetherian superscheme endowed with 
a morphism $\pi_{\G/\H}:\G\to\G/\H$ satisfying 
the conditions (Q1)--(Q5) described in Brundan's article \cite[\S2]{Bru06}.
See also Masuoka and Takahashi \cite[\S4.4]{MasTak18}.
As in \cite[\S2]{Bru06}
(see also \cite[Part~I, \S5.8]{Jan03}),
we let $\mathscr{L}=\mathscr{L}_{\G/\H}$ be 
a functor from ${}_{\H}\SMod$
to the category of quasi-coherent $\mathscr{O}_{\G/\H}\G$-supermodules
satisfying  
\[
\mathscr{L}(V)(\mathscr{U})
\,\,=\,\,
V \cotens_{\mathcal{O}(\H)}\mathcal{O}(\pi^{-1}_{\G/\H}(\mathscr{U}))
\]
for $V\in{}_{\H}\SMod$ and an open super-subfunctor $\mathscr{U}\subseteq \G/\H$
such that $\pi_{\G/\H}^{-1}(\mathscr{U})$ is affine.
This $\mathscr{L}(V)$ 
is the so-called {\it associated sheaf} to $V$ on $\G/\H$.

\medskip

Now, let us return to our situation.
Recall that, $G=\Gev$ and $B=\Bev$.
It is easy to see that
the quotient $\G/\B$ satisfies the condition (Q6) in \cite[\S2]{Bru06},
that is,
the even part $G/B=(\G/\B)_\ev$ of the quotient $\G/\B$ is projective.

Recall that, $\uu(\lambda)$ is a simple $\T$-supermodule
for $\lambda\in\Lambda$.
For simplicity,
we shall denote
the {\it cohomology group}
$H^n\big(\G/\B,\,\mathscr{L}(\res^{\T}_{\B}\uu(\lambda))\big)$
by $H^n(\lambda)$
for $\lambda\in\Lambda$ and $n\in\mathz_{\geq0}$.
Note that, $H^n(\lambda)$ does depend on the choice of 
the morphism $\gamma:\mathz\Delta\to \mathr$ defined in \S\ref{sec:tori,unip}.
Brundan \cite[Corollary~2.4]{Bru06} showed that
for each $n\geq0$,
we have an isomorphism
\[
H^n(\lambda) \,\,\cong\,\,
R^n\ind^{\G}_{\B}\big(\res^{\T}_{\B}(\uu(\lambda))\big)
\]
of $\G$-supermodules.
This is a super-analogue result of \cite[Part I, Proposition~5.12]{Jan03}.
In the following, we shall use this identification.
By construction, $\res^{\T}_{\B}(\uu(\lambda))$ is isomorphic to 
some (finite) copies of $\Bbbk^\lambda$
as $\B$-supermodules.
Thus, for $c\tens a \in \uu(\lambda)\tens \mathcal{O}(\G)$, we have
\[
c\tens a \in H^0(\lambda)
\iff
c\tens \lambda \tens a \,\,= \,\, c \tens \sum_a \overline{a_{(1)}}^{\B} \tens a_{(2)},
\]
where $\overline{a}^{\B}$ is the canonical image of $a\in \mathcal{O}(\G)$ in $\mathcal{O}(\B)$.

\subsection{Simple $\B$-Supermodules}
The inclusion $\U\subseteq \B$ makes $\mathcal{O}(\B)$ into 
a right $\mathcal{O}(\U)$-supercomodule.
We regard $\mathcal{O}(\T)\tens \mathcal{O}(\U)$ as a right $\mathcal{O}(\U)$-supercomodule
via $\id \tens \Deltaa_{\mathcal{O}(\U)}$.
Then one sees that the isomorphism 
$\mathcal{O}(\B)\cong \mathcal{O}(\T) \tens \mathcal{O}(\U)$ given in \eqref{eq:Bsplit}
is right $\mathcal{O}(\U)$-colinear.
By taking the functor $(-)\cotens_{\mathcal{O}(\U)}\Bbbk$ to both sides,
we get
\begin{equation} \label{eq:T=B^U}
\mathcal{O}(\B)^{\U} \,\,\cong\,\, \mathcal{O}(\T),
\end{equation}
where $\mathcal{O}(\B)^{\U}$ is the {\it $\U$-invariant super-subspace} of $\mathcal{O}(\B)$,
in other words, the $\mathcal{O}(\U)$-coinvariant super-subspace
$\mathcal{O}(\B)^{\co \mathcal{O}(\U)}$ of $\mathcal{O}(\B)$,
see Appendix~\ref{sec:supercomodules}.
It is easy to see that this isomorphism is left $\mathcal{O}(\B)$- right $\mathcal{O}(\T)$-colinear.

\medskip

In the following, for a left $\B$-supermodule $V$,
we consider $\res^{\B}_{\U}(V)$ and $\ind^{\T}_{\B}(V)$ via
the quotient maps $\mathcal{O}(\B)\twoheadrightarrow \mathcal{O}(\T)$ and
$\mathcal{O}(\B)\twoheadrightarrow \mathcal{O}(\U)$ respectively.

\begin{lemm}\label{prp:M^U=M^T_B}
For a left $\B$-supermodule $V$,
$\res^{\B}_{\U}(V)^{\U}$ is isomorphic to $\ind_{\B}^{\T}(V)$
as left $\T$-supermodules.
\end{lemm}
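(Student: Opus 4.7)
The plan is to express both sides as the cotensor product $V \cotens_{\mathcal{O}(\B)} \mathcal{O}(\T)$, using \eqref{eq:T=B^U} together with cotensor associativity.

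By the standard description of $\U$-invariants as $\mathcal{O}(\U)$-coinvariants, the left-hand side equals $V \cotens_{\mathcal{O}(\U)} \Bbbk$, where $V$ is viewed as a right $\mathcal{O}(\U)$-supercomodule through the surjection $\mathcal{O}(\B)\twoheadrightarrow \mathcal{O}(\U)$. Using the trivial identification $V \cotens_{\mathcal{O}(\B)} \mathcal{O}(\B) = V$ together with cotensor associativity, I would rewrite this as
\[
V \cotens_{\mathcal{O}(\U)} \Bbbk
\,\cong\, \bigl(V \cotens_{\mathcal{O}(\B)} \mathcal{O}(\B)\bigr) \cotens_{\mathcal{O}(\U)} \Bbbk
\,\cong\, V \cotens_{\mathcal{O}(\B)} \bigl(\mathcal{O}(\B) \cotens_{\mathcal{O}(\U)} \Bbbk\bigr).
\]
By \eqref{eq:T=B^U}, the inner cotensor $\mathcal{O}(\B) \cotens_{\mathcal{O}(\U)} \Bbbk = \mathcal{O}(\B)^{\U}$ equals $\mathcal{O}(\T)$ as a left $\mathcal{O}(\B)$-, right $\mathcal{O}(\T)$-supercomodule. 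Hence the outer cotensor becomes $V \cotens_{\mathcal{O}(\B)} \mathcal{O}(\T)$, which is precisely $\ind^{\T}_{\B}(V)$, endowed with the right $\mathcal{O}(\T)$-coaction inherited from $\Deltaa_{\mathcal{O}(\T)}$ on the second factor.

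The cotensor associativity step is legitimate because the decomposition $\mathcal{O}(\B) \cong \mathcal{O}(\T) \tens \mathcal{O}(\U)$ of right $\mathcal{O}(\U)$-supercomodules (established in the paragraph preceding \eqref{eq:T=B^U}) exhibits $\mathcal{O}(\B)$ as cofree (hence coflat) over $\mathcal{O}(\U)$, so the interchange of cotensor products presents no technical difficulty. The main thing to track carefully is the bicomodule structure on $\mathcal{O}(\T) \cong \mathcal{O}(\B)^{\U}$ recorded in \eqref{eq:T=B^U}: this ensures that the right $\T$-structure obtained on $V \cotens_{\mathcal{O}(\B)} \mathcal{O}(\T)$ agrees with the $\T$-supermodule structure on $V^{\U}$ coming from the fact that $\U$ is normal in $\B$ (by Lemma~\ref{prp:T-ad-U} and $\B = \T \ltimes \U$), so $V^{\U}$ is a $\B$-subsupermodule on which $\U$ acts trivially and thus descends through $\B \twoheadrightarrow \T$. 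Once the bicomodule bookkeeping is unwound, this final compatibility is immediate.
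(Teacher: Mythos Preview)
Your proof is correct and follows essentially the same route as the paper: both identify $\res^{\B}_{\U}(V)^{\U}$ with $(V\cotens_{\mathcal{O}(\B)}\mathcal{O}(\B))\cotens_{\mathcal{O}(\U)}\Bbbk$, use cotensor associativity to rewrite this as $V\cotens_{\mathcal{O}(\B)}\mathcal{O}(\B)^{\U}$, and then invoke \eqref{eq:T=B^U}. Your version is slightly more explicit about why associativity holds (coflatness of $\mathcal{O}(\B)$ over $\mathcal{O}(\U)$) and why the resulting $\T$-structures agree, points the paper leaves implicit.
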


\begin{proof}
It is easy to see that 
the canonical isomorphism $M\cotens_{\mathcal{O}(\B)}\mathcal{O}(\B)\cong V$ is 
left $\mathcal{O}(\U)$-colinear.
Then by taking the functor $(-)\cotens_{\mathcal{O}(\U)}\Bbbk$ to both sides,
we get 
\[
\big(V\cotens_{\mathcal{O}(\B)}\mathcal{O}(\B)\big) \cotens_{\mathcal{O}(\U)}\Bbbk 
\,\, \cong \,\,
\res^{\B}_{\U}(V)^{\U}.
\]
The left hand side is isomorphic to $V\cotens_{\mathcal{O}(\B)} (\mathcal{O}(\B)^{\U})$.
Thus, by combining this with \eqref{eq:T=B^U}, we are done.
\end{proof}

For a left $\T$-supermodule $N$,
we consider $\res^{\T}_{\B}(N)$
via the Hopf superalgebra splitting $\spl_{\B}:\mathcal{O}(\T)\hookrightarrow \mathcal{O}(\B)$ 
given in \eqref{eq:section}.
Since the composition the splitting $\spl_{\B}$ with 
the quotient $\mathcal{O}(\B)\twoheadrightarrow \mathcal{O}(\T)$ is identity,
$\res^{\B}_{\T}(\res^{\T}_{\B}(N))$ is just the original $N$.

\begin{prop}\label{prp:res(u)-simple}
For $\lambda\in \Lambda$,
the left $\B$-supermodule $\res^{\T}_{\B} (\uu(\lambda))$ is simple.
Moreover, this gives a one-to-one correspondence between
$\Lambda$ and $\Simple_\Pi(\B)$.
\end{prop}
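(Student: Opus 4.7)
The plan is to prove simplicity, injectivity, and surjectivity in turn, with surjectivity carrying the substantive content. For simplicity of $\res^\T_\B(\uu(\lambda))$, I will use that the composition of $\spl_\B$ with the quotient $\mathcal{O}(\B) \twoheadrightarrow \mathcal{O}(\T)$ is the identity, so $\res^\B_\T \circ \res^\T_\B = \id$ on $\T$-supermodules. Any $\B$-subsupermodule of $\res^\T_\B(\uu(\lambda))$ is then, via $\res^\B_\T$, a $\T$-subsupermodule of $\uu(\lambda)$, and Proposition~\ref{prp:T-simple} gives simplicity. Injectivity of $\lambda \mapsto \res^\T_\B(\uu(\lambda))$ modulo parity follows immediately: an isomorphism $\res^\T_\B(\uu(\lambda)) \cong \res^\T_\B(\uu(\mu))$ (possibly up to $\Pi$) of $\B$-supermodules restricts to one $\uu(\lambda) \cong \uu(\mu)$ of $\T$-supermodules, which by Proposition~\ref{prp:T-simple} forces $\lambda = \mu$.

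For surjectivity, given a simple $\B$-supermodule $L$, I aim to show that $L$ arises by inflation from a simple $\T$-supermodule. The key structural input is that $\U$ is normal in $\B = \T \ltimes \U$, so the $\U$-invariant super-subspace $L^\U$ is automatically a $\B$-subsupermodule of $L$. Unipotence of $\U$ (Proposition~\ref{prp:unip}) forces $\mathcal{O}(\U)$ to be an irreducible coalgebra with coradical $\Bbbk$, so every non-zero $\U$-supercomodule has non-zero $\U$-invariants (its socle consists of trivial one-dimensional sub-supercomodules). Hence $L^\U \neq 0$, and simplicity of $L$ yields $L^\U = L$; that is, $\U$ acts trivially on $L$. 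The $\mathcal{O}(\B)$-coaction on $L$ therefore factors through $L \otimes \mathcal{O}(\B)^\U$, and the identification $\mathcal{O}(\B)^\U \cong \mathcal{O}(\T)$ from \eqref{eq:T=B^U} equips $L$ with a $\T$-supermodule structure $N$ such that $L \cong \res^\T_\B(N)$ via $\spl_\B$. A proper $\T$-subsupermodule of $N$ would inflate to a proper $\B$-subsupermodule of $L$, so $N$ is simple, and Proposition~\ref{prp:T-simple} furnishes $\lambda \in \Lambda$ with $N \cong \uu(\lambda)$ or $\Pi \uu(\lambda)$.

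The main obstacle is the assertion $L^\U \neq 0$ for a possibly infinite-dimensional simple $L$. I expect to handle this by the standard socle argument in the category of $\mathcal{O}(\U)$-supercomodules: any non-zero supercomodule over an irreducible coalgebra contains a simple sub-supercomodule, which by irreducibility of $\mathcal{O}(\U)$ must be one-dimensional with trivial coaction, hence contained in $L^\U$. A secondary point requiring care is that the isomorphism \eqref{eq:T=B^U} is left $\mathcal{O}(\B)$- and right $\mathcal{O}(\T)$-colinear, which is exactly what is needed to transport the factored coaction into a genuine $\T$-supermodule structure compatible with $\spl_\B$.
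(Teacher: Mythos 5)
Your proof is correct, and it takes a slightly different route through the surjectivity step than the paper does. Both arguments hinge on the same two structural inputs: $L^\U\neq0$ (via unipotence of $\U$ and Lemma~\ref{prp:simple_lemma2}), and the $\tens$-split structure $\B=\T\ltimes\U$ encoded in \eqref{eq:Bsplit}, \eqref{eq:T=B^U} and Lemma~\ref{prp:M^U=M^T_B}. The difference is what you do with $L^\U\neq0$. The paper applies Lemma~\ref{prp:M^U=M^T_B} to identify $L^\U$ with $\ind^\T_\B(L)$, extracts a simple $\T$-subsupermodule $\uu(\lambda)$ of it, then runs Frobenius reciprocity to manufacture a nonzero $\B$-map $\res^\T_\B(\uu(\lambda))\to L$ — necessarily a surjection onto the simple $L$ — and finally applies $\res^\B_\T$ and simplicity of $\uu(\lambda)$ to upgrade it to an isomorphism. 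You instead observe that $\U$ is normal in $\B$, so $L^\U$ is a $\B$-subsupermodule; simplicity then forces $L=L^\U$, i.e., $\U$ acts trivially, and the $\mathcal{O}(\B)$-coaction factors through $\mathcal{O}(\B)^\U\cong\mathcal{O}(\T)$, yielding an inflation $L\cong\res^\T_\B(N)$ directly. Your route is closer to the classical argument for reductive groups (every simple $\B$-module is trivial on the unipotent radical) and gets to the inflation statement in one conceptual step, at the price of invoking normality of $\U$ in $\B$ to know that coinvariants are $\B$-stable — a fact the paper never states explicitly but which does follow from $\B=\T\ltimes\U$. The paper's Frobenius-reciprocity detour avoids any appeal to normality, which fits the comodule-theoretic flavor of the rest of the section, but reaches the same conclusion. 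Your treatment of simplicity and injectivity (restricting along the splitting $\spl_\B$, so that $\res^\B_\T\comp\res^\T_\B=\id$) matches what the paper implicitly relies on when it opens with "It is enough to show that for any simple $\B$-supermodule $L$ ..."

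Two small points worth being explicit about if you were to write this up: first, the claim that $L^\U$ is a $\B$-subsupermodule should be justified in the supercomodule language (e.g., by a short computation with the coaction axioms, or by citing that $\U$ is a normal closed super-subgroup of $\B$); and second, when you say a proper $\T$-subsupermodule of $N$ would inflate to a proper $\B$-subsupermodule of $L$, you are again using that $\res^\T_\B$ and $\res^\B_\T$ are inverse on the objects in question, which is exactly the role of the Hopf superalgebra splitting $\spl_\B$ from Proposition~\ref{prp:split}. Neither is a gap — both are routine — but they are the load-bearing facts your version of the argument leans on.
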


\begin{proof}
It is enough to show that for any simple $\B$-supermodule $L$,
there exists $\lambda\in\Lambda$ such that 
$L=\res^{\T}_{\B}(\uu(\lambda))$ or $\Pi \res^{\T}_{\B}(\uu(\lambda))$.
Since $L\not=0$, we see that $L^\U\not=0$,
by Proposition~\ref{prp:unip} and Lemma~\ref{prp:simple_lemma2}.
Then by Lemma~\ref{prp:M^U=M^T_B},
there exists $\lambda\in\Lambda$ such that 
$\ind^{\T}_{\B}(L)$ contains $\uu(\lambda)$ or $\Pi \uu(\lambda)$.
For simplicity, we shall concentrate on the case when 
$\uu(\lambda) \subseteq \ind^{\T}_{\B}(L)$.
Then by Frobenius reciprocity~\eqref{eq:Frob}, we get
\[
{}_{\B}\SMod\big(\res^{\T}_{\B}\big(\uu(\lambda)\big),\,L\big)
\,\,\cong\,\,
{}_{\T}\SMod\big(\uu(\lambda),\,\ind^{\T}_{\B}(L)\big)
\,\,\not=0.
\]
Hence, we get a $\B$-supermodule surjection 
$\res^{\T}_{\B}(\uu(\lambda))\twoheadrightarrow L$.
By applying the functor $\res^{\B}_{\T}(-)$ to both sides,
we have $\uu(\lambda) \twoheadrightarrow \res^{\B}_{\T}(L)$.
Since this is indeed a bijection, we are done.
\end{proof}

\subsection{Simple $\G$-Supermodules}
\label{sec:irrG}

Set
\[
\Lambda^\dominant \,\,:=\,\,\{\lambda\in \Lambda \mid H^0(\lambda) \not=0 \},
\qquad
L(\lambda)\,\,:=\,\,\soc_{\G}(H^0(\lambda)).
\]
Note that, for $\lambda\in\Lambda^\dominant$, 
the left $\G$-supermodule $L(\lambda)$ is non zero by Lemma~\ref{prp:simple_lemma}.

Let $N$ be a left $\T$-supermodule.
By taking the functor $\res^{\T}_{\B}(N)\cotens_{\mathcal{O}(\B)}(-)$ to 
the superalgebra inclusion $\mathcal{O}(\G) \hookrightarrow \mathcal{O}(\B)\tens \mathcal{O}(\B^+)$
given in \eqref{eq:inj},
we get 
\[
\res^{\T}_{\B}(N)\cotens_{\mathcal{O}(\B)}\mathcal{O}(\G) \hookrightarrow 
\res^{\T}_{\B}(N)\cotens_{\mathcal{O}(\B)}(\mathcal{O}(\B)\tens\mathcal{O}(\B^+)).
\]
Since the right hand side is equal to $N\tens \mathcal{O}(\B^+)$,
we have $\ind^{\G}_{\B}(\res^{\T}_{\B}(N)) \hookrightarrow N\tens \mathcal{O}(\B^+)$.
It is easy to see that the image of the above map lies in $N\cotens_{\mathcal{O}(\T)}\mathcal{O}(\B^+)$.
Thus, we get an inclusion
\begin{equation}\label{eq:indres->ind}
\res^{\G}_{\B^+}(\ind^{\G}_{\B}(\res^{\T}_{\B}(N)))
\hookrightarrow
\ind^{\B^+}_{\T}(N)
\end{equation}
of right $\mathcal{O}(\B^+)$-supercomodules, or equivalently left $\B^+$-supermodules.

\medskip

For simplicity,
we write $\res^{\G}_{\B^+}(H^0(\lambda))$ as $H^0(\lambda)$ for $\lambda\in\Lambda$.
Then by \eqref{eq:indres->ind}, 
$H^0(\lambda)$ can be regarded as a $\B^+$-super-submodule of 
$\ind^{\B^+}_{\T}(\uu(\lambda))$.

\begin{lemm}\label{prp:socH^0}
For $\lambda\in \Lambda^\dominant$,
we have $\soc_{\B^+}(H^0(\lambda))=\res^{\T}_{\B^+}(\uu(\lambda))$.
\end{lemm}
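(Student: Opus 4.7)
The plan is to compare $H^0(\lambda)$ with the ambient induced supermodule $\ind^{\B^+}_{\T}(\uu(\lambda))$ into which it embeds via \eqref{eq:indres->ind}, compute the $\B^+$-socle of that induced supermodule, and then transfer the conclusion using essentiality of the socle. First I would invoke the $\B^+$-analogue of the Hopf-crossed-product decomposition \eqref{eq:Bsplit}, namely $\mathcal{O}(\B^+)\cong\mathcal{O}(\T)\otimes\mathcal{O}(\U^+)$, which is left $\mathcal{O}(\T)$-colinear via the quotient $\mathcal{O}(\B^+)\twoheadrightarrow\mathcal{O}(\T)$. Cotensoring over $\mathcal{O}(\T)$ with $\uu(\lambda)$ then yields an isomorphism
\[
\ind^{\B^+}_{\T}(\uu(\lambda))\;\cong\;\uu(\lambda)\otimes\mathcal{O}(\U^+)
\]
of right $\mathcal{O}(\U^+)$-supercomodules, where $\U^+$ acts trivially on the first factor and by right translation on the second.

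Next I would identify the $\B^+$-socle of this supermodule. By the $\B^+$-analogue of Proposition~\ref{prp:res(u)-simple} (obtained by swapping $\gamma$ with $-\gamma$ in its proof), every simple $\B^+$-supermodule is of the form $\res^{\T}_{\B^+}(\uu(\mu))$ up to parity, and in particular has trivial $\U^+$-action. Hence
\[
\soc_{\B^+}\ind^{\B^+}_{\T}(\uu(\lambda))\;\subseteq\;\bigl(\uu(\lambda)\otimes\mathcal{O}(\U^+)\bigr)^{\U^+}\;=\;\uu(\lambda)\otimes\mathcal{O}(\U^+)^{\U^+}\;=\;\uu(\lambda)\otimes\Bbbk,
\]
where $\mathcal{O}(\U^+)^{\U^+}=\Bbbk$ is obtained by applying $\varepsilon\otimes\id$ to the coinvariance condition $\Deltaa(f)=f\otimes 1$. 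Since $\uu(\lambda)\otimes 1\cong\res^{\T}_{\B^+}(\uu(\lambda))$ is itself simple as a $\B^+$-supermodule, it coincides with the full $\B^+$-socle.

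Finally I would establish essentiality of this socle and apply it to $H^0(\lambda)$. For any nonzero $\B^+$-subcomodule $V\subseteq\ind^{\B^+}_{\T}(\uu(\lambda))$, local finiteness of supercomodules produces a nonzero finite-dimensional $\B^+$-subcomodule $V_0\subseteq V$, and the unipotence of $\U^+$ (Proposition~\ref{prp:unip}) forces $V_0^{\U^+}\neq 0$, so $V$ meets the socle. Applied to $V=H^0(\lambda)$, which is nonzero since $\lambda\in\Lambda^\dominant$, the intersection $H^0(\lambda)\cap(\uu(\lambda)\otimes 1)$ is a nonzero $\B^+$-submodule of the simple $\B^+$-supermodule $\uu(\lambda)\otimes 1$, hence equals it. Combined with the tautological inclusion $\soc_{\B^+}(H^0(\lambda))\subseteq\soc_{\B^+}\ind^{\B^+}_{\T}(\uu(\lambda))=\uu(\lambda)\otimes 1$, this yields the desired equality $\soc_{\B^+}(H^0(\lambda))=\res^{\T}_{\B^+}(\uu(\lambda))$. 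The main obstacle lies in the first step, where one must carefully verify that the crossed-product decomposition of $\mathcal{O}(\B^+)$ respects the $\mathcal{O}(\T)$-comodule structure governing the induction; once that is in place, the remaining steps are a soft combination of unipotence, essentiality, and simplicity.
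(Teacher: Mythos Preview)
Your argument is correct and reaches the same conclusion, but it takes a more hands-on route than the paper. The paper does not unpack the crossed-product isomorphism $\ind^{\B^+}_{\T}(\uu(\lambda))\cong\uu(\lambda)\otimes\mathcal{O}(\U^+)$; instead it computes the socle of $\ind^{\B^+}_{\T}(\uu(\lambda))$ purely by Frobenius reciprocity: for any simple $\B^+$-submodule $\res^{\T}_{\B^+}(\uu(\mu))$ (or its parity shift), a nonzero map into $\ind^{\B^+}_{\T}(\uu(\lambda))$ corresponds to a nonzero $\T$-map $\uu(\mu)\to\ind^{\T}_{\B^+}\ind^{\B^+}_{\T}(\uu(\lambda))=\uu(\lambda)$, forcing $\mu=\lambda$. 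Your computation of the $\U^+$-coinvariants is essentially the concrete content of that last identity (via the $\B^+$-analogue of Lemma~\ref{prp:M^U=M^T_B}), so the two arguments are close cousins. What you gain is an explicit picture of where $\uu(\lambda)$ sits inside the induced module and a cleaner essentiality argument showing $\uu(\lambda)\subseteq H^0(\lambda)$; the paper's version is shorter and avoids verifying colinearity of the crossed-product decomposition, at the cost of leaving the nonvanishing step (that $\soc_{\B^+}(H^0(\lambda))\neq 0$, hence equal to the simple $\res^{\T}_{\B^+}(\uu(\lambda))$) implicit.
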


\begin{proof}
We see that $\soc_{\B^+}(H^0(\lambda))$ is contained in 
$\soc_{\B^+}\big(\ind^{\B^+}_{\T}(\uu(\lambda))\big)$.
To conclude the proof,
it is enough to show that
\begin{equation} \label{eq:soc=res}
\soc_{\B^+}\big(\ind^{\B^+}_{\T}(\uu(\lambda))\big) \,\,=\,\, 
\res^{\T}_{\B^+}(\uu(\lambda)).
\end{equation}
By Proposition~\ref{prp:res(u)-simple},
any simple $\B^+$-super-submodule of $\ind^{\B^+}_{\T}(\uu(\lambda))$ 
is ether (i) $\res^{\T}_{\B^+}(\uu(\mu))$ or (ii) $\Pi \res^{\T}_{\B^+}(\uu(\mu))$
for some $\mu\in\Lambda$.
First, we consider the case (i).
In this case, we have
\[
0\not=\,\,
{}_{\B^+}\SMod\big(\res^{\T}_{\B^+}\big(\uu(\mu)\big),\,
\ind^{\B^+}_{\T}(\uu(\lambda))\big)
\,\,\cong\,\,
{}_{\T}\SMod\big(\uu(\mu),\,\uu(\lambda)\big),
\]
by Frobenius reciprocity \eqref{eq:Frob} and 
$\ind^{\T}_{\B^+}(\ind^{\B^+}_{\T}(\uu(\lambda)))=\uu(\lambda)$.
Thus, we conclude that $\lambda=\mu$,
and hence the equation \eqref{eq:soc=res} holds.
Next, we consider the case (ii).
Similarly, by Frobenius reciprocity, we have 
$\Pi\uu(\mu)\cong \uu(\lambda)$.
Thus, we conclude that $\mu=\lambda$ and $\uu(\lambda)$ is of type $\QQ$,
and hence the equation \eqref{eq:soc=res} also holds.
\end{proof}

\begin{prop}\label{prp:L(lambda)}
For $\lambda\in \Lambda^\dominant$,
the left $\G$-supermodule $L(\lambda)$ is a unique simple super-submodule of $H^0(\lambda)$.
\end{prop}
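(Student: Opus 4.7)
The plan is to show that every non-zero $\G$-super-submodule of $H^0(\lambda)$ contains the fixed subspace $\res^{\T}_{\B^+}(\uu(\lambda))$ identified in Lemma~\ref{prp:socH^0}; once this is in hand, uniqueness of a simple $\G$-super-submodule, and hence the simplicity of the $\G$-socle, will follow by a standard intersection argument.

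First I would take an arbitrary non-zero $\G$-super-submodule $M\subseteq H^0(\lambda)$ and fix some $0\neq m\in M$. Since $\mathcal{O}(\G)$-supercomodules are locally finite, $m$ lies in a finite-dimensional $\G$-super-submodule of $M$; restricting to $\B^+$ yields a finite-dimensional non-zero $\B^+$-super-submodule $N\subseteq M$. By finite-dimensionality, $N$ contains a minimal non-zero $\B^+$-super-submodule $S$, which is simple. Then Lemma~\ref{prp:socH^0} gives $S\subseteq \soc_{\B^+}(H^0(\lambda))=\res^{\T}_{\B^+}(\uu(\lambda))$, and Proposition~\ref{prp:res(u)-simple} says that the right-hand side is itself simple as a $\B^+$-supermodule. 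Therefore $S=\res^{\T}_{\B^+}(\uu(\lambda))$, so that $\res^{\T}_{\B^+}(\uu(\lambda))\subseteq M$.

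To finish, suppose $L_1$ and $L_2$ are simple $\G$-super-submodules of $H^0(\lambda)$. By the preceding step, both contain the non-zero subspace $\res^{\T}_{\B^+}(\uu(\lambda))$, so $L_1\cap L_2\neq 0$; simplicity of each $L_i$ then forces $L_1=L_2$. Existence of at least one simple $\G$-super-submodule of the non-zero supermodule $H^0(\lambda)$ is guaranteed by Lemma~\ref{prp:simple_lemma}, and so $L(\lambda)=\soc_{\G}(H^0(\lambda))$ is a unique simple super-submodule of $H^0(\lambda)$.

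The only genuinely delicate point is the passage from an arbitrary, a priori infinite-dimensional, $\G$-super-submodule $M$ to a simple $\B^+$-super-submodule sitting inside $M$; I would rely on local finiteness of $\G$-supermodules to cut $M$ down to a finite-dimensional $\B^+$-subspace, after which the descending-chain argument produces $S$ without further effort.
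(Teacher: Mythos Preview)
Your proof is correct and follows essentially the same line as the paper's: both use Lemma~\ref{prp:socH^0} to show that any simple (indeed any nonzero) $\G$-super-submodule of $H^0(\lambda)$ must contain $\res^{\T}_{\B^+}(\uu(\lambda))$, whence any two simple $\G$-super-submodules intersect nontrivially and hence coincide. Your local-finiteness detour is harmless but unnecessary, since Lemma~\ref{prp:simple_lemma} already guarantees $\soc_{\B^+}(M)\neq 0$ for any nonzero $\B^+$-supermodule $M$, which is all you need to produce the simple $\B^+$-super-submodule $S\subseteq M$.
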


\begin{proof}
Suppose that $L,L'$ are two simple $\G$-super-submodules of $H^0(\lambda)$.
Then by Lemma~\ref{prp:socH^0}, 
$\soc_{\B^+}(\res^{\G}_{\B^+}(L))$ and 
$\soc_{\B^+}(\res^{\G}_{\B^+}(L'))$ should coincide with $\res^{\T}_{\B^+}(\uu(\lambda))$.
Thus, $\uu(\lambda)$ is included in $L\cap L'$, and hence $L=L'$.
\end{proof}

By Proposition~\ref{prp:L(lambda)},
we get the following map.
\[
\Lambda^\dominant \longrightarrow \Simple_\Pi(\G);
\qquad
\lambda \longmapsto L(\lambda).
\]
The next is our main theorem,
which is a generalization of Serganova's result \cite[Theorem~9.9]{Ser11}.

\begin{thm}\label{prp:LMQ}
The map above is bijective.
Moreover, if (1) $\delta_\lambda=0$ or 
(2) $d_\lambda$ is even and $\delta_\lambda\in(\Bbbk^\times)^2$,
then $L(\lambda)$ is of type $\MM$.
Otherwise, $L(\lambda)$ is of type $\QQ$.
\end{thm}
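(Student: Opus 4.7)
The plan is to reduce everything to the known classification of simple $\T$-supermodules (Proposition~\ref{prp:T-simple}) by taking $\B^+$-socles. The crucial observation is that for $\lambda\in\Lambda^{\dominant}$ one has
\[
\soc_{\B^+}L(\lambda) = \res^{\T}_{\B^+}(\uu(\lambda)).
\]
Indeed, $L(\lambda)\subseteq H^0(\lambda)$ by Proposition~\ref{prp:L(lambda)}, so $\soc_{\B^+}L(\lambda)\subseteq \soc_{\B^+}H^0(\lambda)=\res^{\T}_{\B^+}(\uu(\lambda))$ by Lemma~\ref{prp:socH^0}, and the right-hand side is a simple $\B^+$-supermodule by Proposition~\ref{prp:res(u)-simple}. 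Non-triviality of the left-hand side follows from $L(\lambda)\neq 0$, the unipotency of $\U^+$ (Proposition~\ref{prp:unip}), and the $\U^+$-analogue of Lemma~\ref{prp:simple_lemma2}.

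Injectivity and the $\MM/\QQ$ dichotomy both follow from this socle formula. Since $\Pi$ is exact and preserves simple subquotients, any $\G$-isomorphism $L(\lambda)\cong L(\mu)$ or $L(\lambda)\cong \Pi L(\mu)$ restricts to a $\B^+$-isomorphism of socles, and then via the splitting $\mathcal{O}(\T)\hookrightarrow\mathcal{O}(\B^+)$ to a $\T$-isomorphism $\uu(\lambda)\cong \uu(\mu)$ or $\uu(\lambda)\cong \Pi\uu(\mu)$ respectively. In either case Proposition~\ref{prp:T-simple} forces $\lambda=\mu$ (and, in the second case, identifies $\uu(\lambda)$ as being of type $\QQ$), which proves injectivity and simultaneously shows $L(\lambda)\cong \Pi L(\lambda)$ if and only if $\uu(\lambda)\cong \Pi\uu(\lambda)$. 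Combining with Proposition~\ref{prp:T-simple} yields the stated description of the type in terms of $\delta_\lambda$ and $d_\lambda$.

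For surjectivity, let $L\in\Simple(\G)$. Every simple $\mathcal{O}(\G)$-supercomodule is finite-dimensional (as any comodule is a directed union of finite-dimensional sub-comodules), hence the non-zero $\B$-supermodule $\res^{\G}_{\B}L$ admits a simple quotient, which by Proposition~\ref{prp:res(u)-simple} is isomorphic to $\res^{\T}_{\B}\uu(\mu)$ or $\Pi\res^{\T}_{\B}\uu(\mu)$ for a unique $\mu\in\Lambda$. Replacing $L$ by $\Pi L$ if necessary, the Frobenius reciprocity~\eqref{eq:Frob} converts this surjection into a non-zero $\G$-morphism $L\to \ind^{\G}_{\B}(\res^{\T}_{\B}\uu(\mu))=H^0(\mu)$, which is injective by simplicity of $L$. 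Therefore $\mu\in\Lambda^{\dominant}$ and the image lies in $\soc_{\G}H^0(\mu)=L(\mu)$; as both $L$ and $L(\mu)$ are simple, $L\cong L(\mu)$. The main technical hazard is the parity bookkeeping at this step and in the $\Pi$-orbit comparison for injectivity, which one navigates by the indicated $L\rightsquigarrow \Pi L$ replacement; the rest of the argument is structural, built directly on the results of \S\ref{sec:irrT} and the present subsection.
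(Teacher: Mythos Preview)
Your argument is correct and close in spirit to the paper's, but the surjectivity step is organized differently and there is one small gap in the $\MM/\QQ$ direction.

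For surjectivity, the paper does not argue directly that $\res^{\G}_{\B}L$ has a simple quotient. Instead it passes to the dual: one takes a simple $\B$-sub\-super\-module of $\res^{\G}_{\B}(L^*)$ (which exists by Lemma~\ref{prp:simple_lemma}), dualizes back to obtain a $\B$-surjection $\res^{\G}_{\B}L\twoheadrightarrow \res^{\T}_{\B}(\uu(\mu))^*\cong \res^{\T}_{\B}(\uu(\lambda))$, and then applies Frobenius reciprocity. Your route---invoking finite-dimensionality of $L$ to get a simple quotient directly---is valid and slightly more elementary, since it avoids the dualization trick entirely. The paper's approach has the advantage of resting only on the socle statement Lemma~\ref{prp:simple_lemma}, without appealing to finite-dimensionality of simple comodules.

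On the type dichotomy: your socle argument shows $L(\lambda)\cong\Pi L(\lambda)\Rightarrow\uu(\lambda)\cong\Pi\uu(\lambda)$, but the claimed ``if and only if'' needs the converse as well, and your text does not supply it. This is easy to fix: if $\uu(\lambda)\cong\Pi\uu(\lambda)$ then $H^0(\lambda)=\ind^{\G}_{\B}\res^{\T}_{\B}\uu(\lambda)\cong\ind^{\G}_{\B}\res^{\T}_{\B}\Pi\uu(\lambda)=\Pi H^0(\lambda)$, whence $L(\lambda)=\soc_{\G}H^0(\lambda)\cong\soc_{\G}\Pi H^0(\lambda)=\Pi L(\lambda)$. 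The paper handles this by the one-line observation $\Pi L(\lambda)=\soc_{\G}(\Pi H^0(\lambda))$; you should add the analogous remark.
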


\begin{proof}
Let $L$ be a simple $\G$-supermodule.
We see that $\soc_{\B}(\res^{\G}_{\B}(L^*))\not=0$
by Lemma~\ref{prp:simple_lemma}.
Then 
$\res^{\G}_{\B}(L^*)$ includes 
$\res^{\T}_{\B}(\uu(\mu))$ or $\Pi\res^{\T}_{\B}(\uu(\mu))$
for some $\mu\in\Lambda$.
Taking the linear dual $(-)^*$,
we get a morphism from
$\res^{\G}_{\B}(L)$ to $\res^{\T}_{\B}(\uu(\mu))^*$ or $\Pi\res^{\T}_{\B}(\uu(\mu))^*$.
Here, we used $L=L^{**}$.
Since $\uu(\mu)^*$ is simple,
there exists $\lambda\in\Lambda$ such that $\uu(\mu)^*=\uu(\lambda)$ 
by Proposition~\ref{prp:res(u)-simple}.
Thus, we get a non trivial morphism from 
$\res^{\G}_{\B}(L)$ to (i) $\res^{\T}_{\B}(\uu(\lambda))$ or 
(ii) $\Pi\res^{\T}_{\B}(\uu(\lambda))$.

First, we consider the case (i).
By Frobenius reciprocity \eqref{eq:Frob}, we get
\[
0\not=\,\,
{}_{\B}\SMod\big(\res^{\G}_{\B}(L),\,\res^{\T}_{\B}(\uu(\lambda))\big)
\,\,\cong\,\,
{}_{\T}\SMod\big(L,\,H^0(\lambda)\big).
\]
Thus, we conclude that $L\hookrightarrow H^0(\lambda)$, 
and hence $L=L(\lambda)$.
Next, we consider the case (ii).
In this case, a similar argument ensures that 
$L\hookrightarrow \Pi H^0(\lambda)$, and hence $L=\Pi L(\lambda)$.

Since $\Pi L(\lambda) = \soc_G(\Pi H^0(\lambda))$,
we conclude that $L(\lambda)$ is of type $\MM$ (resp. $\QQ$)
if and only if $\uu(\lambda)$ is of type $\MM$ (resp. $\QQ$).
Thus, the last statement directly follows from Proposition~\ref{prp:T-simple}.
\end{proof}

If $\h_\odd=0$,
then $b^\lambda=0$ by definition. 
Hence, in this case, $L(\lambda)$ is always of type $\MM$,
that is,
$L(\lambda)$ is not isomorphic to $\Pi L(\lambda)$
as $\G$-supermodules.

\begin{rem}
For the queer supergroup $\G=\Q(n)$,
the result above is a part of 
Brundan and Kleshchev's result \cite[Theorem~6.11]{BruKle03}
(where the base field $\Bbbk$ is assumed to be algebraically closed).
Indeed, one easily sees that 
their $h_{p'}(\lambda)$ coincides with our $d_\lambda$,
where
$p:=\charr(\Bbbk)$ and
$h_{p'}(\lambda):=\#\{i\in\{1,\dots,n\} \mid p\nmid d_i\}$
for $\lambda = d_1\lambda_1 + \cdots + d_n\lambda_n 
\in \Lambda \cong \bigoplus_{i=1}^n \mathz \lambda_i$.
\qed
\end{rem}

\subsection{Some Properties of Induced Supermodules}
\label{sec:someH^0}

In this section, we study some properties of 
$H^0(\lambda)$ for $\lambda\in\Lambda^\dominant$.

\begin{lemm}\label{prp:H^0^U}
For $\lambda\in \Lambda^\dominant$,
we have $H^0(\lambda)^{\U^+} \cong \uu(\lambda)$ as left $\T$-supermodules.
\end{lemm}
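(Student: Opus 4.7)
The plan is to sandwich $H^0(\lambda)^{\U^+}$ between two occurrences of $\uu(\lambda)$, using the embedding \eqref{eq:indres->ind} together with Lemma~\ref{prp:socH^0}. Since $\uu(\lambda)$ is simple as a $\T$-supermodule, this sandwich will force equality.

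First I would invoke \eqref{eq:indres->ind} to embed $H^0(\lambda)$ $\B^+$-equivariantly into $\ind^{\B^+}_{\T}(\uu(\lambda))$. The $\U^+$-invariants functor is left exact (it is $(-)\cotens_{\mathcal{O}(\U^+)}\Bbbk$), so applying it yields a $\T$-equivariant inclusion
\[
H^0(\lambda)^{\U^+} \hookrightarrow \ind^{\B^+}_{\T}(\uu(\lambda))^{\U^+}.
\]
Next, the proof of Lemma~\ref{prp:M^U=M^T_B} uses only \eqref{eq:T=B^U} and the crossed-product decomposition $\mathcal{O}(\B)\cong \mathcal{O}(\T)\cmdblackltimes \mathcal{O}(\U)$; the identical argument, with $\B^+$ and $\U^+$ replacing $\B$ and $\U$, produces $V^{\U^+}\cong \ind^{\T}_{\B^+}(V)$ for any $\B^+$-supermodule $V$. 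Taking $V=\ind^{\B^+}_{\T}(\uu(\lambda))$ and noting that composing $\ind^{\B^+}_{\T}$ with its ``quotient-restriction'' returns the original $\T$-supermodule gives
\[
\ind^{\B^+}_{\T}(\uu(\lambda))^{\U^+} \,\,\cong\,\, \uu(\lambda).
\]

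Third, I would invoke Lemma~\ref{prp:socH^0} to identify $\soc_{\B^+}(H^0(\lambda)) = \res^{\T}_{\B^+}(\uu(\lambda))$. The $\B^+$-structure on the right-hand side is defined via the splitting $\spl_{\B^+}:\mathcal{O}(\T)\hookrightarrow \mathcal{O}(\B^+)$, whose image under $\mathcal{O}(\B^+)\cong \mathcal{O}(\T)\cmdblackltimes\mathcal{O}(\U^+)$ sits in $\mathcal{O}(\T)\otimes \Bbbk$; therefore $\U^+$ acts trivially on the socle, and
\[
\res^{\T}_{\B^+}(\uu(\lambda)) \,=\, \soc_{\B^+}(H^0(\lambda))^{\U^+} \,\subseteq\, H^0(\lambda)^{\U^+},
\]
which shows in particular that $H^0(\lambda)^{\U^+}\neq 0$.

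Finally, since $\uu(\lambda)$ is simple as a $\T$-supermodule (Proposition~\ref{prp:T-simple}), the nonzero $\T$-sub-supermodule $H^0(\lambda)^{\U^+}$ of $\ind^{\B^+}_{\T}(\uu(\lambda))^{\U^+}\cong \uu(\lambda)$ must coincide with $\uu(\lambda)$, giving the claim. The only point requiring any care is the second step, where one must check that the construction of Lemma~\ref{prp:M^U=M^T_B} transfers verbatim to $(\B^+,\U^+,\T)$; this is routine because $\B^+=\T\ltimes \U^+$ is built in exactly the same way as $\B=\T\ltimes\U$, so no new arguments are needed.
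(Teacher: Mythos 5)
Your proposal is correct and takes essentially the same route as the paper: both apply the $\B^+$-analogue of Lemma~\ref{prp:M^U=M^T_B} to the embedding \eqref{eq:indres->ind} and conclude via the isomorphism $\ind^{\T}_{\B^+}\comp\ind^{\B^+}_{\T}\cong\id$. The paper compresses the final step into the terse phrase ``and hence this is bijective,'' whereas you spell out exactly why the resulting inclusion into $\uu(\lambda)$ is onto — by exhibiting the nonzero copy of $\uu(\lambda)$ inside $H^0(\lambda)^{\U^+}$ coming from Lemma~\ref{prp:socH^0} and appealing to simplicity — which is a welcome clarification of the same argument.
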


\begin{proof}
By taking the functor 
$\ind^{\T}_{\B^+}(-)$ to the inclusion 
$H^0(\lambda) \hookrightarrow \ind^{\B^+}_{\T}(\uu(\lambda))$,
we get an inclusion
$\ind^{\T}_{\B^+}(H^0(\lambda)) \hookrightarrow \uu(\lambda)$
of $\T$-supermodules,
and hence this is bijective.
Then by a $\B^+$-version of Lemma~\ref{prp:M^U=M^T_B}, we are done.
\end{proof}

For $\lambda\in\Lambda^\dominant$,
we regard $H^0(\lambda)$ as a left $T$-supermodule
via the left adjoint action, as usual.
Then for $\mu\in\Lambda$ ($\subseteq \mathcal{O}(T)$),
the $\mu$-weight superspace of $H^0(\lambda)$ is described as follows.
\[
H^0(\lambda)^\mu \,\,=\,\,
\{ \xi \in H^0(\lambda) \mid u\hits \xi = \langle u, \mu\rangle \xi 
\,\,\text{ for } u\in\hy(T) \}.
\]
We define a partial order $\leq$ on $\Lambda$
as follows.
\begin{equation} \label{eq:order}
\mu \leq \lambda \quad :\iff\quad
\lambda-\mu =
\sum_{\alpha\in\Delta^+} n_\alpha \alpha 
\quad \text{for some } n_\alpha\in\mathz_{\geq0}.
\end{equation}
The following proof is based on the proof of
\cite[Part~II, Proposition~2.2(a)]{Jan03}.

\begin{prop}\label{prp:maxwt}
For $\lambda\in\Lambda^\dominant$,
$\lambda$ is a maximal $T$-weight of $H^0(\lambda)$ with respect to $\leq$
and $H^0(\lambda)^\lambda \cong \uu(\lambda)$ as left $\T$-supermodules.
\end{prop}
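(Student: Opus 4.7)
The plan is to leverage the embedding \eqref{eq:indres->ind} together with Lemma~\ref{prp:H^0^U} and to do weight-space bookkeeping on the simpler model $\ind^{\B^+}_\T(\uu(\lambda))$. First I would combine the splitting $\mathcal{O}(\B^+) \cong \mathcal{O}(\T)\tens\mathcal{O}(\U^+)$ (the analogue of \eqref{eq:Bsplit}) with the definition of the cotensor to identify
\[
\ind^{\B^+}_\T\big(\uu(\lambda)\big) \;\cong\; \uu(\lambda)\tens \mathcal{O}(\U^+)
\]
as left $\T$-supermodules, where $\T$ acts diagonally via its given action on $\uu(\lambda)$ and the left adjoint action on $\mathcal{O}(\U^+)$. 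Thus \eqref{eq:indres->ind} gives an embedding $H^0(\lambda)\hookrightarrow \uu(\lambda)\tens\mathcal{O}(\U^+)$ of left $\T$-supermodules.

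Next I would compute the $T$-weights of the target. Because $\Lie(\U^+)=\n^+=\bigoplus_{\alpha\in\Delta^+}\g^\alpha$, the left adjoint $T$-action makes $\mathcal{O}(\U^+)$ into the algebra of polynomial functions whose weights lie in $\sum_{\alpha\in\Delta^+}\mathz_{\leq 0}\,\alpha$, with zero-weight space equal to the constants $\Bbbk$ (for this I would either quote the argument in \cite[Part~II, 1.19--1.20]{Jan03} transported to our PBW basis in Theorem~\ref{prp:PBW}, or write $\mathcal{O}(\U^+)$ as the dual of the coalgebra $\hy(\U^+)$ whose weights are $\sum_{\alpha\in\Delta^+}\mathz_{\geq 0}\,\alpha$). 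Since every $T$-weight on $\uu(\lambda)$ is $\lambda$ (by the remark following Proposition~\ref{prp:T-simple}), every $T$-weight of $\uu(\lambda)\tens \mathcal{O}(\U^+)$ is of the form $\lambda-\sum_{\alpha\in\Delta^+}n_\alpha\alpha$ with $n_\alpha\in\mathz_{\geq 0}$, and the $\lambda$-weight space equals $\uu(\lambda)\tens \Bbbk = \uu(\lambda)$.

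Transferring this through the embedding, every $T$-weight $\mu$ of $H^0(\lambda)$ satisfies $\mu\leq\lambda$ in the sense of \eqref{eq:order}, which proves the maximality claim, and moreover
\[
H^0(\lambda)^{\lambda} \;\subseteq\; \big(\uu(\lambda)\tens\mathcal{O}(\U^+)\big)^{\lambda} \;=\; \uu(\lambda).
\]
On the other hand, Lemma~\ref{prp:H^0^U} gives $H^0(\lambda)^{\U^+}\cong \uu(\lambda)$ as left $\T$-supermodules, and since all $T$-weights on $\uu(\lambda)$ equal $\lambda$, this $\U^+$-invariant subspace is contained in $H^0(\lambda)^{\lambda}$. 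Combining the two containments yields $H^0(\lambda)^{\lambda}=H^0(\lambda)^{\U^+}\cong \uu(\lambda)$ as left $\T$-supermodules, and in particular $H^0(\lambda)^{\lambda}\neq 0$, so $\lambda$ really is a (maximal) weight.

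The only real subtlety is step two: justifying that the $T$-weights of $\mathcal{O}(\U^+)$ under the adjoint action lie in $-\mathz_{\geq 0}\Delta^+$ with zero-weight space $\Bbbk$. This is where the $\tens$-splitness of $\U^+$ (coming from Proposition~\ref{prp:sub-pair}) and the PBW description of $\hy(\U^+)$ from Theorem~\ref{prp:PBW} are needed; once that weight decomposition is in hand the rest is a direct comparison inside $\uu(\lambda)\tens\mathcal{O}(\U^+)$.
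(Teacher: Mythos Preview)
Your argument is correct and differs from the paper's in organization, though both rely on the same underlying ingredients (the big-cell injectivity of Corollary~\ref{prp:inj} and Lemma~\ref{prp:H^0^U}).

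The paper works intrinsically inside $H^0(\lambda)$: it first observes that any \emph{maximal} weight space sits inside $H^0(\lambda)^{\U^+}$ (because $\hy(\U^+)$ raises weights), and then, to prove the reverse inclusion $H^0(\lambda)^{\U^+}\subseteq H^0(\lambda)^\lambda$, it introduces an explicit evaluation-type map and invokes the injectivity of $\mathcal{O}(\G)\to\mathcal{O}(\U^+)\tens\mathcal{O}(\B)$ directly. You instead embed $H^0(\lambda)$ into the explicit model $\uu(\lambda)\tens\mathcal{O}(\U^+)$ via \eqref{eq:indres->ind} and read off the weight bound there, obtaining at once that \emph{every} $T$-weight $\mu$ of $H^0(\lambda)$ satisfies $\mu\leq\lambda$ (not merely that $\lambda$ is maximal among those that occur) and that $H^0(\lambda)^\lambda\subseteq\uu(\lambda)$. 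Your route is somewhat cleaner and avoids the ad hoc evaluation map; the paper's route stays closer to Jantzen's classical argument in \cite[Part~II, Proposition~2.2(a)]{Jan03}. The one point you flag --- that the adjoint $T$-weights of $\mathcal{O}(\U^+)$ lie in $-\mathz_{\geq0}\Delta^+$ with zero-weight space $\Bbbk$ --- is exactly what the paper also needs (implicitly, in the step ``maximal weight vectors are $\U^+$-invariant''), and it follows from the $T$-equivariant $\tens$-splitting as in \eqref{eq:A=A0(tens)W} together with the fact that $\gamma$ is strictly positive on $\Delta^+$.
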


\begin{proof}
Suppose that $\mu\in\Lambda$ is a maximal $T$-weight of $H^0(\lambda)$.
Then by definition, the $\mu$ weight superspace $H^0(\lambda)^\mu$
is included in the $\U^+$-invariant super-subspace 
$H^0(\lambda)^{\U^+}$ of $H^0(\lambda)$.

On the other hand, 
since $\uu(\lambda)$ is isomorphic to some copies of $\Bbbk^\lambda$ as $T$-modules,
we see that $H^0(\lambda)^\lambda\not=0$ by Lemma~\ref{prp:H^0^U}.
We fix a non-zero element $x\in H^0(\lambda)^\lambda$ and 
consider the following map.
\[
H^0(\lambda) \longrightarrow H^0(\lambda)^\lambda;
\quad c\tens a \longmapsto c\tens \varepsilon_\G(a) x,
\]
where $c\tens a \in\uu(\lambda)\tens \mathcal{O}(\G)$ and 
$\varepsilon_\G : \mathcal{O}(\G)\to \Bbbk$ is the counit of $\mathcal{O}(\G)$.
We will show that this is injective.
For simplicity, we assume that $\uu(\lambda)=\Bbbk^\lambda$.
For $c\tens a\in H^{0}(\lambda)^{\U^+}$ with $c\not=0$ and $\varepsilon_\G(a)=0$,
it is easy to see that the canonical images of $a\in\mathcal{O}(\G)$ 
in $\mathcal{O}(\U^+)$ and $\mathcal{O}(\B)$ are both zero.
As in the proof of Corollary~\ref{prp:inj},
we can also prove that 
the composition $\mathcal{O}(\G)\to \mathcal{O}(\G)\tens \mathcal{O}(\G) \to \mathcal{O}(\U^+)\tens \mathcal{O}(\B)$
is injective.
Thus, we can conclude that $a=0$,
and hence we may regard $H^0(\lambda)^{\U^+}$ as 
a $\T$-super-submodule of $H^0(\lambda)^\lambda$.
Combined with the above, 
we get $\mu=\lambda$ and $H^0(\lambda)^{\U^+}=H^0(\lambda)^\lambda$.
\end{proof}

Set $A:=\mathcal{O}(\G)$.
Recall that, 
$I_A$ is the super-ideal of $A$ generated by the odd part $A_\odd$, 
see \S\ref{sec:alg}.
Then the (finite) descending chain 
$I^0_A := A \supseteq I_A \supseteq I^2_A \supseteq \cdots$ defines
the graded (ordinary) algebra 
\[
\gr(A) \,\, := \,\, \bigoplus_{n\geq 0} I^n_A/I^{n+1}_A
\]
of $A$.
On the other hand,
we regard $A$ as a right $\overline{A}$-comodule via
the right coadjoint action \eqref{eq:coad}.
This coaction makes $\wedge(W^{A})$ into a right $\overline{A}$-comodule Hopf algebra,
and hence we can construct the cosmash product 
$\overline{A} \cmdblackltimes \wedge(W^{A})$.
Then we have an isomorphism 
$\gr(A) \cong \overline{A} \cmdblackltimes \wedge(W^{A})$
of graded (ordinary) Hopf algebras,
see \cite[Proposition~4.9(2)]{Mas05}.
Recall that, $T$ is a split maximal torus of $G$.
Since $\mathcal{O}(T)$ is a cosemisimple Hopf algebra, 
we see that the graded algebra $\gr(A)$ is isomorphic to
$A$ as right $\mathcal{O}(T)$-comodules via the adjoint action of $T$.
Hence, we get an isomorphism
\begin{equation} \label{eq:A=A0(tens)W}
A \overset{\cong}{\longrightarrow} 
\overline{A} \cmdblackltimes \wedge (W^{A})
\end{equation}
of left $\overline{A}$- right $\mathcal{O}(T)$-comodules
(or equivalently, right $G$- left $T$-modules).
Here, the left $\overline{A}$-comodule structure of right hand side is 
$\Deltaa_{\overline{A}} \tens \id$.

Recall that $G=\Gev$ and $B=\Bev$.
Then we have the following lemma.

\begin{lemm}\label{prp:ind->ind(tens)wedge}
For a left $\B$-supermodule $V$, there is an inclusion
\[
\ind^{\G}_{\B}(V) \hookrightarrow 
\ind^{G}_{B}(\res^{\B}_B(V)) \tens \wedge(W^{\mathcal{O}(\G)})
\]
of left $T$-modules.
\end{lemm}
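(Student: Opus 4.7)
The plan is to combine the $\tens$-splitting of $\mathcal{O}(\G)$ given in \eqref{eq:A=A0(tens)W} (with $A:=\mathcal{O}(\G)$ and $\overline{A}=\mathcal{O}(G)$)
\[
\psi\,:\,A \,\,\overset{\cong}{\longrightarrow}\,\, \overline{A}\cmdblackltimes \wedge(W^{A}),
\]
an isomorphism of left $\overline{A}$- right $\mathcal{O}(T)$-comodules, with the cotensor description of induction. Specifically, consider the composite
\[
\ind^{\G}_{\B}(V)\,=\,V\cotens_{\mathcal{O}(\B)}A \,\,\hookrightarrow\,\, V\tens A \,\,\overset{\id\tens\psi}{\longrightarrow}\,\, V\tens \overline{A}\tens \wedge(W^{A}),
\]
which is injective since $\psi$ is bijective.

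The main step is to show that the image lies inside the subspace $\bigl(V\cotens_{\mathcal{O}(B)}\overline{A}\bigr)\tens \wedge(W^{A}) = \ind^{G}_{B}(\res^{\B}_{B}(V))\tens \wedge(W^{A})$. The key input is the left $\overline{A}$-colinearity of $\psi$: it translates the cotensor identity defining $V\cotens_{\mathcal{O}(\B)}A$, imposed through the quotient $\mathcal{O}(\G)\twoheadrightarrow \mathcal{O}(\B)$, into the cotensor identity defining $V\cotens_{\mathcal{O}(B)}\overline{A}$, imposed through $\overline{A}\twoheadrightarrow \mathcal{O}(B)$. Concretely, starting from the defining identity of $\xi=\sum_i v_i\tens a_i\in V\cotens_{\mathcal{O}(\B)}A$, I apply the further quotient $\mathcal{O}(\B)\twoheadrightarrow \mathcal{O}(B)$ to the middle factor and use the commutativity of $\mathcal{O}(\G)\twoheadrightarrow\mathcal{O}(\B)\twoheadrightarrow \mathcal{O}(B)$ versus $\mathcal{O}(\G)\twoheadrightarrow \overline{A}\twoheadrightarrow \mathcal{O}(B)$; then I apply $\id\tens\psi$ to the last factor and rewrite the right-hand side via left $\overline{A}$-colinearity of $\psi$. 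Collecting the resulting identity against a $\Bbbk$-basis of $\wedge(W^{A})$ shows that each ``slice'' of $(\id\tens\psi)(\xi)$ satisfies the $\mathcal{O}(B)$-cotensor condition, as required.

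Finally, $T$-equivariance of the resulting injection is immediate from the right $\mathcal{O}(T)$-colinearity of $\psi$, which is part of the content of \eqref{eq:A=A0(tens)W}: the $T$-action on $\ind^{\G}_{\B}(V)$, inherited from the right regular action on $\mathcal{O}(\G)$, is transported to the natural tensor $T$-structure on $\overline{A}\tens \wedge(W^{A})$. The main obstacle is purely bookkeeping: one must carefully track which left/right comodule structure is relevant at each step (the left $\overline{A}$-coaction of $\psi$ for the cotensor translation versus the right $\mathcal{O}(T)$-coaction for $T$-equivariance), and verify the quotient-diagram compatibility relating the four canonical surjections among $\mathcal{O}(\G),\mathcal{O}(\B),\overline{A},\mathcal{O}(B)$; no structural input beyond \eqref{eq:A=A0(tens)W} is needed.
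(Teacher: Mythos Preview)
Your proposal is correct and follows essentially the same route as the paper's proof. The paper phrases it slightly more concisely---it first applies the functor $V\cotens_{\mathcal{O}(B)}\res^{G}_{B}(-)$ to the isomorphism \eqref{eq:A=A0(tens)W} to obtain $V\cotens_{\mathcal{O}(B)} A \cong (V\cotens_{\mathcal{O}(B)} \overline{A})\tens \wedge(W^{A})$, and then simply observes that $V\cotens_{\mathcal{O}(\B)} A \subseteq V\cotens_{\mathcal{O}(B)} A$---but your explicit element-level verification of the cotensor condition and of $T$-equivariance is just an unwinding of the same two steps.
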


\begin{proof}
Taking the functor $V\cotens_{\mathcal{O}(B)}\res^{G}_{B}(-)$ 
to both sides in \eqref{eq:A=A0(tens)W},
we get
$V\cotens_{\mathcal{O}(B)} A \overset{\cong}{\to} 
(V\cotens_{\mathcal{O}(B)} \overline{A}) \tens \wedge(W^A)$.
By the definition of the cotensor,
we see that $V\cotens_{\mathcal{O}(\B)} A$ 
is contained in $V\cotens_{\mathcal{O}(B)} A$.
This completes the proof.
\end{proof}

\begin{prop}\label{prp:dim(ind)<fin}
For a finite-dimensional left $\B$-supermodule $V$,
the induced supermodule $\ind^{\G}_{\B}(V)$ is finite-dimensional.
In particular, $H^0(\lambda)$ is finite for any $\lambda\in\Lambda$.
\end{prop}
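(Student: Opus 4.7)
The plan is to reduce the super-statement to the classical (non-super) result that induced modules $\ind^{G}_{B}(V)$ of finite-dimensional $B$-modules are finite-dimensional, by using the preceding lemma (Lemma~\ref{prp:ind->ind(tens)wedge}) to sandwich $\ind^{\G}_{\B}(V)$ inside a finite product of an ordinary induced module and an exterior algebra.

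First, I would note that $W^{\mathcal{O}(\G)}$ is finitely generated and free by the definition of quasireductivity, so its exterior algebra $\wedge(W^{\mathcal{O}(\G)})$ is finite-dimensional (of dimension $2^{\dim W^{\mathcal{O}(\G)}}$). Second, $\res^{\B}_B(V)$ is finite-dimensional because it coincides with $V$ as a vector space. Third, since $G$ is a split connected reductive group and $B$ a Borel subgroup of $G$, the quotient $G/B$ is a projective variety, and $\mathscr{L}(\res^{\B}_B(V))$ is a coherent sheaf on $G/B$; by the standard theorem $\ind^{G}_{B}(\res^{\B}_B(V)) \cong H^0(G/B,\mathscr{L}(\res^{\B}_B(V)))$ (as in \cite[Part~I, \S5.12]{Jan03}), this is finite-dimensional by standard coherent cohomology of projective schemes. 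Applying Lemma~\ref{prp:ind->ind(tens)wedge} then gives an inclusion of $\ind^{\G}_{\B}(V)$ into a finite-dimensional space, so $\ind^{\G}_{\B}(V)$ is finite-dimensional.

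For the ``in particular'' clause, I would observe that $\uu(\lambda)$ is a simple module over the finite-dimensional Clifford superalgebra $\hy(\T)^\lambda \cong \C(\h_\odd,b^\lambda)$ (Lemma~\ref{prp:C=hy^}), hence is itself finite-dimensional. Therefore $\res^{\T}_{\B}(\uu(\lambda))$ is a finite-dimensional $\B$-supermodule, and applying the first part of the proposition with $V = \res^{\T}_{\B}(\uu(\lambda))$ yields that $H^0(\lambda) \cong \ind^{\G}_{\B}(\res^{\T}_{\B}(\uu(\lambda)))$ is finite-dimensional.

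No real obstacle here once Lemma~\ref{prp:ind->ind(tens)wedge} is in hand; the only subtle point is invoking the classical finiteness of $\ind^{G}_{B}(-)$ for finite-dimensional $B$-modules, which rests on the completeness of $G/B$ and is already implicit in the earlier remark that $\G/\B$ satisfies condition (Q6) of \cite{Bru06}.
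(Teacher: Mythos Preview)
Your proof is correct and follows essentially the same approach as the paper: both invoke Lemma~\ref{prp:ind->ind(tens)wedge} to embed $\ind^{\G}_{\B}(V)$ into $\ind^{G}_{B}(\res^{\B}_{B}(V)) \tens \wedge(W^{\mathcal{O}(\G)})$, then use finite-dimensionality of $W^{\mathcal{O}(\G)}$ together with the classical result that $\ind^{G}_{B}$ of a finite-dimensional $B$-module is finite-dimensional (the paper cites \cite[Part~I, 5.12(c)]{Jan03} directly rather than spelling out the projectivity of $G/B$). Your treatment of the ``in particular'' clause via the Clifford-algebra description of $\uu(\lambda)$ is more explicit than the paper's, which leaves it implicit.
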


\begin{proof}
Since $\mathcal{O}(\G)$ is a finitely generated superalgebra,
$W^{\mathcal{O}(\G)}$ is finite-dimensional
by \cite[Proposition~4.4]{Mas05}.
On the other hand, it is known that $\ind^G_B(V)$ is finite dimensional,
see \cite[Part~I, 5.12(c)]{Jan03} for example.
This claim follows immediately from Lemma~\ref{prp:ind->ind(tens)wedge}.
\end{proof}

Recall that, $G=\Gev$ and $B=\Bev$.
As a non-super version of $H^0(\lambda)$, we set 
\[
H^0_\ev(\lambda):=\ind^{G}_{B}(\Bbbk^\lambda)
\quad \text{and} \quad
\Lambda^+:=\{\lambda\in\Lambda \mid H^0_\ev(\lambda)\not=0\}.
\]
for $\lambda\in\Lambda$.
It is known that $\Lambda^+$ consists of all {\it dominant weights}
of $T$ with respect to $\Delta_\eve^+$,
that is,
$\Lambda^+ = \{
\lambda\in\Lambda\mid \langle\lambda,\alpha^\vee \rangle \geq 0
\text{ for all } \alpha\in\Delta^+_\eve\}$,
see \cite[Part~II, 2.6]{Jan03} for example.
Then by Lemma~\ref{prp:ind->ind(tens)wedge},
we get the following Proposition.

\begin{prop}\label{prp:H^0<H^0_ev(tens)wedge}
For $\lambda\in\Lambda$, there is an inclusion
\[
H^0(\lambda) \hookrightarrow 
H^0_\ev(\lambda)^{\oplus n_\lambda} \tens \wedge (W^{\mathcal{O}(\G)})
\]
of left $T$-modules,
where $n_\lambda:=\dim(\uu(\lambda))$.
\end{prop}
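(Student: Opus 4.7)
The plan is to apply Lemma~\ref{prp:ind->ind(tens)wedge} to the particular $\B$-super\-module $V = \res^{\T}_{\B}(\uu(\lambda))$, since by the identification $H^0(\lambda) \cong R^0\ind^{\G}_{\B}\big(\res^{\T}_{\B}(\uu(\lambda))\big) = \ind^{\G}_{\B}\big(\res^{\T}_{\B}(\uu(\lambda))\big)$ we will immediately obtain
\[
H^0(\lambda) \hookrightarrow \ind^{G}_{B}\big(\res^{\B}_{B}\res^{\T}_{\B}(\uu(\lambda))\big) \tens \wedge(W^{\mathcal{O}(\G)})
\]
as $T$-modules. It then suffices to identify the $B$-module $\res^{\B}_{B}\res^{\T}_{\B}(\uu(\lambda))$ concretely.

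First I would recall the structure of $\res^{\T}_{\B}(\uu(\lambda))$. By construction, this restriction is taken via the Hopf-superalgebra splitting $\spl_{\B}:\mathcal{O}(\T) \hookrightarrow \mathcal{O}(\B)$ of \eqref{eq:section}, which is the composite $\mathcal{O}(\T) \xrightarrow{1 \otimes \id} \mathcal{O}(\T)\cmdblackltimes\mathcal{O}(\U) \cong \mathcal{O}(\B)$. Consequently the $\U$-part of the $\B$-action on $\res^{\T}_{\B}(\uu(\lambda))$ is trivial, so after further restricting along the inclusion $B = T\ltimes U \hookrightarrow \B$, the $B$-action factors through the quotient $B \twoheadrightarrow T$. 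In other words, $\res^{\B}_{B}\res^{\T}_{\B}(\uu(\lambda))$ is just the $T$-module $\res^{\Tev}_{T}(\uu(\lambda))$ viewed as a $B$-module with trivial $U$-action.

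Next I would invoke the Remark following Proposition~\ref{prp:T-simple}, which says that as a $\Tev$-module (equivalently a $T$-module), $\uu(\lambda)$ is isomorphic to $n_\lambda$ copies of $\Bbbk^\lambda$, where $n_\lambda = \dim\uu(\lambda)$. Hence as $B$-modules we have the isomorphism
\[
\res^{\B}_{B}\res^{\T}_{\B}(\uu(\lambda)) \,\,\cong\,\, (\Bbbk^\lambda)^{\oplus n_\lambda}.
\]
Finally, since the induction functor $\ind^{G}_{B}(-)$ preserves finite direct sums,
\[
\ind^{G}_{B}\big((\Bbbk^\lambda)^{\oplus n_\lambda}\big) \,\,\cong\,\, \ind^{G}_{B}(\Bbbk^\lambda)^{\oplus n_\lambda} \,\,=\,\, H^0_\ev(\lambda)^{\oplus n_\lambda},
\]
which combined with the inclusion from Lemma~\ref{prp:ind->ind(tens)wedge} yields the desired embedding.

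The proof is essentially an unpacking of definitions once Lemma~\ref{prp:ind->ind(tens)wedge} is in hand, so there is no real obstacle; the only subtle point worth being careful about is verifying that the two different restrictions $\res^{\B}_{B}\circ\res^{\T}_{\B}$ and $\res^{\Tev}_{T}$ (viewed as a $B$-module via trivial $U$-action) genuinely produce the same $B$-module, which follows from the specific form of $\spl_{\B}$ in \eqref{eq:section}.
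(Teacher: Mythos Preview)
Your proof is correct and follows exactly the approach intended by the paper, which simply states that the proposition follows from Lemma~\ref{prp:ind->ind(tens)wedge}. You have merely made explicit the routine identification $\res^{\B}_{B}\res^{\T}_{\B}(\uu(\lambda)) \cong (\Bbbk^\lambda)^{\oplus n_\lambda}$ that the paper leaves implicit (relying on the remark after Proposition~\ref{prp:T-simple} and the fact, stated in \S4.2, that $\res^{\T}_{\B}(\uu(\lambda))$ consists of copies of $\Bbbk^\lambda$ up to parity change).
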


In particular, we see that the parameter set $\Lambda^\dominant$ 
is included in $\Lambda^+$.

\begin{rem}
Assume that $\Bbbk$ is algebraically closed.
Brundan and Kleshchev \cite[Theorem~6.11]{BruKle03}
determined $\Lambda^\dominant$ explicitly
for queer supergroups $\Q(n)$; $\Lambda^\dominant=X_p^+(n)$ in their notation.
Shu and Wang \cite[Theorem~5.3]{ShuWan08}
describe $\Lambda^\dominant$ combinatorially
for ortho-symplectic supergroups $\SpO(m|n)$;
$\Lambda^\dominant=\mathscr{X}^\dagger(T)$ in their notation.
Recently, Cheng, Shu and Wang \cite{CheShuWan18}
explicitly determined $\Lambda^\dominant$
for simply connected Chevalley groups of type
$D(2|1;\zeta),G(3)$ and $F(3|1)$.
\qed
\end{rem}

A {\it unified} description of the parameter set $\Lambda^\dominant$ 
for all quasireductive supergroups,
as in the case of (non-super) split reductive groups,
is not known.
This is an {\it open problem}.

\section{Quasireductive Supergroups admitting Distinguished Parabolic Super-subgroups}
\label{sec:parab}

Let $\G$ be a quasireductive supergroup over a field $\Bbbk$, as before.
Recall that, $G$ is the even part $\Gev$ of $\G$ and 
$\B$ is the Borel super-subgroup of $\G$
satisfying $\bb^- = \Lie(\B)$.
In this section,
we assume that $\G$ contains a closed super-subgroup $\P$ 
such that 
\begin{equation} \label{eq:parab}
\Pev\,\,=\,\,G
\quad \text{and}
\quad
\Lie(\P)_\odd \,\,=\,\, \bb^-_\odd,
\end{equation}
which we shall call a {\it maximal parabolic super-subgroup} of $\G$.
Note that, the assumption does depend on 
the choice of the homomorphism $\gamma:\mathz\Delta\to \mathr$,
defined in \S\ref{sec:tori,unip}.

\begin{exs}
The following, with a suitable choice of $\gamma$, satisfy the assumptions above.
\begin{enumerate}
\item
General linear supergroups $\GL(m|n)$,
see Example~\ref{ex:GL(m|n)}.
\item
Chevalley supergroups $\G$ of classical type
such that the Lie superalgebra $\g=\Lie(\G)$ of $\G$ is 
a finite-dimensional simple Lie superalgebra of type I.
\item
Periplectic supergroups $\mathbf{P}(n)$, see Example~\ref{ex:P(n)}.
\end{enumerate}
However, queer supergroups $\Q(n)$ do not satisfy the assumption.
\qed
\end{exs}

For simplicity, we also assume that $\h_\odd=0$,
or equivalently, $\T=T$.

\subsection{A $\tens$-Splitting Property}
\label{sec:tens-split}

First, we fix a totally ordered $\Bbbk$-basis $\mathscr{G}=((X_i)_i,\leq)$ of $\g_\odd$
and define the following unit-preserving supercoalgebra map
\[
\iota_{\mathscr{G}} : \wedge(\g_\odd) \longrightarrow \hy(\G);
\quad 
X_{i_1} \wedge X_{i_2} \wedge \cdots \wedge X_{i_r} 
\longmapsto X_{i_1} X_{i_2} \cdots X_{i_r},
\]
where $X_{i_1}<X_{i_2}<\cdots <X_{i_r}$ in $\mathscr{G}$.
The left $T$-supermodule structure on $\g_\odd$
naturally induces a left $T$-supermodule structure on 
the exterior superalgebra $\wedge(\g_\odd)$.
It is easy to see that $\iota_{\mathscr{G}}$ is left $T$-linear.
Then one can retake the isomorphism \eqref{eq:phi} so that
\[
\phi_{\G,\mathscr{G}} : 
\hy(G) \tens \wedge(\g_\odd) \longrightarrow \hy(\G);
\quad u\tens \xi \longmapsto u \cdot \iota_{\mathscr{G}}(\xi).
\]
Moreover, by dualizing $\iota_{\mathscr{G}}$,
we get a counit-preserving left $T$-module superalgebra map
$\pi_{\mathscr{G}} : \mathcal{O}(\G) \to \wedge (W^{\mathcal{O}(\G)})$ satisfying
\[
\langle \iota_{\mathscr{G}}(\xi),\, \eta \rangle
\,\,=\,\,
\langle \xi,\, \pi_{\mathscr{G}}(\eta) \rangle,
\qquad \xi\in \wedge(\g_\odd),\,\, \eta\in\wedge(W^{\mathcal{O}(\G)})
\]
and the isomorphism \eqref{eq:psi} can be chosen as follows.
\[
\psi_{\G,\mathscr{G}} : 
\mathcal{O}(\G) \longrightarrow \mathcal{O}(G) \tens \wedge(W^{\mathcal{O}(\G)});
\quad a \longmapsto \sum_a \overline{a_{(1)}} \tens \pi_{\mathscr{G}}(a_{(2)}),
\]
see \cite[Proposition~22]{Mas12}.
It is easy to see that this $\psi_{\G,\mathscr{G}}$ is also left $T$-linear.

Take $\mathscr{B},\mathscr{U}^+ \subseteq \mathscr{G}$ so that
$\mathscr{G}=\mathscr{B}\sqcup\mathscr{U}^+$ (disjoint union) and
$\mathscr{B}$ (resp. $\mathscr{U}^+$) 
forms a $\Bbbk$-basis of $\bb^-_\odd$ (resp. $\n^+_\odd$).
Then we also get $\pi_{\mathscr{B}} : \mathcal{O}(\B) \to \wedge(W^{\mathcal{O}(\B)})$
and $\pi_{\mathscr{U}^+} : \mathcal{O}(\U^+) \to \wedge(W^{\mathcal{O}(\U^+)})$.
The canonical identification $\g = \bb^- \oplus \uu^+$ induces an isomorphism 
$\wedge(W^{\mathcal{O}(\G)}) \overset{\cong}{\to} 
\wedge(W^{\mathcal{O}(\B)})\tens \wedge(W^{\mathcal{O}(\U^+)})$.
One sees that the following diagram commutes.
\begin{equation} \label{eq:pi_comm}
\begin{xy}
(0,7)   *++{\mathcal{O}(\G)}  ="1",
(60,7)  *++{\mathcal{O}(\B)\tens \mathcal{O}(\U^+)}    ="2",
(0,-7) *++{\wedge(W^{\mathcal{O}(\G)})}          ="3",
(60,-7)*++{\wedge(W^{\mathcal{O}(\B)})\tens \wedge(W^{\mathcal{O}(\U^+)}),}    ="4",
(30,-1)*{\circlearrowleft},
{"1" \SelectTips{cm}{} \ar @{^(->}^{} "2"},
{"1" \SelectTips{cm}{} \ar @{->>}_{\pi_{\mathscr{G}}\,} "3"},
{"2" \SelectTips{cm}{} \ar @{->>}^{\pi_{\mathscr{B}} \tens \pi_{\mathscr{U}^+}} "4"},
{"3" \SelectTips{cm}{} \ar @{->}_{\cong} "4"}
\end{xy}
\end{equation}
where the upper arrow is given by \eqref{eq:inj}.

In the following, 
we regard $\mathcal{O}(\G)$ as a right $\P$-supermodule
via the canonical quotient map
$\varpi_\P:\mathcal{O}(\G) \twoheadrightarrow \mathcal{O}(\P)$,
and regard $\mathcal{O}(\P)$ and $\wedge(W^{\mathcal{O}(\U^+)})$ as left $T$-supermodules, as before.
Then we have the following.

\begin{prop} \label{prp:P-split}
$\mathcal{O}(\G)$ is isomorphic to $\mathcal{O}(\P)\tens \wedge(W^{\mathcal{O}(\U^+)})$
as right $\P$- left $T$-supermodules.
\end{prop}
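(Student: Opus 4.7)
The plan is to combine the $\tens$-splits of $\G$ and $\P$. By \eqref{eq:psi}, we have an isomorphism of left $\mathcal{O}(G)$-comodule superalgebras $\psi_{\G,\mathscr{G}}:\mathcal{O}(\G)\overset{\cong}{\to}\mathcal{O}(G)\tens\wedge(W^{\mathcal{O}(\G)})$; since the hypothesis gives $\Pev=G$ and $\mathscr{B}$ is a basis of $\bb^-_\odd=\Lie(\P)_\odd$, the same construction applied to $\P$ with respect to $\mathscr{B}$ yields $\psi_{\P,\mathscr{B}}:\mathcal{O}(\P)\overset{\cong}{\to}\mathcal{O}(G)\tens\wedge(W^{\mathcal{O}(\P)})$.

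The assumption $\Lie(\P)_\odd=\bb^-_\odd$ together with the root decomposition $\g_\odd=\bb^-_\odd\oplus\n^+_\odd$ gives a $G$-stable splitting $\g_\odd=\Lie(\P)_\odd\oplus\Lie(\U^+)_\odd$. Dualizing via the canonical Hopf pairing \eqref{eq:canopair}, this produces a $G$-equivariant decomposition $W^{\mathcal{O}(\G)}=W^{\mathcal{O}(\P)}\oplus W^{\mathcal{O}(\U^+)}$ and, therefore, a $G$-equivariant superalgebra isomorphism $\wedge(W^{\mathcal{O}(\G)})\cong\wedge(W^{\mathcal{O}(\P)})\tens\wedge(W^{\mathcal{O}(\U^+)})$. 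Splicing these ingredients defines the candidate isomorphism
\[
\Psi:\mathcal{O}(\G)\xrightarrow{\psi_{\G,\mathscr{G}}}\mathcal{O}(G)\tens\wedge(W^{\mathcal{O}(\G)})\cong\mathcal{O}(G)\tens\wedge(W^{\mathcal{O}(\P)})\tens\wedge(W^{\mathcal{O}(\U^+)})\xrightarrow{\psi_{\P,\mathscr{B}}^{-1}\tens\id}\mathcal{O}(\P)\tens\wedge(W^{\mathcal{O}(\U^+)}),
\]
which is a bijection of left $\mathcal{O}(G)$-comodules and preserves $T$-weights, so it is a left $T$-supermodule isomorphism.

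The remaining and main obstacle is to verify that $\Psi$ intertwines the right $\mathcal{O}(\P)$-coaction $(\id\tens\varpi_\P)\circ\Delta_{\mathcal{O}(\G)}$ on the source with the first-factor coaction $\Delta_{\mathcal{O}(\P)}\tens\id$ on the target. The pivotal intermediate step is the analog of diagram~\eqref{eq:pi_comm} with $(\P,\U^+)$ replacing $(\B,\B^+)$: choosing $\mathscr{G}=\mathscr{B}\sqcup\mathscr{U}^+$ with the elements of $\mathscr{B}$ first, the identification $\iota_{\mathscr{G}}|_{\wedge(\bb^-_\odd)}=\iota_{\mathscr{B}}$ inside $\hy(\G)$ dualizes to a commutative square expressing $\varpi_\P$, through the $\tens$-splits, as the projection $\id\tens\varepsilon_{\wedge(W^{\mathcal{O}(\U^+)})}$; equivalently, $(\id\tens\varepsilon)\circ\Psi=\varpi_\P$. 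Combining this identification with the Hopf-quotient property $\Delta_{\mathcal{O}(\P)}\circ\varpi_\P=(\varpi_\P\tens\varpi_\P)\circ\Delta_{\mathcal{O}(\G)}$ and the coassociativity of $\Delta_{\mathcal{O}(\G)}$, the right $\P$-colinearity follows by a direct Sweedler-notation computation; the super-signs arising from the tensor symmetry cancel thanks to the specific left-sided form in which $\Psi$ was assembled.
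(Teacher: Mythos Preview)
Your approach is essentially the paper's, run in the opposite direction around the same commutative diagram. The paper writes the candidate map explicitly as
\[
a\;\longmapsto\;\sum_a \varpi_\P(a_{(1)})\tens(\wedge f)\bigl(\pi_{\mathscr{G}}(a_{(2)})\bigr),
\]
which is manifestly left $\mathcal{O}(\P)$-colinear because it has the shape $(\varpi_\P\tens g)\circ\Deltaa_{\mathcal{O}(\G)}$, and then proves bijectivity by showing that this composite agrees with your $\Psi=(\psi_{\P,\mathscr{B}}^{-1}\tens\id)\circ(\text{iso})\circ\psi_{\G,\mathscr{G}}$. You start from $\Psi$ (manifestly bijective) and argue colinearity afterwards.

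Two cautions. First, your final step is phrased too optimistically: the identity $(\id\tens\varepsilon)\circ\Psi=\varpi_\P$, together with the Hopf-quotient relation and coassociativity, does \emph{not} by itself force $\P$-colinearity of an arbitrary bijection $\Psi$. What one actually needs---and what your commutative square together with the explicit formula $\psi_{\G,\mathscr{G}}(a)=\sum_a\overline{a_{(1)}}\tens\pi_{\mathscr{G}}(a_{(2)})$ and \eqref{eq:pi_comm} do yield after unwinding---is the stronger factorization $\Psi=(\varpi_\P\tens g)\circ\Deltaa_{\mathcal{O}(\G)}$ for a suitable $g$; from that, colinearity is a one-line check. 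This factorization is precisely the content of the paper's diagram. Second, your assertion that $\g_\odd=\bb^-_\odd\oplus\n^+_\odd$ is a $G$-stable splitting is not justified by the standing hypotheses (only $\bb^-_\odd$ is known to be $G$-stable, via the existence of $\P$) and is in any case unnecessary: the proposition asserts only $T$-equivariance on that tensor factor. A minor sidedness point: in the paper's conventions ``right $\P$-supermodule'' means \emph{left} $\mathcal{O}(\P)$-supercomodule, so the relevant coaction on $\mathcal{O}(\G)$ is $(\varpi_\P\tens\id)\circ\Deltaa_{\mathcal{O}(\G)}$ rather than $(\id\tens\varpi_\P)\circ\Deltaa_{\mathcal{O}(\G)}$.
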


\begin{proof}
By the definition of $\P$, 
we also have a counit-preserving
left $\mathcal{O}(G)$- right $\mathcal{O}(T)$-comodule superalgebra isomorphism
$\psi_{\P,\mathscr{B}} : \mathcal{O}(\P) \to \mathcal{O}(G) \tens \wedge(W^{\mathcal{O}(\B)})$
which satisfies
\[
\psi_{\P,\mathscr{B}}(\varpi_\P(a)) \,\,= \,\, 
\sum_a \overline{a_{(1)}} \tens \pi_{\mathscr{B}}(\varpi_{\B}(a_{(2)})),
\qquad a\in \mathcal{O}(\G),
\]
where $\varpi_\B : \mathcal{O}(\G)\to \mathcal{O}(\B)$ is the canonical quotient map.
Then by \eqref{eq:pi_comm}, we get the following commutative diagram
in the category of left $T$-supermodules.
\[
\begin{xy}
(0,10)   *++{\mathcal{O}(\G)}  ="1",
(80,10)  *++{\mathcal{O}(G)\tens \wedge(W^{\mathcal{O}(\G)})}    ="2",
(0,-17) *++{\mathcal{O}(\P) \tens \wedge(W^{\mathcal{O}(\U^+)})}          ="3",
(0,-3) *++{\mathcal{O}(\G) \tens \wedge(W^{\mathcal{O}(\G)})}          ="5",
(80,-17)*++{\mathcal{O}(G)\tens \wedge(W^{\mathcal{O}(\B)}) \tens \wedge(W^{\mathcal{O}(\U^+)}).}    ="4",
(40,-3)*{\circlearrowleft},
{"1" \SelectTips{cm}{} \ar @{->}^{\cong}_{\psi_{\G,\mathscr{G}}} "2"},
{"1" \SelectTips{cm}{} \ar @{->}^{(\id\otimes \pi_{\mathscr{G}}) \circ \Deltaa_{\mathcal{O}(\G)}} "5"},
{"5" \SelectTips{cm}{} \ar @{->}^{\varpi_{\P} \otimes\wedge f} "3"},
{"2" \SelectTips{cm}{} \ar @{->}^{\cong} "4"},
{"3" \SelectTips{cm}{} \ar @{->}_{\psi_{\P,\mathscr{B}}\tens \id}^{\cong} "4"}
\end{xy}
\]
Here, $f:W^{\mathcal{O}(\G)}\to W^{\mathcal{O}(\U^+)}$ is the canonical projection
induced by the inclusion $\mathscr{U}^+\subset \mathscr{G}$.
Thus, we are done.
\end{proof}

Recall that, $G=\Gev$ and $B=\Bev$.
For a left $\B$-supermodule $V$,
it is easy to see that 
the map $\id\tens \varepsilon_{\P} : V\cotens_{\mathcal{O}(\B)} \mathcal{O}(\P) \to V$
is left $B$-linear,
where $\varepsilon_{\P}:\mathcal{O}(\P)\to \Bbbk$ is the counit of $\mathcal{O}(\P)$.
Then by Frobenius reciprocity \eqref{eq:Frob},
we have the following left $G$-module map
\[
\mathcal{N}_V : 
V\cotens_{\mathcal{O}(\B)} \mathcal{O}(\P) \longrightarrow V\cotens_{\mathcal{O}(B)}\mathcal{O}(G);
\quad v\tens p \longmapsto v\tens \overline{p},
\]
where $\overline{p}$ is the canonical image of $p\in \mathcal{O}(\P)$ in $\mathcal{O}(G)$.

\begin{lemm} \label{prp:Zub5.2}
The above 
$\mathcal{N}:\res^{\P}_{G}\ind^{\P}_{\B}(-) \to \ind^{G}_{B}\res^{\B}_{B}(-)$
is a natural isomorphism.
\end{lemm}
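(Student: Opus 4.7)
The plan is to produce an explicit two-sided inverse to $\mathcal{N}_V$ via the $\tens$-splitting property, and then to verify naturality. Since $\Lie(\P)_\odd = \Lie(\B)_\odd$ by assumption, we have $W^{\mathcal{O}(\P)} = W^{\mathcal{O}(\B)}$ as $T$-supermodules; write $W$ for this common space. Exactly as in the proof of Proposition~\ref{prp:P-split}, one obtains compatible $\tens$-split isomorphisms $\psi_{\P,\mathscr{B}} : \mathcal{O}(\P) \overset{\cong}{\to} \mathcal{O}(G) \tens \wedge(W)$ and $\psi_{\B,\mathscr{B}} : \mathcal{O}(\B) \overset{\cong}{\to} \mathcal{O}(B) \tens \wedge(W)$ of left $\mathcal{O}(G)$- (resp.\ $\mathcal{O}(B)$-) comodule superalgebras, fitting into a commutative square with the canonical surjection $\mathcal{O}(\P) \twoheadrightarrow \mathcal{O}(\B)$ on top and $\pi_G \tens \id_{\wedge(W)}$ on the bottom, where $\pi_G:\mathcal{O}(G)\twoheadrightarrow \mathcal{O}(B)$ is the even-part quotient map.

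Using these identifications, I would define for $V\in{}_{\B}\SMod$ a candidate inverse $\mathcal{M}_V: V\cotens_{\mathcal{O}(B)}\mathcal{O}(G) \to V\tens \mathcal{O}(\P)$ by $\sum v_i\tens a_i \longmapsto \sum v_i \tens s(a_i)$, where the section $s:\mathcal{O}(G)\hookrightarrow \mathcal{O}(\P)$ is defined by $a\mapsto \psi_{\P,\mathscr{B}}^{-1}(a\tens 1)$. This $s$ is a superalgebra and left $\mathcal{O}(G)$-comodule section of the even-part quotient $\mathcal{O}(\P)\twoheadrightarrow \mathcal{O}(G)$, and its composition with $\mathcal{O}(\P)\twoheadrightarrow \mathcal{O}(\B)$ factors as $\pi_G$ followed by the analogous section $\mathcal{O}(B) \hookrightarrow \mathcal{O}(\B)$ (by the commutative square above). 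Using these properties one verifies by direct calculation that $\mathcal{M}_V$ lands in $V\cotens_{\mathcal{O}(\B)}\mathcal{O}(\P)$, and that $\mathcal{N}_V \comp \mathcal{M}_V = \id$ (immediate from $\overline{s(a)}=a$).

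The main obstacle is the opposite direction $\mathcal{M}_V \comp \mathcal{N}_V = \id$. Given $w = \sum_i v_i \tens p_i \in V\cotens_{\mathcal{O}(\B)}\mathcal{O}(\P)$, I would use the $\tens$-split to write each $p_i$ uniquely as $\sum_k s(a_{i,k})\cdot \omega_{i,k}$ with $a_{i,k}\in\mathcal{O}(G)$ and $\omega_{i,k}$ in the image of $\wedge(W)\hookrightarrow \mathcal{O}(\P)$. Projecting the cotensor equation along the $\wedge(W)$-factor of $\mathcal{O}(\B) = \mathcal{O}(B)\cmdblackltimes \wedge(W)$, and using the Hopf-crossed-product form of $\Deltaa_{\mathcal{O}(\B)}$, an induction on the total exterior degree (organized by the $T$-weight grading on $W$) forces the $\omega_{i,k}$ of positive exterior degree to contribute trivially. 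This reduces $w$ to $\sum_j v_j\tens s(a_j)$ with $\sum_j v_j\tens a_j \in V\cotens_{\mathcal{O}(B)}\mathcal{O}(G)$, i.e., $w = \mathcal{M}_V(\mathcal{N}_V(w))$. Naturality of $\mathcal{N}$ in $V$ is automatic from the construction, completing the proof.
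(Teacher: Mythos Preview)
Your approach differs from the paper's and has a genuine gap at the decisive step.

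The paper does not attempt an explicit inverse for general $V$. Instead it first checks that $\mathcal{N}_R$ is an isomorphism for the regular comodule $R=\mathcal{O}(\B)$ (and for $\Pi R$): under the right-handed $\tens$-splittings $\mathcal{O}(\B)\cong\wedge(W^R)\tens\mathcal{O}(B)$ and $\mathcal{O}(\P)\cong\wedge(W^R)\tens\mathcal{O}(G)$, the map $\mathcal{N}_R$ identifies with the tensor decomposition of $\mathcal{O}(\P)$ itself. Since every injective in $\SMod^{\mathcal{O}(\B)}$ is a summand of a direct sum of copies of $R$ and $\Pi R$, $\mathcal{N}_V$ is an isomorphism for injective $V$; then both $\res^{\P}_{G}\ind^{\P}_{\B}$ and $\ind^{G}_{B}\res^{\B}_{B}$ are left exact, so an injective presentation $0\to V\to I^0\to I^1$ and the five lemma (this is the content of \cite[Lemma~8.4.5]{ParWan91}) finish the argument for all $V$.

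Your sketch stumbles at two points. First, the claim that $\mathcal{M}_V$ lands in $V\cotens_{\mathcal{O}(\B)}\mathcal{O}(\P)$ is not justified: the cotensor is taken over $\mathcal{O}(\B)$, so you need $(\varpi\tens\id)\Deltaa_{\mathcal{O}(\P)}(s(a))$ to match the full $\mathcal{O}(\B)$-coaction on $V$, not merely its restriction to $\mathcal{O}(B)$. The splitting $\psi_{\P,\mathscr{B}}$ is an isomorphism of left $\mathcal{O}(G)$-comodule superalgebras, not of coalgebras, so $\Deltaa_{\mathcal{O}(\P)}(s(a))$ will in general carry nontrivial $\wedge(W)$-components on the left tensor factor, and the properties you list for $s$ do not control these. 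Second, the ``induction on total exterior degree'' for $\mathcal{M}_V\circ\mathcal{N}_V=\id$ is too vague: the positive-degree exterior parts do not simply vanish, they must be reabsorbed into the $v_j$'s, and making that reabsorption precise is exactly the computation the paper carries out for $V=R$. Once you have it for $R$, the categorical reduction to injectives is both shorter and more robust than repeating a direct argument for each $V$.
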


\begin{proof}
The proof is essentially the same as Zubkov's \cite[Proposition~5.2]{Zub06}.
For simplicity, we write $R:=\mathcal{O}(\B)$, $\overline{R}:=\mathcal{O}(B)$, $P:=\mathcal{O}(\P)$.
Note that, $\overline{A}=\mathcal{O}(G)=\mathcal{O}(\Pev)$.
First, we prove that $\mathcal{N}_{R}$ is an isomorphism.
By the right version of \eqref{eq:psi}, we see that
$R$ is isomorphic to $\wedge(W^{R})\tens \overline{R}$
as a right $\overline{R}$-supercomodule.
Thus, we have
\[
\mathcal{N}_{R} : 
R\cotens_{R} P \longrightarrow R\cotens_{\overline{R}} \overline{A}
\,\,\cong\,\, {}^{\co \overline{R}}R \tens \overline{A}.
\]
Here, ${}^{\co\overline{R}}R$ is the right $\overline{R}$-coinvariant subspace of $R$,
that is, ${}^{\co\overline{R}}R=\Bbbk\cotens_{\overline{R}}R$,
where $\Bbbk$ is regarded as the trivial one-dimensional right $\overline{R}$-comodule.
It is known that ${}^{\co \overline{R}}R$ is isomorphic to $\wedge(W^R)$,
see \cite[Theorem~4.5]{Mas05}.
Then one sees that this map coincides with 
the tensor decomposition $P\cong \wedge(W^{R})\tens \overline{A}$ of $P$,
which is the right version of $\psi_{\P,\mathscr{B}}$,
and hence $\mathcal{N}_{B}$ is an isomorphism.
We see that $\mathcal{N}_{\Pi B}$ is also an isomorphism.

Next, we prove that $\mathcal{N}_V$ is an isomorphism 
for all injective object $V$ in $\SMod^B$.
By \cite[Proposition~3.1]{Zub06}, it is shown that 
$V$ is a direct summand of a direct sum of some copies of $B$ and $\Pi B$.
Thus, $\mathcal{N}_V$ is an isomorphism.
Then by \cite[Lemma~8.4.5]{ParWan91},
we are done.
\end{proof}

The following is a generalization of 
Zubkov's result \cite[Proposition~5.2]{Zub06}.

\begin{thm} \label{prp:R-decomp}
For a left $\B$-supermodule $V$,
there is an isomorphism
\[
R^n \ind^{\G}_{\B}(V) \,\, \cong\,\,
R^n\ind^{G}_{B}(\res^{\B}_{B}(V)) \tens \wedge (W^{\mathcal{O}(\U^+)})
\]
of left $T$-supermodules.
\end{thm}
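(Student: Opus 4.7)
The plan is to factor the induction as $\ind^{\G}_{\B} = \ind^{\G}_{\P} \circ \ind^{\P}_{\B}$ and to combine the tensor-product splitting of Proposition~\ref{prp:P-split} with a derived version of Lemma~\ref{prp:Zub5.2}. For any left $\P$-supermodule $W$, Proposition~\ref{prp:P-split} yields a $T$-equivariant isomorphism, natural in $W$,
\[
\ind^{\G}_{\P}(W) \,=\, W \cotens_{\mathcal{O}(\P)} \mathcal{O}(\G) \,\cong\, W \cotens_{\mathcal{O}(\P)} \bigl(\mathcal{O}(\P) \tens \wedge(W^{\mathcal{O}(\U^+)})\bigr) \,\cong\, W \tens \wedge(W^{\mathcal{O}(\U^+)}).
\]
In particular $\ind^{\G}_{\P}$ is exact (being isomorphic to tensoring with a fixed finite-dimensional superspace over the field $\Bbbk$), so the Grothendieck spectral sequence for the composition degenerates and gives
\[
R^n \ind^{\G}_{\B}(V) \,\cong\, \ind^{\G}_{\P}\bigl(R^n \ind^{\P}_{\B}(V)\bigr) \,\cong\, R^n \ind^{\P}_{\B}(V) \tens \wedge(W^{\mathcal{O}(\U^+)})
\]
as left $T$-supermodules, for every $n\geq 0$.

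Next, I would upgrade Lemma~\ref{prp:Zub5.2} to the derived level and establish $R^n\ind^{\P}_{\B}(V) \cong R^n\ind^{G}_{B}(\res^{\B}_{B}(V))$ as $T$-supermodules. Pick an injective resolution $V \to I^\bullet$ in $\SMod^{\mathcal{O}(\B)}$. Since $\mathcal{N}$ is natural and the lemma shows each $\mathcal{N}_{I^n}$ is an isomorphism, one obtains an isomorphism of complexes of $T$-modules
\[
\ind^{\P}_{\B}(I^\bullet) \,\cong\, \ind^{G}_{B}\bigl(\res^{\B}_{B}(I^\bullet)\bigr),
\]
whose left-hand cohomology is $R^n\ind^{\P}_{\B}(V)$ by definition. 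To identify the right-hand cohomology with $R^n\ind^{G}_{B}(\res^{\B}_{B}(V))$, it suffices to verify that each $\res^{\B}_{B}(I^n)$ is injective (hence $\ind^{G}_{B}$-acyclic) in $\SMod^{\mathcal{O}(B)}$. As recalled in the proof of Lemma~\ref{prp:Zub5.2}, Zubkov's \cite[Proposition~3.1]{Zub06} shows that every injective $\B$-supermodule is a direct summand of a direct sum of copies of $\mathcal{O}(\B)$ and $\Pi\mathcal{O}(\B)$; the right-handed version of \eqref{eq:Bsplit} gives $\res^{\B}_{B}(\mathcal{O}(\B)) \cong \mathcal{O}(B) \tens \wedge(W^{\mathcal{O}(\B)})$, a finite direct sum of copies of the injective $B$-comodule $\mathcal{O}(B)$, and direct summands of injectives are injective.

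Combining the two displayed chains of isomorphisms yields the desired conclusion. The main obstacle I anticipate is the careful bookkeeping of $T$-equivariance throughout — in particular, verifying that the splitting of Proposition~\ref{prp:P-split} remains compatible with the cotensor product $W \cotens_{\mathcal{O}(\P)} (-)$ so that $\wedge(W^{\mathcal{O}(\U^+)})$ splits off as a $T$-stable tensor factor in the correct sense, and confirming that $\mathcal{N}$ is $T$-equivariant (which follows at once from its $G$-equivariance established in the proof of Lemma~\ref{prp:Zub5.2}, since $T \subset G$).
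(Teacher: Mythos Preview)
Your proposal is correct and follows essentially the same route as the paper: factor $\ind^{\G}_{\B}=\ind^{\G}_{\P}\circ\ind^{\P}_{\B}$, use Proposition~\ref{prp:P-split} to see that $\ind^{\G}_{\P}$ is exact and splits off $\wedge(W^{\mathcal{O}(\U^+)})$ as a $T$-equivariant tensor factor, and then pass Lemma~\ref{prp:Zub5.2} to derived functors. The only cosmetic difference is that the paper phrases the second step via exactness of $\res^{\P}_{G}$ (so that $\res^{\P}_{G}\circ R^n\ind^{\P}_{\B}\cong R^n(\ind^{G}_{B}\circ\res^{\B}_{B})$ by \cite[Part~I, 4.1(2)]{Jan03}), whereas you spell out explicitly that $\res^{\B}_{B}$ preserves injectives; both yield the same identification.
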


\begin{proof}
Since $\res^{\P}_{G}(-)$ is exact,
we get $\res^{\P}_{G} \circ R^n \ind^{\P}_{\B}(-) 
\cong R^n(\res^{\P}_{G}\ind^{\P}_{\B}(-))$,
see \cite[Part~I, 4.1(2)]{Jan03}.
Then by Lemma~\ref{prp:Zub5.2}, we see that 
$\res^{\P}_{G}\circ R^n \ind^{\P}_{\B}(-) \cong R^n(\ind^G_B\res^{\B}_B(-))$.
By Proposition~\ref{prp:P-split},
we see that $\mathcal{O}(\G)$ is an injective object
in the category of left $\mathcal{O}(\P)$-supercomodules,
and hence the functor $\ind^\G_\P(-)$ is exact.
Thus, again by \cite[Part~I, 4.1(2)]{Jan03}, we get
$\ind^\G_\P\circ R^n(\ind^\P_\B(-)) \cong R^n\ind^\G_\B(-)$,
since $\ind^\G_\B(-)=\ind^\G_\P\ind^\P_\B(-)$.
On the other hand,
for any right $\mathcal{O}(\P)$-supercomodule $Y$, we get
an isomorphism
\[
\ind^\G_\P (Y)\,\,=\,\, Y\cotens_{\mathcal{O}(\P)}\mathcal{O}(\G)
\,\,\cong\,\, Y \tens \wedge(W^{\mathcal{O}(\U^+)})
\]
of left $\mathcal{O}(T)$-supercomodules
by Proposition~\ref{prp:P-split}.
Thus, we are done.
\end{proof}

\subsection{Some Applications}
\label{sec:final-appl}

As before, we set 
$H^n_\ev(\lambda) := R^n\ind^G_B(\res^T_B (\Bbbk^\lambda))$
for $\lambda\in\Lambda$ and $n\in\mathz_{\geq0}$.
By Theorem~\ref{prp:R-decomp},
we get the following theorem.

\begin{thm} \label{prp:main}
For $\lambda\in\Lambda$ and $n\geq 0$,
there is an isomorphism 
\[
H^n(\lambda) \,\,\cong \,\,
H^n_\ev(\lambda) \tens \wedge(W^{\mathcal{O}(\U^+)})
\]
of left $T$-supermodules.
In particular, we have $\Lambda^\dominant = \Lambda^+$.
\end{thm}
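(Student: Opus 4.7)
The plan is to obtain Theorem~\ref{prp:main} as an immediate specialization of Theorem~\ref{prp:R-decomp}. Under the standing hypothesis $\h_\odd=0$, equivalently $\T=T$, Proposition~\ref{prp:u(lambda)} identifies $\uu(\lambda)$ with $\Bbbk^\lambda$ as a $T$-supermodule, so $\res^{\T}_{\B}(\uu(\lambda))=\res^{T}_{\B}(\Bbbk^\lambda)$; transitivity of restriction through $T\subseteq B\subseteq \B$ then gives $\res^{\B}_{B}(\res^{T}_{\B}(\Bbbk^\lambda))=\res^{T}_{B}(\Bbbk^\lambda)$.

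Feeding $V=\res^{\T}_{\B}(\Bbbk^\lambda)$ into Theorem~\ref{prp:R-decomp}, the left-hand side is $R^n\ind^{\G}_{\B}(V)=H^n(\lambda)$ by definition, while the first tensor factor on the right-hand side is $R^n\ind^{G}_{B}(\res^{T}_{B}(\Bbbk^\lambda))=H^n_\ev(\lambda)$, again by definition. This delivers the desired isomorphism
\[
H^n(\lambda)\,\cong\,H^n_\ev(\lambda)\tens\wedge(W^{\mathcal{O}(\U^+)})
\]
of left $T$-supermodules. The asserted equality $\Lambda^\dominant=\Lambda^+$ is then read off at $n=0$: since $\wedge(W^{\mathcal{O}(\U^+)})$ is a nonzero $\Bbbk$-superspace (it contains $1$ in degree zero) and $\Bbbk$ is a field, vanishing of $H^0(\lambda)$ is equivalent to vanishing of $H^0_\ev(\lambda)$, so $\lambda\in\Lambda^\dominant$ if and only if $\lambda\in\Lambda^+$.

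There is no genuine obstacle left at this stage of the paper: all of the substantive content, including the cosmash-product splitting $\mathcal{O}(\G)\cong\mathcal{O}(\P)\tens\wedge(W^{\mathcal{O}(\U^+)})$ of Proposition~\ref{prp:P-split} and the derived-functor bookkeeping involving $\ind^{\G}_{\P}$ and $\ind^{\P}_{\B}$, has already been invested in Theorem~\ref{prp:R-decomp}. The only thing to check during the write-up is that the $T$-supermodule structure produced on $\wedge(W^{\mathcal{O}(\U^+)})$ is indeed the natural one coming from the adjoint action, but this is transparent from the construction of $\psi_{\G,\mathscr{G}}$ and $\pi_{\mathscr{U}^+}$ used in the proof of Theorem~\ref{prp:R-decomp}.
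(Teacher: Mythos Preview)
Your proposal is correct and follows essentially the same approach as the paper: the paper simply states that Theorem~\ref{prp:main} follows by Theorem~\ref{prp:R-decomp}, and you have supplied exactly the specialization that makes this work (using $\h_\odd=0$ so that $\uu(\lambda)=\Bbbk^\lambda$, and reading off $\Lambda^\dominant=\Lambda^+$ from the $n=0$ case).
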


This seems to be a certain dual version of the notion of {\it Kac modules}
(cf. \cite[\S2.2~(b)]{Kac77-3}).
As a corollary, we get 
a super-analogue of the Kempf vanishing theorem
\cite[Part~II, Proposition~4.5]{Jan03}.

\begin{cor} \label{prp:cor1}
For $\lambda\in\Lambda^+$ and $n\geq 1$, we have $H^n(\lambda)=0$.
\end{cor}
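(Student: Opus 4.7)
The plan is essentially to read off the statement from Theorem~\ref{prp:main}. That theorem produces an isomorphism
\[
H^n(\lambda) \,\,\cong\,\, H^n_\ev(\lambda) \tens \wedge(W^{\mathcal{O}(\U^+)})
\]
of left $T$-supermodules for every $\lambda \in \Lambda$ and every $n \geq 0$, so vanishing of the left-hand side is equivalent to vanishing of $H^n_\ev(\lambda)$ (since $\wedge(W^{\mathcal{O}(\U^+)})$ is a non-zero finite-dimensional exterior superalgebra, tensoring by it kills nothing and creates nothing).

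Thus the entire argument reduces to a single citation: for $\lambda \in \Lambda^+$ and $n \geq 1$, the classical Kempf vanishing theorem \cite[Part~II, Proposition~4.5]{Jan03} applied to the split connected reductive group $G = \Gev$ gives $H^n_\ev(\lambda) = R^n\ind^G_B(\Bbbk^\lambda) = 0$. Substituting into the displayed isomorphism yields $H^n(\lambda) = 0$, which is the claim.

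There is essentially no obstacle to this step; the substantive work has already been absorbed into Theorem~\ref{prp:main}, whose proof in turn rests on the $\tens$-splitting of $\mathcal{O}(\G)$ over $\mathcal{O}(\P)$ (Proposition~\ref{prp:P-split}) and on the fact that the induction $\ind^\G_\P(-)$ is exact. In this sense, the role of the distinguished parabolic $\P$ is precisely to reduce the super-cohomology to the classical (even) cohomology, at which point Kempf vanishing for the reductive group $G$ does all of the remaining work.
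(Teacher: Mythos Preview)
Your proposal is correct and matches the paper's approach exactly: the corollary is stated immediately after Theorem~\ref{prp:main} with no separate proof, the only implicit step being the classical Kempf vanishing theorem \cite[Part~II, Proposition~4.5]{Jan03}, which you have identified and cited correctly.
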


The result above is first proved by 
Zubkov \cite[Theorem~5.1]{Zub06} for $\G=\GL(m|n)$
with the {\it standard} Borel super-subgroup.
Later he also gave
a partial generalization of the Kempf vanishing theorem for 
$\G=\GL(m|n)$
with all Borel super-subgroups \cite[Proposition~13.3]{Zub16}
(i.e., for all homomorphism $\gamma:\mathz\Delta\to \mathr$).

\medskip

Set $\rho_\eve:= \frac12 \sum_{\alpha\in\Delta_\eve^+}\alpha$
in $\Lambda\tens_\mathz\mathq$,
and 
\[
\Lambda^+_p
:=
\begin{cases}
\Lambda^+
& \text{if } p=0,\\
\{\lambda\in\Lambda^+ \mid 0\leq \langle \lambda+\rho_\eve,\, \beta^\vee \rangle\leq p,
\,\,\forall \beta\in\Delta_\eve^+ \}
& \text{if } p>2.
\end{cases}
\]
Here, we put $p:=\charr(\Bbbk)$.
Then for $\lambda\in\Lambda^+_p$,
it is known that the induced $G$-module $H^0_\ev(\lambda)$ is simple,
that is, $H^0_\ev(\lambda)=L_\ev(\lambda):=\soc_G(H^0_\ev(\lambda))$.
See \cite[Part II, 5.5]{Jan03}.
Thus, in this case, we see that
$H^0(\lambda)$ is isomorphic to the direct sum of $2^{\dim\g_\odd}$-copies of 
$L_\ev(\lambda)$ as $T$-modules.

\begin{rem}
For the case of $\G=\GL(m|n)$,
Marko \cite{Mar18} gave a necessary and sufficient condition
for the induced $\G$-supermodule $H^0(\lambda)$ to be simple,
see also Zubkov \cite[Proposition~12.10]{Zub16}.
\qed
\end{rem}

Let $\mathz\Lambda$ be the group algebra of $\Lambda$ over $\mathz$,
and let $\{\e^\lambda \mid \lambda\in \Lambda\}$ be the 
standard basis of $\mathz\Lambda$.
For a finite-dimensional $T$-supermodule $M$,
we let $\ch(M)$ denote the {\it formal character} of $M$,
that is,
$\ch(M) = \sum_{\lambda \in\Lambda} \dim(M^\lambda) \e^\lambda$.
Set
\[
A(\mu)\,\,:=\,\,\sum_{w\in \W(G,T)} (-1)^{\ell(w)} \, \e^{w\mu},
\]
where $\mu\in\Lambda\tens_\mathz \mathq$ and
$\W(G,T)$ is the Weyl group of $G=\Gev$ with respect to $T$.
As a super-analogue of the Weyl character formula,
we get the following 
description of the Euler characteristic for $\lambda\in\Lambda^+$.

\begin{cor} \label{prp:cor2}
For $\lambda\in\Lambda^+$,
we have
\[
\ch(H^0(\lambda))
\,\,=\,\,
\frac{A(\lambda+\rho_\eve)}{A(\rho_\eve)}
\cdot
\prod_{\delta\in\Delta^+_\odd}(1+\e^{-\delta}).
\]
\end{cor}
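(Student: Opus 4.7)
The plan is to leverage Theorem~\ref{prp:main} at $n=0$, which supplies a $T$-supermodule isomorphism
\[
H^0(\lambda) \,\cong\, H^0_\ev(\lambda) \tens \wedge(W^{\mathcal{O}(\U^+)}),
\]
and then to take formal characters of both sides, using that $\ch$ is multiplicative with respect to tensor products of $T$-supermodules. This reduces the corollary to computing the two character factors on the right separately.

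For $\ch(H^0_\ev(\lambda))$, I would invoke the classical Weyl character formula for the split, connected reductive $\Bbbk$-group $G=\Gev$ with split maximal torus $T$ (see \cite[Part~II, 5.10]{Jan03}), which for any $\lambda\in\Lambda^+$ yields
\[
\ch(H^0_\ev(\lambda)) \,=\, \frac{A(\lambda+\rho_\eve)}{A(\rho_\eve)}.
\]
This supplies the first factor on the right of the asserted identity.

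For $\ch(\wedge(W^{\mathcal{O}(\U^+)}))$, I would argue that, as a $T$-module, $W^{\mathcal{O}(\U^+)}$ is the $\Bbbk$-linear dual of $\Lie(\U^+)_\odd = \n^+_\odd$: indeed, the Hopf pairing \eqref{eq:Hopf-pairing} is $T$-equivariant and restricts to a perfect pairing between $W^{\mathcal{O}(\U^+)}$ and $\n^+_\odd$. The weight decomposition $\n^+_\odd = \bigoplus_{\delta\in\Delta^+_\odd}\g_\odd^\delta$ therefore transfers dually to $W^{\mathcal{O}(\U^+)}$, placing it in weights $-\delta$ (with multiplicity $\dim\g_\odd^\delta$). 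The standard character identity for an exterior algebra on a purely odd weight space then gives $\prod_{\delta\in\Delta^+_\odd}(1+\e^{-\delta})$, with the convention that each $\delta$ enters with its root multiplicity. Multiplying the two characters produces the claimed formula. The main conceptual work has already been absorbed by Theorem~\ref{prp:main}, so no serious obstacle remains; the only point worth checking carefully is the $T$-equivariance of the duality $W^{\mathcal{O}(\U^+)}\cong(\n^+_\odd)^*$, which is immediate from the construction of $\Lie(\G)$ and its sub-Lie superalgebras.
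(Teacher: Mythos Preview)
Your proposal is correct and follows essentially the same route as the paper's proof: both invoke Theorem~\ref{prp:main} at $n=0$, apply the classical Weyl character formula \cite[Part~II, 5.10]{Jan03} to $H^0_\ev(\lambda)$, identify $W^{\mathcal{O}(\U^+)}$ with $(\n^+_\odd)^*$ as $T$-modules, and compute the character of the exterior algebra. Your explicit remark about root multiplicities in $\Delta^+_\odd$ is a worthwhile caution, since the paper notes elsewhere that odd root spaces need not be one-dimensional.
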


\begin{proof}
By the Weyl character formula for $G$,
it is known that the formal character of 
$H^0_\ev(\lambda)$ is given by $A(\lambda+\rho_\eve)/A(\rho_\eve)$,
see \cite[Part~II, 5.10]{Jan03}.
On the other hand,
by definition,
$W^{\mathcal{O}(\U^+)}$ coincides with the dual $(\n^+_\odd)^*$ of $\n^+_\odd$ 
as right $T$-modules.
The formal character of $\wedge(\n^+_\odd)^*$ is given by
$\prod_{\delta\in\Delta^+_\odd}(1+\e^{-\delta})$.
Thus, we are done.
\end{proof}

Set $\rho_\odd:= \frac12 \sum_{\delta\in\Delta_\odd^+}\delta$
and $\rho:=\rho_\eve-\rho_\odd$
in $\Lambda\tens_\mathz\mathq$.
Assume that $w \rho_\odd=\rho_\odd$ for all $w\in \W(G,T)$.
Then for each $\lambda\in\Lambda^+$, we have
\begin{equation}
\ch(H^0(\lambda))
\,\,=\,\,
A(\lambda+\rho)
\cdot
\frac{\prod_{\delta\in\Delta^+_\odd}(\e^{\delta/2}+\e^{-\delta/2})}{
A(\rho_\eve)
}.
\end{equation}
This assumption is satisfied if
$\g=\Lie(\G)$ is a simple Lie superalgebra of type~I 
or $\gl(m|n)$.
For the case of $\G=\GL(m|n)$,
see \cite[Corollary~13.4]{Zub16}.

\subsection{Some Examples} \label{sec:ex}
In this section, we shall see some examples.

\begin{ex} \label{ex:GL(m|n)}
Let us consider the case of $\GL(m|n)$.
In our setting, we shall see Corollary~5.4~in~\cite{Zub06}.
As usual, we choose a split maximal torus $T$ of 
$\GL(m|n)_\ev \cong \GLL_m\times \GLL_n$ so that
\[
T(R)
\,=\, \{\,\big(\mathrm{diag}(x_1,\dots,x_m),\, \mathrm{diag}(x_{m+1},\dots,x_{m+n})\big) 
\in \GLL_m(R)\times\GLL_n(R)\,\},
\]
where $R$ is a commutative algebra.
We may identify $\Lambda = \bigoplus_{i=1}^{m+n} \mathz \lambda_i$.
Then we have $\Lie(\GL(m|n))=\gl(m|n)=\Mat_{m|n}(\Bbbk)$
and
$\W(\GL(m|n)_\ev,T) \cong \mathfrak{S}_m\times \mathfrak{S}_n$,
where $\mathfrak{S}_m$ is the symmetric group on $m$ letters.
The root system of $\GL(m|n)$ with respect to $T$ is 
$\Delta=\{\lambda_i-\lambda_j \in\Lambda \mid 1\leq i\not=j\leq m+n\}$
and 
\[
\Delta_\eve \,\, =\,\,
\{\lambda_i-\lambda_j \mid 1\leq i\not=j\leq m
\,\text{ or }\, m+1\leq i\not=j\leq m+n\}.
\]
Note that, $\Delta=\Delta_\eve \sqcup \Delta_\odd$ (disjoint union).
Define $\gamma:\mathz\Delta\to \mathr$ so that 
$\gamma(\lambda_i):=-i$ for each $1\leq i\leq m+n$.
Then one sees that
$\Delta^{\pm}=\{\pm(\lambda_i-\lambda_j) \mid 1\leq i<j\leq m+n\}$
and the Borel super-subgroup $\B$ (resp. $\B^+$) of $\GL(m|n)$ 
consists of lower (resp. upper) triangular matrices.
In this case, the maximal parabolic super-subgroup of $\GL(m|n)$ is given as
\[
\P(R) \,\, = \,\, \{ \,
\left(\begin{array}{c|c} X & 0 \\ \hline Z & W \end{array}\right) \in \GL(m|n)(R)
\,\}
\]
where $R$ is a commutative superalgebra.
By Theorem~\ref{prp:main}, we see that
\[
\Lambda^\dominant \,\,=\,\,\Lambda^+\,\,=\,\, 
\{ \sum_{i=1}^{m+n} d_i\lambda_i \in\Lambda \mid 
d_1\geq \cdots \geq d_m \text{ and }
d_{m+1}\geq \cdots \geq d_{m+n} \}.
\]
For $\lambda=\sum_{i=1}^{m+n}d_i\lambda_i \in\Lambda^\dominant$,
we shall write $\lambda_+:=\sum_{i=1}^{m}d_i\lambda_i$ and 
$\lambda_-:=\sum_{i=m+1}^{m+n}d_i\lambda_i$.
Then we have 
$H^0_\ev(\lambda)\cong H^0_{\GLL_m}(\lambda_+)\tens H^0_{\GLL_n}(\lambda_-)$,
see \cite[Part~I, Lemma~3.8]{Jan03}.
Here, $H^0_{\GLL_m}(\lambda_+)$ is the induced $\GLL_m$-module for $\Bbbk^{\lambda_+}$.

Let $t_1,\dots,t_{m+n}$ be indeterminates.
As an element of $\mathz[t_1^\pm,\dots,t_{m+n}^\pm]$,
\[
s_{\lambda_+}(t_1,\dots,t_m)
\,\,:=\,\,
\det(t_i^{d_j+m-j})_{1\leq i,j\leq m}\,\, 
\Big/
\prod_{1\leq i<j\leq m}(t_i-t_j)
\]
is the so-called {\it Schur polynomial}.
It is well-known that 
the formal character of $H^0_{\GLL_m}(\lambda_+)$
is given by $A(\lambda_+ +\rho_\eve)/A(\rho_\eve) = 
s_{\lambda_+}(\e^{\lambda_1},\dots,\e^{\lambda_m})$.
Then by Corollary~\ref{prp:cor2},
we see that the formal character of $H^0(\lambda)$ is given as
\[
s_{\lambda_+}(t_1,\dots,t_m)
\cdot s_{\lambda_-}(t_{m+1},\dots ,t_{m+n})
\cdot \underset{m+1\leq j\leq m+n}{\prod_{1\leq i \leq m}}(1+\frac{t_j}{t_i}).
\]
Here, we have replaced each $\e^{\lambda_i}$ with $t_i$.
\qed
\end{ex}

\begin{rem}
We consider the case of $\GL(2|1)$.
Let $T$ be the standard torus of $\Gev$ as before.
If we choose $\gamma:\mathz\Delta\to \mathr$ so that 
$\gamma(\lambda_1)>\gamma(\lambda_3)>\gamma(\lambda_2)$,
then
$\Delta^{-} =
\{ -(\lambda_1-\lambda_2), \, \lambda_2-\lambda_3,\, -(\lambda_1-\lambda_3) \}$
and the corresponding Borel super-subgroup $\B$ of $\GL(2|1)$ is given by
\[
\B(R)\,\,=\,\,
\{
\left(\begin{array}{cc|c}
* & 0 & 0 \\ * & * & * \\ \hline * & 0 & *
\end{array}\right) \in \GL(2|1)(R)
\},
\]
where $R$ is a commutative superalgebra.
In this case, one easily sees that 
$\GL(2|1)$ does not have a maximal parabolic super-subgroup,
that is, there is no such a super-subgroup $\P$ of $\GL(2|1)$ 
satisfying the condition \eqref{eq:parab}.

Recently, Zubkov \cite{Zub16} studied general properties of 
the cohomology 
$H^n(\lambda)=H^n(\GL(m|n)/\B, \mathcal{L}(\Bbbk^\lambda))$
for {\it all} Borel super-subgroups $\B$ of $\GL(m|n)$.
\qed
\end{rem}

For a superalgebra $R$,
the {\it super-transpose} of $A\in\Mat_{m|n}(R)$ is given as
\[
{}^{\mathrm{st}}\!A \,\,:=\,\,
\left(\begin{array}{c|c}
{}^t\! X & {}^t\! Z \\ \hline -{}^t Y & {}^t W
\end{array}\right),
\quad\text{ where }\,\,
A = \left(\begin{array}{c|c} X & Y \\ \hline Z & W \end{array}\right).
\]
Here, ${}^t\! X$ denotes the ordinary transpose of the matrix $X$.

\begin{ex} \label{ex:P(n)}
For $n\geq 2$, 
we consider the following closed super-subgroup $\mathbf{P}(n)$ of $\GL(n|n)$.
For a commutative superalgebra $R$,
\[
\mathbf{P}(n)(R)\,\,:=\,
\{ g\in\GL(n|n)(R) \mid {}^\mathrm{st}g\, J_n\, g = J_n \},
\]
where
$J_n := \left(\begin{array}{c|c} 0 & I_n \\ \hline I_n & 0 \end{array}\right)$
and $I_n$ is the identity matrix of size $n$.
This is a quasireductive supergroup whose even part is $\mathbf{P}(n)_\ev \cong \GLL_n$.

We fix a split maximal torus $T$ of $\mathbf{P}(n)_\ev \cong \GLL_n$ as
the subgroup of all diagonal matrices.
We may identify $\Lambda = \bigoplus_{i=1}^{n} \mathz \lambda_i$ and 
$\W(\mathbf{P}(n)_\ev,T) = \mathfrak{S}_n$.
It is easy to see that the Lie superalgebra of $\mathbf{P}(n)$ is given as
\[
\Lie(\mathbf{P}(n))
\,\,=\,\,
\{ A\in\Mat_{n|n}(\Bbbk) \mid 
{}^\mathrm{st}\!A J_n + J_n A  = 0 \}.
\]
This is the so-called {\it periplectic Lie superalgebra}, 
see \cite[\S2.1.3]{Kac77} and \cite[\S1.1.5]{CheWan12}.
For this reason, we shall call $\mathbf{P}(n)$ the {\it periplectic supergroup}.

Then the root system of $\mathbf{P}(n)$ with respect to $T$ is given as
follows.
\[
\Delta\,\,=\,\,
\{\,
\underbrace{\pm(\lambda_i-\lambda_j)}_{\in\, \Delta_\eve},\,\,
\underbrace{\pm(\lambda_i+\lambda_j),\,\,
2\lambda_p}_{\in\, \Delta_\odd}
\mid 1\leq i<j\leq n,\,\,1\leq p\leq n \}.
\]
Define $\gamma:\mathz\Delta\to \mathr$ so that 
$\gamma(\lambda_i):=-i$ for each $1\leq i\leq n$.
Then we see that 
$\Delta^+ = \{ \lambda_i-\lambda_j,\,\,-(\lambda_i+\lambda_j) \mid 
1\leq i< j\leq n\}$
and
$\Delta^- = \{ -(\lambda_i-\lambda_j),\,\,\lambda_p+\lambda_q \mid 
1\leq i<j\leq n,\,\,1\leq p\leq q\leq n\}$.
In particular, one sees that $-\Delta^+ \not=\Delta^-$.
In this case, the Borel subgroup $B$ (resp. $B^+$) of 
$\mathbf{P}(n)_\ev \cong \GLL_n$
consists of all lower (resp. upper) triangular matrices.
As a closed super-subgroup of $\mathbf{P}(n)(R)$
(where $R$ is a commutative superalgebra),
one sees that
\begin{eqnarray*}
\B(R) 
&=&
\{
\left(\begin{array}{c|c} X & 0 \\ \hline Z & {}^tX^{-1} \end{array}\right)
\mid
X\in B(R_\eve),\,\,
{}^t \! Z X = -{}^t \! X Z
\},
\\
\B^+(R) 
&=&
\{
\left(\begin{array}{c|c} X & Y \\ \hline 0 & {}^t X^{-1} \end{array}\right)
\mid
X\in B^+(R_\eve),\,\,
{}^t \! X^{-1}\, {}^t Y = Y X^{-1}
\},
\\
\P(R)
&=&
\{
\left(\begin{array}{c|c} X & 0 \\ \hline Z & {}^t X^{-1} \end{array}\right)
\mid
X\in \GLL_n(R_\eve),\,\,
{}^t \! Z X = -{}^t \! X Z
\}.
\end{eqnarray*}

By Theorem~\ref{prp:main}, we see that
$\Lambda^\dominant = \Lambda^+ = 
\{ \sum_{i=1}^{n} d_i\lambda_i \in\Lambda \mid 
d_1\geq \cdots \geq d_n \}$.
For $\lambda\in\Lambda^\dominant$,
the formal character of $H^0(\lambda)$ is given as follows.
\[
s_{\lambda}(t_1,\dots,t_n)
\cdot \prod_{1\leq i<j \leq n}(1+\frac{1}{t_i t_j}).
\]
Here, we have replaced each $\e^{\lambda_i}$ with $t_i$.
\qed
\end{ex}

\begin{rem}
As we mentioned before,
the Borel super-subgroup $\B$,
the induced supermodule $H^0(\lambda)$,
the order $\leq$ on $\Lambda$ given in \eqref{eq:order},
and
the notion of {\it maximal} $T$-weight (see Proposition~\ref{prp:maxwt}),
and 
the existence of a maximal parabolic super-subgroup $\P$
{\bf do}~depend on the choice of the homomorphism $\gamma:\mathz\Delta\to \mathr$,
defined in \S\ref{sec:tori,unip}.
\qed
\end{rem}

\appendix

\renewcommand{\theequation}{A.\arabic{equation}}
\setcounter{equation}{0}

\section{Some General Properties of Superspaces} \label{sec:app_mod}

In this appendix,
$\Bbbk$ is a field of characteristic not equal to $2$.

\subsection{Supermodules}

Let $A$ be a superalgebra.
We consider the (ordinary) crossed product algebra $A\rtimes \Bbbk\mathz_2$
via the non-trivial action of the algebra $\Bbbk\mathz_2$ on $A$.
By definition, the multiplication of $A\rtimes \Bbbk\mathz_2$ is given as follows.
\[
(a\rtimes \epsilon)(b\rtimes \eta) \,\, = \,\, 
a(b_0+(-1)^{\epsilon}b_1) \rtimes (\epsilon+\eta),
\]
where $\epsilon,\eta\in\mathz_2$, $a,b=b_0+b_1\in A$ with  
$b_0\in A_\eve$, $b_1\in A_\odd$.

The category of all left $A$-supermodules ${}_A\mathsf{SMod}$ is, by definition,
the category of left $A$- right $\Bbbk\mathz_2$-Hopf modules ${}_A\mathsf{Mod}^{\Bbbk\mathz_2}$.
Since the characteristic of $\Bbbk$ is not $2$,
there is a unique Hopf algebra isomorphism $\Bbbk\mathz_2 \cong (\Bbbk\mathz_2)^*$.
By using the isomorphism, we get the following equivalence of tensor categories.
\begin{equation} \label{eq:SMod->Mod}
{}_A\mathsf{SMod} \overset{\approx}{\longrightarrow} {}_{A\rtimes \Bbbk\mathz_2}\mathsf{Mod};
\quad M\longmapsto M.
\end{equation}
Here, the left $A\rtimes \Bbbk\mathz_2$-module structure on $M$ is given by
\[
(a\rtimes \epsilon) . m \,\, = \,\, a.(m_0 +(-1)^\epsilon m_1),
\]
where $a\in A$, $\epsilon\in\mathz_2$ and $m=m_0+m_1\in M$ with 
$m_0\in M_\eve$, $m_1\in M_\odd$.

A non-zero superalgebra is said to be {\it simple}
if it has no non-trivial two-sided super-ideal.
Here, the notion of a two-sided super-ideal is defined in an obvious way.

\begin{ex} \label{ex:Mat}
For natural numbers $m,n$ and an algebra $R$ (not necessarily commutative),
we let $\Mat_{m|n}(R)$ denote the set of all 
$(m+n)\times (m+n)$ matrices over $R$.
This naturally forms a superalgebra whose grading is given as follows.
\begin{align*}
\Mat_{m|n}(R)_\eve
&=
\{ 
\left(\begin{array}{c|c} X & 0 \\ \hline 0 & W \end{array}\right)
\mid 
X \in\Mat_m(R) ,\,\, W \in \Mat_n(R)\},
\\
\Mat_{m|n}(R)_\odd
&=
\{ 
\left(\begin{array}{c|c} 0 & Y \\ \hline Z & 0 \end{array}\right)
\mid 
Y \in\Mat_{m,n}(R) ,\,\, Z\in \Mat_{n,m}(R)\},
\end{align*}
where $\Mat_m(R)$ (resp. $\Mat_{m,n}(R)$)
is the set of all $m\times m$ (resp. $m\times n$) matrices over $R$.
For simplicity,
we let $\Mat_{m|0}(R)$ denote the ordinary matrix algebra $\Mat_m(R)$.
\qed
\end{ex}

\begin{prop} \label{prp:simple_alg}
Let $A$ be a simple superalgebra.
If $A_\odd \not=0$, then the algebra $A_\eve$ is 
Morita equivalent to $A\rtimes \Bbbk\mathz_2$.
\end{prop}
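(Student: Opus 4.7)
The plan is to exhibit a \emph{full} idempotent $e \in B := A\rtimes \Bbbk\mathz_2$ such that the corner $eBe$ is isomorphic to $A_\eve$; by the standard idempotent form of Morita's theorem (if $e^2=e\in B$ satisfies $BeB = B$, then $eBe$ and $B$ are Morita equivalent, with the bimodules $Be$ and $eB$ implementing the equivalence), this will give the desired conclusion.

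Let $\sigma$ denote the non-trivial element of $\mathz_2$, regarded as an element of $B$. Since $\charr(\Bbbk)\neq 2$, the elements $e := \tfrac12(1+\sigma)$ and $f := \tfrac12(1-\sigma)$ are orthogonal idempotents in $B$ summing to $1$. A short calculation using $\sigma a \sigma^{-1} = a_\eve - a_\odd$ for $a = a_\eve + a_\odd \in A$ shows that in the Peirce decomposition of $B$ one has $eBe = A_\eve e$, and the map $A_\eve \to eBe$, $a\mapsto ae$, is an algebra isomorphism. Under the equivalence \eqref{eq:SMod->Mod}, left multiplication by $e$ corresponds to the projection $M\mapsto M_\eve$ onto the even part of a supermodule, so this corner is precisely what we want.

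The crucial step---and the main obstacle---is verifying $BeB = B$. This is where simplicity of $A$ and the hypothesis $A_\odd \neq 0$ both enter. The two-sided graded ideal of $A$ generated by $A_\odd$ is easily seen to be $A_\odd + A_\odd^2$; by simplicity this must equal $A$, which forces $A_\odd^2 = A_\eve$. Combined with two elementary identities in $B$, namely $ae = fa$ for $a \in A_\odd$ and $fc = cf$ for $c \in A_\eve$ (both verified by tracking $\sigma$-signs), one finds that for any $a,b \in A_\odd$ the product $(ae)\cdot b = fab = (ab)f$ lies in $BeB$. Writing $1 \in A_\eve = A_\odd^2$ as a finite sum $\sum_i a_i b_i$ with $a_i,b_i \in A_\odd$ then yields $f \in BeB$, hence $1 = e+f \in BeB$, and so $BeB = B$. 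The Morita equivalence between $A_\eve$ and $A\rtimes \Bbbk\mathz_2$ now follows as outlined above; at the level of module categories, it sends a left $B$-module $M$ to its even part $eM = M_\eve$.
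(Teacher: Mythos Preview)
Your proof is correct and essentially self-contained. Both your argument and the paper's begin with the same key observation: the super-ideal $A_\odd A_\odd \oplus A_\odd$ is nonzero, so simplicity forces $A_\odd A_\odd = A_\eve$, i.e.\ the $\mathz_2$-grading is \emph{strongly graded}. From there the two arguments diverge. The paper interprets ``strongly graded'' as saying that $A$ is a right $\Bbbk\mathz_2$-Hopf--Galois extension of $A_\eve$, and then invokes Hopf--Galois theory (as a black box, citing \cite{Mon93}) to obtain the equivalence ${}_{A_\eve}\Mod \approx {}_A\SMod$, $V\mapsto A\otimes_{A_\eve}V$; composing with \eqref{eq:SMod->Mod} finishes. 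You instead work directly inside $B=A\rtimes\Bbbk\mathz_2$, exhibiting the idempotent $e=\tfrac12(1+\sigma)$, identifying the corner $eBe\cong A_\eve$, and using $A_\odd^2=A_\eve$ to show $e$ is full.

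Your route is more elementary---it avoids any appeal to Hopf--Galois machinery and makes the Morita bimodules $Be$, $eB$ completely explicit. The paper's route, while quicker to state, imports more theory; on the other hand it places the result in a broader framework and immediately gives the inverse functor $V\mapsto A\otimes_{A_\eve}V$ appearing in \eqref{eq:Mod->SMod}, which the paper uses just afterward to translate the parity-change functor $\Pi$. The two equivalences agree: your $M\mapsto eM=M_\eve$ is precisely the quasi-inverse to the paper's $V\mapsto A\otimes_{A_\eve}V$.
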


\begin{proof}
Since $A_\odd A_\odd \oplus A_\odd$ is a non-zero super-ideal of $A$,
the $\mathz_2$-grading of $A$ is {\it strongly graded},
that is, $A_\odd A_\odd = A_\eve$.
This means that $A$ over $A_\eve$ is a {\it right $\Bbbk\mathz_2$-Galois},
see \cite[\S8]{Mon93}.
Then by Hopf-Galois theory, we get the following equivalence.
\begin{equation} \label{eq:Mod->SMod}
{}_{A_\eve} \mathsf{Mod} \overset{\approx}{\longrightarrow} {}_A\mathsf{Mod}^{\Bbbk\mathz_2}
\,(={}_A \mathsf{SMod});
\quad V \longrightarrow A\otimes_{A_\eve} V.
\end{equation}
By combining the equivalence above with \eqref{eq:SMod->Mod},
the claim follows.
\end{proof}

In the situation as above, 
the parity change functor $\Pi: {}_A \mathsf{SMod}\to {}_A \mathsf{SMod}$
can be translated into the following functor.
\[
{}_{A_\eve} \mathsf{Mod} \longrightarrow {}_{A_\eve} \mathsf{Mod};\quad
V\longmapsto A_\odd \otimes_{A_\eve} V.
\]

\subsection{Supercomodules} \label{sec:supercomodules}
Let $C$ be a supercoalgebra.
As a dual of $A\rtimes \Bbbk\mathz_2$,
we can consider the (ordinary) {\it cosmash product} coalgebra
$\Bbbk\mathz_2 \cmdblackltimes C$ of $\Bbbk\mathz_2$ and $C$
which is $\Bbbk\mathz_2\tens C$ as a vector space.
The comultiplication and the couinit of $\Bbbk\mathz_2 \cmdblackltimes C$ 
are respectively given as follows.
\[
\epsilon \cmdblackltimes c \longmapsto 
\sum_c (\epsilon\cmdblackltimes c_{(1)}) \tens 
\big((\epsilon+|c_{(1)}|)\cmdblackltimes c_{(2)} \big),
\quad
\epsilon \cmdblackltimes c \longmapsto \varepsilon_C(c),
\]
where $\epsilon\in\mathz_2$ and $c\in C_\eve \cup C_\odd$.
Here, $\varepsilon_C:C\to\Bbbk$ is the counit of $C$.
Then as a dual of \eqref{eq:SMod->Mod}, we get
\begin{equation} \label{eq:coSMod->Mod}
\SMod^{C} \overset{\approx}{\longrightarrow} \Mod^{\Bbbk\mathz_2 \cmdblackltimes C};
\quad V \longmapsto V.
\end{equation}
Here, the right $\Bbbk\mathz_2 \cmdblackltimes C$-comodule structure on $V$ is given by
\[
v \longmapsto \sum_v v_{(0)} \tens (|v_{(0)}| \cmdblackltimes v_{(1)}),
\]
where $v\in V$ and $v\mapsto \sum_v v_{(0)}\tens v_{(1)}$ is 
the given right $C$-supercomodule structure on $V$.

For a right $C$-supercomodule $V$,
we let $\soc_C(V)$ denote the sum of all simple $C$-super-subcomodules of $V$.
In particular, $\corad(C):=\soc_C(C)$ is called the {\it coradical} of $C$.
The identification \eqref{eq:coSMod->Mod} ensures that
any simple supercomodules are finite dimensional, for example.
Moreover, we get the following lemma.

\begin{lemm} \label{prp:simple_lemma}
Suppose that $C$ is a Hopf superalgebra.
Then for a right $C$-supercomodule $V$,
$V\not=0$ if and only if $\soc_C(V)\not=0$.
\end{lemm}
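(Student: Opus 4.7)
The forward direction, that $\soc_C(V) \neq 0$ forces $V \neq 0$, is immediate from the definition. For the converse my plan is to exhibit, inside any non-zero $V \in \SMod^C$, a simple $C$-super-subcomodule; this will automatically lie in $\soc_C(V)$ and witness that it is non-zero.

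The first step will be a super-analogue of the fundamental theorem of comodules. I would pick a non-zero homogeneous element $v \in V$ and write the coaction as $\rho_V(v) = \sum_i v_i \tens c_i$, where the sum is finite and the $v_i$, $c_i$ may be taken homogeneous. A routine coassociativity argument then shows that the $C$-super-subcomodule of $V$ generated by $v$ is contained in the $\Bbbk$-span of the $v_i$, so it is a non-zero finite-dimensional super-subcomodule $V_0$ of $V$. An equivalent and perhaps cleaner route would be to transport the problem through the equivalence \eqref{eq:coSMod->Mod} to the ordinary category $\Mod^{\Bbbk\mathz_2 \cmdblackltimes C}$ and invoke the classical fundamental theorem there, noting that the equivalence matches simple $C$-super-subcomodules of $V$ with simple $\Bbbk\mathz_2 \cmdblackltimes C$-subcomodules.

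Inside such a finite-dimensional non-zero super-subcomodule $V_0$, the descending chain condition on $C$-super-subcomodules holds trivially by dimension, so I can choose a non-zero $C$-super-subcomodule $L \subseteq V_0$ of minimal $\Bbbk$-dimension. By minimality, $L$ admits no proper non-zero $C$-super-subcomodule, which is precisely the statement that $L$ is simple in $\SMod^C$, giving $L \subseteq \soc_C(V)$. I do not anticipate a serious obstacle: the only point to double-check is that the generation and minimality steps stay within $\mathz_2$-graded subobjects, but this is automatic since $v$ is chosen homogeneous and the coaction is parity-preserving. The Hopf superalgebra hypothesis is in fact not used in this argument — the statement holds for any supercoalgebra $C$ — but it is harmless and is presumably included because that is the setting in which the lemma will be applied.
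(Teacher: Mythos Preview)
Your proposal is correct, and the second route you sketch --- transporting through the equivalence \eqref{eq:coSMod->Mod} to the ordinary coalgebra $\Bbbk\mathz_2 \cmdblackltimes C$ and invoking the classical fundamental theorem of comodules there --- is precisely the argument the paper has in mind: the lemma is stated without proof, immediately after the sentence ``The identification \eqref{eq:coSMod->Mod} ensures that any simple supercomodules are finite dimensional, for example. Moreover, we get the following lemma.'' Your direct super-version of the fundamental theorem works just as well, and your observation that the Hopf hypothesis is superfluous is accurate.
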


A Hopf superalgebra $U$ is said to be {\it irreducible}
if $\corad(U)$ is trivial, or equivalently,
the simple $U$-supercomodules are exhausted by the 
purely even or odd trivial $U$-comodule $\Bbbk$,
see \cite[Definition~2]{Mas12}.
For a right $U$-supercomodule $V$,
\[
V^{\co U} \,\, := \,\, 
\{v \in V \mid \sum_v v_{(0)}\tens v_{(1)} = v \tens 1\}
\]
is called the {\it $U$-coinvariant super-subspace} of $V$.

\begin{lemm} \label{prp:simple_lemma2}
Let $V$ be a right $U$-supercomodule.
Then $V^{\co U} = \soc_U(V)$.
In particular, $V\not=0$ if and only if $V^{\co U}\not=0$.
\end{lemm}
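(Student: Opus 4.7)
The plan is to establish the two inclusions $V^{\co U} \subseteq \soc_U(V)$ and $\soc_U(V) \subseteq V^{\co U}$ separately, after which the ``in particular'' clause follows at once from Lemma~\ref{prp:simple_lemma} applied to the Hopf superalgebra $U$.

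For the first inclusion, I would observe that the right coaction $\rho_V : V \to V \tens U$ is a morphism in $\SMod$ and hence preserves the $\mathz_2$-grading, so $V^{\co U}$ is a super-subspace of $V$ and is therefore spanned by its homogeneous elements. For any such homogeneous $v \in V^{\co U}$, the line $\Bbbk v$ is a one-dimensional super-subcomodule on which the coaction acts trivially as $v \mapsto v \tens 1$. Being one-dimensional and homogeneous, $\Bbbk v$ is a simple $U$-super-subcomodule of $V$, and so $v \in \soc_U(V)$. Summing over homogeneous generators gives $V^{\co U} \subseteq \soc_U(V)$.

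For the reverse inclusion, I would invoke the second characterization of irreducibility stated in the definition preceding the lemma: since $U$ is irreducible, the only simple $U$-supercomodules up to isomorphism are the purely even $\Bbbk$ and its parity shift $\Pi \Bbbk$, both of which carry the trivial coaction $w \mapsto w \tens 1$. Thus, if $W$ is any simple super-subcomodule of $V$, then the restricted coaction on $W$ is trivial under any isomorphism with $\Bbbk$ or $\Pi\Bbbk$, so $W \subseteq V^{\co U}$. Taking the sum over all simple super-subcomodules of $V$ yields $\soc_U(V) \subseteq V^{\co U}$. Combining the two inclusions gives the equality $V^{\co U} = \soc_U(V)$, and then Lemma~\ref{prp:simple_lemma} supplies the remaining equivalence $V \neq 0 \iff V^{\co U} \neq 0$.

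I do not anticipate any genuine obstacle: the proof is a direct unpacking of the definitions of the coinvariant super-subspace and of irreducibility. The only point deserving a moment's care is the observation that $V^{\co U}$ inherits a $\mathz_2$-grading from $V$, so that its elements decompose into homogeneous pieces still lying in $V^{\co U}$; without this the line-by-line argument in the first paragraph would not immediately produce \emph{super}-subcomodules of $V$.
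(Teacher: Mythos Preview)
Your argument is correct. The paper itself states Lemma~\ref{prp:simple_lemma2} without proof, so there is no proof in the paper to compare against; your two-inclusion argument, using the characterization of irreducibility given just before the lemma and then invoking Lemma~\ref{prp:simple_lemma} for the final clause, is exactly the intended unpacking of the definitions.
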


\subsection{Cotensors} \label{app:cotens}

Let $C,D$ be supercoalgebras and let $f:C\to D$ be a supercoalgebra map.
For a right $C$-supercomodule $V$ with $\rho:V\to V\tens C$ its structure map,
the map
$\rho|_D : V\overset{\rho}{\longrightarrow} V\tens C 
\overset{\id\tens f}{\longrightarrow} V\tens D$
makes $V$ into a right $D$-supercomodule.
We shall denote it by $\res^C_D(V)$,
called the {\it restriction functor}.
Then $\res^C_D(-)$ becomes a functor from $\SMod^C$ to $\SMod^D$.
For a left $C$-supercomodule $V'$ with $\rho':V'\to C\tens V'$ its structure map,
the {\it cotensor product} $V\cotens_C V'$ of $V$ and $V'$ is given by
\[
V\cotens_C V' 
\,\,:=\,\,
\mathrm{Ker}(
V\tens V' \overset{\rho\tens\id-\id\tens \rho'}{\longrightarrow} V\tens C\tens V'
).
\]
This is a super-subspace of $V\tens V'$.
It is easy to see that $V\cotens_C C\cong V$ and $C\cotens_C V'\cong V'$ as superspaces.
For a Hopf superalgebra $H$, 
the $H$-coinvariant superspace $V^{\co H}$ 
of a right supercomodule $V$ is nothing but $V\cotens_H \Bbbk$,
where $\Bbbk$ is regarded as the trivial right $H$-supercomodule.

We regard $C$ as a left $D$-supercomodule whose structure map is 
given by $(f\tens\id)\circ\Deltaa_C$,
where $\Deltaa_C:C \to C\tens C$ is the comultiplication of $C$.
For a right $D$-supercomodule $W$, 
one easily sees $\ind^C_D(W) := W \cotens_D C$
forms a right $C$-supercomodule
whose structure map is given by $\id\tens \Deltaa_C$.
In this way, we get a (left exact) functor $\ind^C_D(-)$ from $\SMod^D$ to $\SMod^C$,
called the {\it induction functor}.

In the super-situation, the following {\it Frobenius reciprocity} also holds:
For a right $D$-supercomodule $W$ and a left $C$-supercomodule $V$,
we have an isomorphism
\begin{equation} \label{eq:FrobApp}
\underline{\SMod}^D(\res^C_D(V),W) \,\,\overset{\cong}{\longrightarrow} \,\,
\underline{\SMod}^C(V,\ind^C_D(W))
\end{equation}
of superspaces,
which is given by $f\mapsto (f\tens\id_C)\circ \rho$,
where $\rho:V\to V\tens C$ is the $C$-supercomodule structure of $V$. 
Since $\ind^C_D(-)$ is right adjoint to $\res^C_D(-)$,
we see that the functor $\ind^C_D(-)$ preserves injective objects.

\renewcommand{\theequation}{B.\arabic{equation}}
\setcounter{equation}{0}

\section{Clifford Superalgebras} \label{sec:clif}

In this Appendix,
we review some basic facts and notations of Clifford superalgebras
over a field $\Bbbk$ of characteristic not equal to $2$.
Set $\Bbbk^\times:=\Bbbk\setminus\{0\}$.
For details, see 
Wall \cite{Wal64},
Musson \cite[A.3.2]{Mus12} and
Lam \cite{Lam05}.

\subsection{Structures of Clifford Superalgebras}

Let $V$ be a finite-dimensional vector space.
Set $r:=\dim V$.
For a symmetric bilinear form $b:V\times V \to \Bbbk$ on $V$,
the pair $(V,b)$ is called a {\it quadratic space}.
Let $I(V,b)$ be the two-sided ideal of 
the tensor algebra $T(V)$ generated by 
all $xy+yx-b(x,y)$,
where $x,y\in V$.
Set 
\[
\C(V,b) \,\,:=\,\, T(V)/I(V,b).
\]
For simplicity, we sometimes write $\C(V)$ instead of $\C(V,b)$.
This forms a superalgebra over $\Bbbk$ with each element in $V$ regards as odd,
and is called the {\it Clifford superalgebra for $(V,b)$}.
Since the characteristic of $\Bbbk$ is not $2$,
we can choose an orthogonal basis $\{x_1,\dots, x_r\}$ of $V$ with respect to $b$,
that is, a basis of $V$ satisfying $b(x_i,x_j)=0$ if $i\not=j$.
For $1\leq i \leq r$, set
\begin{equation} \label{eq:delta_i}
\delta_i \,\, := \,\, b(x_i,x_i).
\end{equation}
Then one sees that the superalgebra $\C(V,b)$ is generated by 
the odd elements $x_1,\dots,x_r$
with the relations
\[
x_i^2-\delta_i;
\qquad
x_jx_k+x_kx_j,
\]
where $1\leq i\leq r$ and $1\leq j<k\leq r$.
One sees that $\C(V,b)_\epsilon$ is
a $2^{r-1}$-dimensional space for each $\epsilon\in\mathz_2$,
and hence $\C(V,b)$ is $2^r$-dimensional.
If $b$ is trivial (i.e., $b=0$), then $\C(V,b)$ is 
the so-called {\it exterior superalgebra} $\wedge(V)$ on $V$.

The {\it orthogonal sum} $(V,b)\perp(V',b')$ 
(or $V\perp V'$, for short)
of two quadratic spaces $(V,b)$ and $(V',b')$
is a quadratic space
whose underlying space is the direct sum $V\oplus V'$
and the bilinear form is given as follows.
\[
\big((v,v'),\,(w,w')\big) \longmapsto b(v,w) + b'(v',w'),
\]
where $v,w\in V$ and $v',w'\in V'$.
There is an isomorphism of superalgebras
\begin{equation} \label{eq:perp}
\C (V\perp V')
\overset{\cong}{\longrightarrow} 
\C(V) \otimes \C(V')
\end{equation}
given by $(v,v') \mapsto v\otimes 1 + 1 \otimes v'$,
where $v\in V$, $v'\in V'$.

The {\it radical} of the quadratic space $(V,b)$ is defined as follows.
\begin{equation} \label{eq:radical}
\rad(b) 
\,\,:= \,\,
\{ v\in V  \mid b(v,w)=0 \text{ for all } w\in V \}.
\end{equation}

We say that a quadratic space $(V,b)$ is {\it non-degenerate} if $\rad(b)=0$.

\medskip

Suppose that $(V,b)$ is non-degenerate.
We define the {\it signed determinant} of $V$ as follows.
\begin{equation} \label{eq:delta}
\delta \,\, := \,\,
(-1)^{r(r-1)/2} \, \delta_1 \delta_2 \cdots \delta_r.
\end{equation}
It is known that 
the Clifford superalgebra $\C(V,b)$ is central simple over $\Bbbk$
in the category of superalgebras.
In addition, if $r=\dim V$ is even,
then $\C(V,b)$ is still central simple over $\Bbbk$
in the category of algebras.
Moreover, in this case (i.e., $r$ is even), the following structure theorem is known.
For more details, see \cite[Chapter~V, \S2]{Lam05}.
\begin{itemize}
\item
If $\delta\not\in (\Bbbk^\times)^2$,
then the algebra $\C(V,b)_\eve$ is central simple over $\Bbbk(\sqrt{\delta})$.
\item
If $\delta \in (\Bbbk^\times)^2$,
then the superalgebra $\C(V,b)$ is isomorphic to $\Mat_{n|n}(D)$,
where $n\in\mathbb{N}$ and
$D$ is a central division algebra over $\Bbbk$.
\end{itemize}
Here, $\Bbbk(\sqrt{\delta})$ is the quadratic field extension of $\Bbbk$, and
\begin{equation} \label{eq:k^2}
(\Bbbk^\times)^2 \,\,:=\,\, \{ a\in\Bbbk \mid 
a = b^2 \text{ for some } b\in\Bbbk^\times \}.
\end{equation}

\subsection{Simples of Clifford Superalgebras}
In the following, 
we let $(V,b)$ be a quadratic space (not necessarily non-degenerate).
Then there is a non-degenerate subspace $V_s$ of $V$ 
such that $V=\rad(b) \perp V_s$.
Let $\delta$ be the sign determinant of $V_s$.
For simplicity, if $b=0$, then we let $\delta:=0$.
The following fact is well-known,
see Chevalley \cite[Chapter~II, 2.7]{Che97} for example.
For convenience of the reader, we shall give a proof here.

\begin{lemm} \label{prp:Cliff-Jac}
The (ordinary) Jacobson radical $J$ of the algebra $\C(V)$ coincides with
the two-sided ideal of $\C(V)$ generated by $\rad(b)$,
and the quotient (ordinary) algebra $\C(V)/J$ is isomorphic to $\C(V_s)$.
\end{lemm}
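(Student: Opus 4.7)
The plan is to exhibit a concrete candidate for $J$, verify it is nilpotent with a semisimple quotient, and then invoke the classical characterization of the Jacobson radical of a finite-dimensional algebra as its largest nilpotent ideal.

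First, I would exploit the orthogonal decomposition $V = \rad(b) \perp V_s$ together with the isomorphism \eqref{eq:perp}; since $b$ vanishes identically on $\rad(b)$, the Clifford superalgebra $\C(\rad(b))$ coincides with the exterior superalgebra $\wedge(\rad(b))$, yielding a superalgebra isomorphism
\[
\varphi \,:\, \C(V) \,\overset{\cong}{\longrightarrow}\, \wedge(\rad(b)) \tens \C(V_s).
\]
Let $I \subseteq \C(V)$ be the two-sided ideal generated by $\rad(b)$. Under $\varphi$ the ideal $I$ is identified with $\wedge^+(\rad(b)) \tens \C(V_s)$, where $\wedge^+(\rad(b))$ denotes the augmentation ideal of the exterior algebra. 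Since $\wedge^+(\rad(b))^{s+1} = 0$ for $s := \dim \rad(b)$, the ideal $I$ is nilpotent, hence $I \subseteq J$. Composing $\varphi$ with the augmentation $\wedge(\rad(b)) \twoheadrightarrow \Bbbk$ in the first tensor factor then yields an isomorphism $\C(V)/I \cong \C(V_s)$ of ordinary algebras.

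The remaining step, which I regard as the main obstacle, is to show that $\C(V_s)$ is semisimple as an ordinary algebra, since this will force $J \subseteq I$. For $\dim V_s$ even, I would invoke the structure theorem quoted just before this lemma: if the signed determinant $\delta$ of $V_s$ lies in $(\Bbbk^\times)^2$, then $\C(V_s) \cong \Mat_{n|n}(D)$ as a superalgebra, hence $\cong \Mat_{2n}(D)$ as an ordinary algebra, which is simple; if $\delta \notin (\Bbbk^\times)^2$ then $\C(V_s)_\eve$ is central simple over the quadratic extension $\Bbbk(\sqrt{\delta})$, and a short Wedderburn-type inspection identifies $\C(V_s)$ itself as either a simple algebra or a direct product of two isomorphic simple algebras over $\Bbbk(\sqrt{\delta})$. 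For $\dim V_s$ odd, I would adjoin an anisotropic line $\Bbbk e$ with $b(e,e) \neq 0$: by \eqref{eq:perp}, $\C(V_s \perp \Bbbk e) \cong \C(V_s) \tens \C(\Bbbk e)$ with $\C(\Bbbk e) \cong \Bbbk[t]/(t^2 - b(e,e))$ a two-dimensional \'etale (and in particular faithfully flat, semisimple) $\Bbbk$-algebra, so semisimplicity of the left-hand side from the even-dimensional case descends to $\C(V_s)$. Alternatively, one may directly cite the classical fact (see Lam \cite{Lam05}) that the Clifford algebra of a non-degenerate quadratic form over a field of characteristic not $2$ is always semisimple as an ordinary algebra.

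With $I$ a nilpotent two-sided ideal and $\C(V)/I \cong \C(V_s)$ semisimple, the standard characterization of the Jacobson radical of a finite-dimensional algebra as its largest nilpotent ideal (equivalently, the smallest ideal whose quotient is semisimple) forces $J = I$, and both assertions of the lemma follow.
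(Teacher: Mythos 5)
Your proposal is correct and follows essentially the same strategy as the paper: exploit the decomposition $\C(V)\cong \C(\rad(b))\tens\C(V_s)$, show that the ideal $I$ generated by $\rad(b)$ sits inside the Jacobson radical, identify $\C(V)/I\cong\C(V_s)$, and conclude by semisimplicity of $\C(V_s)$. The one genuine difference is in how you place $I$ inside $J$: the paper argues elementwise via quasi-regularity, computing $(xu)^2=0$ for $x\in\rad(b)$ and $u\in\C(V)$ (using $xux = x^2(u_0-u_1)=0$) so that $1-xu$ is invertible, hence $\rad(b)\subseteq J$; you instead identify $I$ under the tensor isomorphism with $\wedge^+(\rad(b))\tens\C(V_s)$ and observe directly that this is a nilpotent ideal. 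Your route is slightly more structural and avoids the quasi-regularity computation, while the paper's avoids having to describe the ideal explicitly on the other side of the isomorphism; both work.

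One small inaccuracy worth flagging: in the even-dimensional non-degenerate case, $\C(V_s)$ is \emph{always} central simple over $\Bbbk$ as an ordinary algebra, regardless of whether $\delta\in(\Bbbk^\times)^2$ --- the paper states this just before the lemma. Your hedge that when $\delta\notin(\Bbbk^\times)^2$ it might be ``a direct product of two isomorphic simple algebras'' conflates this with the odd-dimensional case (or with the structure of $\C(V_s)_\eve$). This does not affect your conclusion, since semisimplicity is all you use, but the cleanest move is exactly the citation you mention at the end: the Clifford algebra of a non-degenerate form over a field of characteristic $\neq 2$ is semisimple.
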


\begin{proof}
For simplicity,
we let $\mathfrak{r}$ be the two-sided ideal of $\C(V)$ generated by $\rad(b)$.
We shall identify $\C(V_s)$ as a subalgebra of $\C(V)$
via the isomorphism $\C(V)\cong \C(\rad(b))\tens \C(V_s)$.
Then $\mathfrak{r}+\C(V_s)$ forms a subalgebra of $\C(V)$.
Since $V=\rad(b)\perp V_s$,
we see that $\mathfrak{r}+\C(V_s) = \C(V)$, and hence we have
\[
\C(V)/\mathfrak{r} \,\, \cong \,\,
\C(V_s) / (\mathfrak{r} \cap \C(V_s)).
\]

Fix $x\in\rad(b)$.
For each $u=u_0+u_1\in\C(V)$ with 
$u_0\in \C(V)_\eve$ and $u_1\in\C(V)_\odd$,
we see that
$(xu)^2 = xuxu = x^2(u_0-u_1)u = 0$, since $x^2=0$.
Thus, we get that $1-xu$ is invertible in $\C(V)$, and hence $\rad(b) \subseteq J$.
In particular, we have $\mathfrak{r} \subseteq J$.
Thus, $\mathfrak{r} \cap \C(V_s)$ is a nil-ideal of $\C(V_s)$.
Since the Jacobson radical of $\C(V_s)$ is trivial,
we get $\C(V)/\mathfrak{r}\cong \C(V_s)$
and $\mathfrak{r}=J$.
\end{proof}

To find simple supermodules,
the following lemma is useful.

\begin{lemm} \label{prp:Clif-lem}
There exists a $(1+\dim V)$-dimensional quadratic space $(V',b')$
such that the category of $\C(V)$-supermodules
is equivalent to 
the category of $\C(V')$-modules.
\end{lemm}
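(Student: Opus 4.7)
The plan is to invoke the general fact recorded in \eqref{eq:SMod->Mod}, namely that for any superalgebra $A$ the category ${}_A\SMod$ is equivalent to the category ${}_{A\rtimes\Bbbk\mathz_2}\Mod$ of ordinary modules over the crossed product. Applied to $A=\C(V,b)$, this reduces the lemma to constructing a quadratic space $(V',b')$ with $\dim V'=1+\dim V$ together with an isomorphism of ordinary algebras
\[
\Phi\,:\,\C(V',b')\,\,\overset{\cong}{\longrightarrow}\,\,\C(V,b)\rtimes\Bbbk\mathz_2.
\]

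First, I would take $V':=V\oplus\Bbbk e$ with $b'$ extending $b$ by $b'(e,V)=0$ and $b'(e,e):=c$, where $c\in\Bbbk^\times$ is the scalar that forces the generator $e\in\C(V',b')$ to square to $1$ (so $c=1$ under the convention $x_i^2=\delta_i$ used in the excerpt). Setting $\tau:=1\rtimes\odd\in\C(V,b)\rtimes\Bbbk\mathz_2$, a direct calculation from the multiplication rule given in Appendix~A yields
\[
\tau^2\,\,=\,\,1, \qquad \tau x + x\tau\,\,=\,\,0 \quad\text{for every } x\in V\subseteq\C(V,b)_\odd.
\]
Because the images $x_i\rtimes\eve$ continue to satisfy their original Clifford relations inside $\C(V,b)\rtimes\Bbbk\mathz_2$, the assignment $\Phi(x_i):=x_i\rtimes\eve$ and $\Phi(e):=\tau$ respects all defining relations of $\C(V',b')$ and therefore extends uniquely to an algebra homomorphism~$\Phi$.

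Finally, I would check that $\Phi$ is an isomorphism by a dimension count: both algebras have $\Bbbk$-dimension $2^{1+\dim V}$ (for the crossed product this is $2\cdot\dim\C(V,b)$), and $\Phi$ is surjective because $V\cup\{\tau\}$ generates $\C(V,b)\rtimes\Bbbk\mathz_2$. Chaining the equivalence \eqref{eq:SMod->Mod} with the pullback of modules along $\Phi$ then delivers
\[
{}_{\C(V,b)}\SMod \,\,\approx\,\, {}_{\C(V,b)\rtimes\Bbbk\mathz_2}\Mod \,\,\overset{\Phi^*}{\cong}\,\, {}_{\C(V',b')}\Mod,
\]
as required. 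I do not foresee any genuine obstacle: the only delicate point is the scalar~$c$, which is controlled entirely by the paper's normalisation of the Clifford relation $xy+yx=b(x,y)$ and by the hypothesis $\charr\Bbbk\neq2$, and thus is a harmless bookkeeping matter.
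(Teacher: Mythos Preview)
Your proof is correct and follows essentially the same strategy as the paper: both invoke the equivalence \eqref{eq:SMod->Mod} and then exhibit an algebra isomorphism $\C(V,b)\rtimes\Bbbk\mathz_2\cong\C(V',b')$ where $V'=V\perp\Bbbk e$ with $e^2=1$. The only cosmetic difference is that the paper factors this isomorphism through the graded tensor decomposition $\C(V)\otimes\C(\Bbbk e)\cong\C(V\perp\Bbbk e)$ of \eqref{eq:perp}, whereas you write down the map $\Phi$ directly and verify the Clifford relations by hand.
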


\begin{proof}
For simplicity, we set $A:=\C(V,b)$.
Let $\Bbbk x$ be a one-dimensional vector space over $\Bbbk$ with a base $x$,
and let $b_x: \Bbbk x \times \Bbbk x \to\Bbbk$ 
be a symmetric bilinear form defined by $b_x(x,x)=1$.
Then we have the following isomorphism of (ordinary) algebras.
\[
A\rtimes \Bbbk\mathz_2 
\overset{\cong}{\longrightarrow}
A\otimes \C(\Bbbk x, b_x);
\quad
a\rtimes \epsilon \longmapsto a\otimes x^\epsilon.
\]
where $a\in A$ and $\epsilon\in\mathz_2$.
Here, the tensor product $A \otimes \C(\Bbbk x ,b_x)$ is $\mathz_2$-graded, as before.
Set $(V',b') := (V,b) \perp (\Bbbk x, b_x)$.
Then we have an isomorphism of (ordinary) algebras
$A\rtimes \Bbbk\mathz_2  \,\, \cong \,\, \C (V',b')$.
Hence, by the equivalence \eqref{eq:SMod->Mod},
the category of all left $A$-supermodules 
can be identified with
the category of all left $\C(V',b')$-modules.
\end{proof}

\begin{prop} \label{prp:Clif-simples}
There is a unique simple left $\C(V,b)$-supermodule $\mathfrak{u}$
up to isomorphism and parity change.
If (1) $\delta=0$ or (2) $\dim V$ is even and $\delta\in (\Bbbk^\times)^2$,
then $\mathfrak{u} \not= \Pi \mathfrak{u}$.
Otherwise, $\mathfrak{u} = \Pi \mathfrak{u}$.
\end{prop}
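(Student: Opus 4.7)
The plan is to reduce the problem, via Lemmas~\ref{prp:Cliff-Jac} and~\ref{prp:Clif-lem}, to the classical structure theory of (ordinary) Clifford algebras on non-degenerate quadratic spaces. First, I would observe that every simple left $\C(V,b)$-supermodule, viewed as a simple $\C(V,b)\rtimes \Bbbk\mathz_2$-module via \eqref{eq:SMod->Mod}, is annihilated by the (ordinary) Jacobson radical $J$ of $\C(V,b)$: the ideal $J\rtimes \Bbbk\mathz_2$ is nilpotent in $\C(V,b)\rtimes\Bbbk\mathz_2$ with semisimple quotient isomorphic to $\C(V_s)\rtimes \Bbbk\mathz_2$. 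Using Lemma~\ref{prp:Cliff-Jac} this reduces the classification of simple $\C(V,b)$-supermodules (together with the $\Pi$-action) to that of simple $\C(V_s)$-supermodules; in particular, one may assume $V$ is non-degenerate of dimension $r$, the corner case $V_s=0$ (i.e.\ $\delta=0$) being handled at the end.

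Next, I would invoke Lemma~\ref{prp:Clif-lem}: with $V':=V\perp \Bbbk x$, $b'(x,x)=1$, simple $\C(V,b)$-supermodules correspond to simple (ordinary) $\C(V',b')$-modules, and I claim that the parity-change functor $\Pi$ on the super side corresponds to twisting by the involutive algebra automorphism $\tilde\sigma$ of $\C(V',b')$ that is the identity on $V$ and sends $x\mapsto -x$. This follows by direct computation from the explicit isomorphism $\C(V,b)\rtimes \Bbbk\mathz_2\cong \C(V',b')$, $a\rtimes\epsilon\mapsto a\otimes x^\epsilon$, used in the proof of Lemma~\ref{prp:Clif-lem}, together with the fact that $\Pi$ on $\C(V,b)\rtimes \Bbbk\mathz_2$-modules is exactly the twist by the non-trivial character of $\Bbbk\mathz_2$.

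To conclude, fix an orthogonal basis $x_1,\dots,x_r$ of $V$ and set $z:=x_1\cdots x_r\,x\in \C(V',b')$. Short direct computations give the signed determinant $\delta'=(-1)^r \delta$, the identity $z^2=\delta'$, and, when $r$ is even, the centrality $zv=(-1)^r vz=vz$ for all $v\in V'$. Then I would split into three cases:
\begin{enumerate}
\item[(i)] $r$ odd: $\dim V'$ is even, $\C(V',b')$ is central simple over $\Bbbk$ and has a unique simple module, so $\Pi\mathfrak{u}=\mathfrak{u}$;
\item[(ii)] $r$ even with $\delta\notin(\Bbbk^\times)^2$: then $\delta'\notin(\Bbbk^\times)^2$, the center $\Bbbk[z]/(z^2-\delta')$ is a field, $\C(V',b')$ again has a unique simple module, so $\Pi\mathfrak{u}=\mathfrak{u}$;
\item[(iii)] $r$ even with $\delta\in(\Bbbk^\times)^2$: $\delta'\in(\Bbbk^\times)^2$, the center splits as $\Bbbk\times\Bbbk$ via the orthogonal central idempotents $\tfrac12(1\pm z/\sqrt{\delta'})$, and $\C(V',b')$ has exactly two non-isomorphic simples $L_\pm$. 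The decisive identity $\tilde\sigma(z)=x_1\cdots x_r\cdot(-x)=-z$ shows that $\tilde\sigma$ swaps these idempotents, hence $L_+^{\tilde\sigma}\cong L_-$ and the two simples form a single $\Pi$-orbit, giving $\Pi\mathfrak{u}\neq\mathfrak{u}$.
\end{enumerate}
The remaining case $V_s=0$ (equivalently $\delta=0$) fits the pattern of case~(iii): here $V'=\Bbbk x$, $\delta'=1$, $\C(V',b')\cong \Bbbk[x]/(x^2-1)\cong \Bbbk\times\Bbbk$ has two simples swapped by $\tilde\sigma:x\mapsto -x$, so $\Pi\mathfrak{u}\neq\mathfrak{u}$.

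The main obstacle is the bookkeeping in the middle step: correctly translating the super-side functor $\Pi$ into the ordinary-module-side twist by $\tilde\sigma$, and then verifying the clean identity $\tilde\sigma(z)=-z$ that forces the two central idempotents to be interchanged rather than fixed. Once these points are in hand the result reduces to a direct application of the standard Clifford-algebra structure theorem recalled in the appendix.
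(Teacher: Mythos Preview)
Your proof is correct and takes a somewhat different route from the paper in the even-dimensional cases. Both arguments begin by invoking Lemma~\ref{prp:Cliff-Jac} to reduce to the non-degenerate situation, and for $r$ odd both appeal to Lemma~\ref{prp:Clif-lem} and the ordinary simplicity of $\C(V')$. Where you diverge is for $r$ even: the paper switches tools and uses Proposition~\ref{prp:simple_alg} (the Morita equivalence ${}_{A_\eve}\Mod\approx{}_{A\rtimes\Bbbk\mathz_2}\Mod$) together with the structure theorem for $\C(V)_\eve$ quoted just before the proposition, treating the cases $\delta\notin(\Bbbk^\times)^2$ and $\delta\in(\Bbbk^\times)^2$ via $A_\eve$ simple over $\Bbbk(\sqrt{\delta})$ and $A\cong\Mat_{n|n}(D)$ respectively. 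You instead keep Lemma~\ref{prp:Clif-lem} throughout, identify $\Pi$ with the twist by the explicit automorphism $\tilde\sigma$ of $\C(V')$, and then read off everything from the center $\Bbbk[z]/(z^2-\delta')$ and the single identity $\tilde\sigma(z)=-z$. Your approach is more uniform and makes the $\Pi$-action completely transparent; the paper's approach avoids introducing $\tilde\sigma$ and $z$ at the cost of invoking a second equivalence (Proposition~\ref{prp:simple_alg}) and an explicit matrix computation in case~(iii). One small caution: the precise equality $z^2=\delta'$ depends on the normalization $x^2=1$ versus $x^2=b'(x,x)/2$, so you should state the convention you are using; the argument only needs that $z^2$ and $\delta$ differ by a square in $\Bbbk^\times$, which holds under either convention since $r$ is even.
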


\begin{proof}
First, suppose that $\delta=0$.
Then $\C(V,b)$ coincides with the 
exterior superalgebra $\wedge(V)$ on $V$.
It is easy to see that
$\wedge(V)$ has a unique one-dimensional simple supermodule 
$\mathfrak{u}$ up to isomorphism
which satisfies
$\mathfrak{u}\not\cong \Pi \mathfrak{u}$.

Next, suppose that $\delta \in \Bbbk^\times$.
For simplicity, we set $A:=\C(V,b)$ and $r:=\dim V$.
By Lemma~\ref{prp:Cliff-Jac},
it is enough to consider the case when $(V,b)$ is non-degenerate.
(i)~Assume that $r$ is odd.
We use the notation in Lemma~\ref{prp:Clif-lem}.
Note that, $V'$ is also non-degenerate.
Since $\dim V' = r+1$ is even,
we see that $\C(V')$ is (Artinian) simple as an ordinary algebra.
Thus, 
$A$ has a unique simple supermodule $\mathfrak{u}$ up to isomorphism
which satisfies $\Pi\mathfrak{u} = \mathfrak{u}$.
(ii)~Assume that $r$ is even and $\delta\not\in (\Bbbk^\times)^2$.
In this case, $A_\eve$ is (Artinian) simple.
By Proposition~\ref{prp:simple_alg},
$A\rtimes \Bbbk\mathz_2$ is Morita equivalent to $A_\eve$.
Thus, $A$ has a unique simple supermodule $\mathfrak{u}$ up to isomorphism
which satisfies $\Pi\mathfrak{u} = \mathfrak{u}$,
by using the equivalence \eqref{eq:SMod->Mod}.
(iii)~Assume that $r$ is even and $\delta\in(\Bbbk^\times)^2$.
Then the superalgebra $A$ can be identified with
$\Mat_{n|n}(D)$ for
some $n\in\mathbb{N}$ and a central division algebra $D$ over $\Bbbk$.
Thus, in the following, we identify $A_\eve$ with
\[
\Mat_{n|n}(D)_\eve=
\{ 
\left(\begin{array}{c|c} X & 0 \\ \hline 0 & W \end{array}\right)
\mid 
X,W \in\Mat_n(D)\}.
\]
As column vectors of length $2n$ with entries in $D$, we set
\[
e\,\, :=\,\, {}^t(\,\underbrace{1,\dots,1}_{n\text{-times}},{0,\dots,0}\,),
\qquad
e'\,\,:=\,\, {}^t(\,\underbrace{0,\dots,0}_{n\text{-times}},{1,\dots,1}\,).
\]
Then $M:=A_\eve\, e$ and $M':=A_\eve\, e'$ are the distinct simple left $A_\eve$-modules.
By Proposition~\ref{prp:simple_alg},
the corresponding simple left $A$-supermodules are given by
$\mathfrak{u} := A \otimes_{A_\eve} M$ and $\mathfrak{u}' := A \otimes_{A_\eve} M'$.
One easily sees that $\Pi \mathfrak{u}=\mathfrak{u}'$,
and hence $\mathfrak{u} \not= \Pi \mathfrak{u}$.
\end{proof}

\subsection{The Algebraically Closed Case}

Assume that the base field $\Bbbk$ is algebraically closed.
In this section,
we shall calculate the dimensions of simple supermodules
of Clifford superalgebras.

Let $(V,b)$ be a quadratic space over $\Bbbk$, in general.
The quotient space $\overline{V}:=V/\rad(b)$ is non-degenerate.
Set $d:=\dim\overline{V}$.
By Proposition~\ref{prp:Clif-simples}, 
$\C(V,b)$ has a unique simple supermodule $\mathfrak{u}$
up to isomorphism and parity change.

\begin{prop} \label{prp:dim(u)}
The dimension of the vector space $\mathfrak{u}$ is given by $2^{\lfloor (d+1)/2 \rfloor}$,
where $\lfloor (d+1)/2 \rfloor$ is the largest integer less than or equal to
$(d+1)/2$.
\end{prop}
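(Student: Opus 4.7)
The plan is to reduce to the non-degenerate case and then split on the parity of $d$, exploiting that the algebraically closed hypothesis trivializes the Brauer-theoretic obstructions that appeared in the structure theorem quoted before Proposition~\ref{prp:Clif-simples}.

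First I would apply Lemma~\ref{prp:Cliff-Jac}: the Jacobson radical $J$ of $\C(V,b)$ is a nil (hence nilpotent) two-sided ideal with $\C(V,b)/J\cong \C(V_s)\cong \C(\overline V)$, where $V_s$ is a non-degenerate complement to $\rad(b)$ of dimension $d$. Every simple (super)module is annihilated by $J$, so simple $\C(V,b)$-supermodules coincide with simple $\C(\overline V)$-supermodules. This lets me assume $(V,b)$ is non-degenerate of dimension $d$ from the outset.

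Next I would handle the case $d$ even. Since $\Bbbk$ is algebraically closed, $(\Bbbk^\times)^2=\Bbbk^\times$, so the signed determinant $\delta$ automatically lies in $(\Bbbk^\times)^2$, placing us in the third bullet of the structure theorem cited above Proposition~\ref{prp:Clif-simples}. Hence $\C(V,b)\cong \Mat_{n|n}(D)$ as superalgebras, and algebraic closedness forces $D=\Bbbk$. Comparing dimensions $(2n)^2=\dim \C(V,b)=2^d$ gives $n=2^{d/2-1}$. The unique simple $\Mat_{n|n}(\Bbbk)$-supermodule is the column space $\Bbbk^{n|n}$ of dimension $2n=2^{d/2}$, which matches $2^{\lfloor(d+1)/2\rfloor}$.

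For the case $d$ odd I would invoke Lemma~\ref{prp:Clif-lem}: the quadratic space $V':=V\perp \Bbbk x$ (with $b'(x,x)=1$) has even dimension $d+1$, and the proof of that lemma supplies an isomorphism of ordinary algebras $\C(V,b)\rtimes\Bbbk\mathz_2\cong \C(V',b')$. By the even case applied to $V'$, $\C(V',b')\cong \Mat_{m|m}(\Bbbk)$ with $m=2^{(d-1)/2}$; forgetting the $\mathz_2$-grading, this is just $\Mat_{2m}(\Bbbk)$, whose unique simple ordinary module $\Bbbk^{2m}$ has dimension $2m=2^{(d+1)/2}$. Transporting back along the equivalence \eqref{eq:SMod->Mod} identifies this with the unique simple $\C(V,b)$-supermodule $\mathfrak u$, whose underlying vector-space dimension is therefore $2^{(d+1)/2}=2^{\lfloor(d+1)/2\rfloor}$.

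There is no substantial obstacle here; the only care needed is to check that dimensions are preserved under the two equivalences in play (the Hopf-Galois equivalence \eqref{eq:Mod->SMod} and the identification \eqref{eq:SMod->Mod}), which is automatic since both are given on the identity underlying vector space in one direction and by tensoring with the rank-$2$ free module $A$ over $A_{\eve}$ in the other, so the counting in the odd case works out cleanly.
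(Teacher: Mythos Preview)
Your proof is correct and follows essentially the same route as the paper: reduce to the non-degenerate case via Lemma~\ref{prp:Cliff-Jac}, then for even $d$ use $\C(V)\cong\Mat_{n|n}(\Bbbk)$ and for odd $d$ pass to $\C(V')\cong\C(V)\rtimes\Bbbk\mathz_2$ via Lemma~\ref{prp:Clif-lem} and the equivalence~\eqref{eq:SMod->Mod}. The only point to tidy is the boundary case $d=0$, which your even-case dimension count $(2n)^2=2^d$ does not literally cover (it would force $n=\tfrac12$); the paper handles this separately as the $\delta=0$ case, but it is trivial since then $\C=\Bbbk$ and $\dim\mathfrak{u}=1=2^{\lfloor 1/2\rfloor}$.
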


\begin{proof}
First, we suppose that the signed determinant $\delta$ of $V$, 
defined in \eqref{eq:delta}, is zero.
In this case, $\dim\mathfrak{u}=1$ and $d=0$,
and hence the claim follows.
Next, we suppose that $\delta\not=0$ and $d$ is odd.
In this case, the algebra
$\C(\overline{V},\overline{b})\rtimes \Bbbk\mathz_2$ is central simple over $\Bbbk$.
Since $\Bbbk$ is algebraically closed,
one sees that the algebra $\C(\overline{V},\overline{b})\rtimes \Bbbk\mathz_2$ is 
isomorphic to $\Mat_{m}(\Bbbk)$,
where $m=2^{(d+1)/2}$.
Hence, we are done.
Finally, we suppose that $\delta\not=0$ and $d$ is even.
Note that, $\lfloor (d+1)/2 \rfloor = d/2$.
Since $\Bbbk$ is algebraically closed, 
the element $\delta$ ($\not=0$) is always in $(\Bbbk^\times)^2 = \Bbbk^\times$.
Thus,
the algebra $\C(\overline{V},\overline{b})_\eve$ is isomorphic to 
$\Mat_n(\Bbbk)\times \Mat_n(\Bbbk)$,
where $n=2^{d/2}$.
Hence, we have $\dim\mathfrak{u}=n=2^{d/2}$.
\end{proof}

A {\it maximal totally isotropic subspace} $V^\dagger$ of $(V,b)$ 
is a maximal subspace $W$ of $V$ satisfying $b(W,W)=0$.
It is known that the {\it Witt index} 
$\Ind(V) := \lfloor d/2 \rfloor$ of $(V,b)$
coincides with the dimension of the quotient space $V^\dagger/\rad(b)$,
see \cite[Chapter~I, Corollary~4.4]{Lam05} for example.
Therefore,
we obtain $\dim V - \dim V^\dagger = \lfloor (d+1)/2 \rfloor$.
Here, we used $d = \lfloor d/2 \rfloor + \lfloor (d+1)/2 \rfloor$.
Then by Proposition~\ref{prp:dim(u)},
we get
\begin{equation} \label{eq:dim(u)}
\dim \mathfrak{u} \,\, = \,\,
2^{\dim V - \dim V^\dagger}.
\end{equation}
This is another description of the dimension of $\mathfrak{u}$.

\medskip

\section*{Acknowledgements}
Almost all of the manuscript was prepared while 
the author was a 
Pacific Institute for the Mathematical Sciences (PIMS) 
Postdoctoral Fellow position at University of Alberta.
The author would like to thank PIMS for their hospitality and financial support.

\medskip


\begin{thebibliography}{10}

\bibitem{BicRic16}
Julien Bichon and Simon Riche, \emph{Hopf algebras having a dense big cell},
  Trans. Amer. Math. Soc. \textbf{368} (2016), no.~1, 515--538.

\bibitem{Bru06}
Jonathan Brundan, \emph{Modular representations of the supergroup {$Q(n)$}.
  {II}}, Pacific J. Math. \textbf{224} (2006), no.~1, 65--90.

\bibitem{BruKle03}
Jonathan Brundan and Alexander Kleshchev, \emph{Modular representations of the
  supergroup {$Q(n)$}. {I}}, J. Algebra \textbf{260} (2003), no.~1, 64--98,
  Special issue celebrating the 80th birthday of Robert Steinberg.

\bibitem{BruKuj03}
Jonathan Brundan and Jonathan Kujawa, \emph{A new proof of the {M}ullineux
  conjecture}, J. Algebraic Combin. \textbf{18} (2003), no.~1, 13--39.

\bibitem{CheShuWan18}
Shun-Jen {Cheng}, Bin {Shu}, and Weiqiang {Wang}, \emph{Modular representations
  of exceptional supergroups}, to appear in Math. Z. (2018), preprint
  {\rm{\texttt{arXiv:}}}{\bf 1801.05077}~[math.RT].

\bibitem{CheWan12}
Shun-Jen Cheng and Weiqiang Wang, \emph{Dualities and representations of {L}ie
  superalgebras}, Graduate Studies in Mathematics, vol. 144, American
  Mathematical Society, Providence, RI, 2012. \MR{3012224}

\bibitem{Che97}
Claude Chevalley, \emph{The algebraic theory of spinors and {C}lifford
  algebras}, Collected works. Vol. 2, Springer-Verlag, Berlin, 1997.

\bibitem{FioGav12}
Rita Fioresi and Fabio Gavarini, \emph{Chevalley supergroups}, Mem. Amer. Math.
  Soc. \textbf{215} (2012), no.~1014, vi+64.

\bibitem{FioGav13}
\bysame, \emph{Algebraic supergroups with {L}ie superalgebras of classical
  type}, J. Lie Theory \textbf{23} (2013), no.~1, 143--158. \MR{3060770}

\bibitem{Jan03}
Jens~Carsten Jantzen, \emph{Representations of algebraic groups}, second ed.,
  Mathematical Surveys and Monographs, vol. 107, American Mathematical Society,
  Providence, RI, 2003.

\bibitem{Kac77}
Victor~G. Kac, \emph{Lie superalgebras}, Advances in Math. \textbf{26} (1977),
  no.~1, 8--96.

\bibitem{Kac77-3}
\bysame, \emph{Representations of classical {L}ie superalgebras}, Differential
  geometrical methods in mathematical physics, {II} ({P}roc. {C}onf., {U}niv.
  {B}onn, {B}onn, 1977), Lecture Notes in Math., vol. 676, Springer, Berlin,
  1978, pp.~597--626. \MR{519631}

\bibitem{Lam05}
Tsit~Yuen Lam, \emph{Introduction to quadratic forms over fields}, Graduate
  Studies in Mathematics, vol.~67, American Mathematical Society, Providence,
  RI, 2005.

\bibitem{Mar18}
Franti\v{s}ek Marko, \emph{Irreducibility of induced supermodules for general
  linear supergroups}, J. Algebra \textbf{494} (2018), 92--110. \MR{3723172}

\bibitem{MarZub18}
Franti\v{s}ek Marko and Alexandr~N. Zubkov, \emph{Blocks for the general linear
  supergroup {${\rm GL}(m|n)$}}, Transform. Groups \textbf{23} (2018), no.~1,
  185--215. \MR{3763946}

\bibitem{Mas05}
Akira Masuoka, \emph{The fundamental correspondences in super affine groups and
  super formal groups}, J. Pure Appl. Algebra \textbf{202} (2005), 284--312.

\bibitem{Mas12}
\bysame, \emph{Harish-{C}handra pairs for algebraic affine supergroup schemes
  over an arbitrary field}, Transform. Groups \textbf{17} (2012), no.~4,
  1085--1121.

\bibitem{MasShi17}
Akira Masuoka and Taiki Shibata, \emph{Algebraic supergroups and
  {H}arish-{C}handra pairs over a commutative ring}, Trans. Amer. Math. Soc.
  (2017), no.~369, 3443--3481.

\bibitem{MasShi18}
\bysame, \emph{On functor points of affine supergroups}, J. Algebra (2018),
  no.~503, 534--572.

\bibitem{MasTak18}
Akira Masuoka and Yuta Takahashi, \emph{{Geometric construction of quotients
  $G/H$ in supersymmetry}}, preprint {\rm{\texttt{arXiv:}}} \textbf{1808.05753}
  (2018), [math.AG].

\bibitem{MasZub11}
Akira Masuoka and Alexandr~N. Zubkov, \emph{Quotient sheaves of algebraic
  supergroups are superschemes}, J. Algebra \textbf{348} (2011), 135--170.

\bibitem{Mon93}
Susan Montgomery, \emph{Hopf algebras and their actions on rings}, CBMS
  Regional Conference Series in Mathematics, vol.~82, Published for the
  Conference Board of the Mathematical Sciences, Washington, DC; by the
  American Mathematical Society, Providence, RI, 1993.

\bibitem{Mus12}
Ian~M. Musson, \emph{Lie superalgebras and enveloping algebras}, Graduate
  Studies in Mathematics, vol. 131, American Mathematical Society, Providence,
  RI, 2012.

\bibitem{ParWan91}
Brian Parshall and Jian~Pan Wang, \emph{Quantum linear groups}, Mem. Amer.
  Math. Soc. \textbf{89} (1991), no.~439, vi+157.

\bibitem{Pen88-1}
Ivan Penkov, \emph{Borel-{W}eil-{B}ott theory for classical {L}ie supergroups},
  Current problems in mathematics. {N}ewest results, {V}ol. 32, Itogi Nauki i
  Tekhniki, Akad. Nauk SSSR, Vsesoyuz. Inst. Nauchn. i Tekhn. Inform., Moscow,
  1988, Translated in J. Soviet Math. {{\bf{5}}1} (1990), no. 1, 2108--2140,
  pp.~71--124. \MR{957752}

\bibitem{PenSer92}
Ivan Penkov and Vera Serganova, \emph{Representations of classical {L}ie
  superalgebras of type {${\rm I}$}}, Indag. Math. (N.S.) \textbf{3} (1992),
  no.~4, 419--466. \MR{1201236}

\bibitem{Ser11}
Vera Serganova, \emph{Quasireductive supergroups}, New developments in {L}ie
  theory and its applications, Contemp. Math., vol. 544, Amer. Math. Soc.,
  Providence, RI, 2011, pp.~141--159.

\bibitem{Serg01}
Alexander Sergeev, \emph{An analog of the classical invariant theory for {L}ie
  superalgebras. {I}, {II}}, Michigan Math. J. \textbf{49} (2001), no.~1,
  113--146, 147--168. \MR{1827078}

\bibitem{ShuWan08}
Bin Shu and Weiqiang Wang, \emph{Modular representations of the
  ortho-symplectic supergroups}, Proc. Lond. Math. Soc. (3) \textbf{96} (2008),
  no.~1, 251--271.

\bibitem{Wal64}
Charles Terence~Clegg Wall, \emph{Graded {B}rauer groups}, J. Reine Angew.
  Math. \textbf{213} (1963/1964), 187--199.

\bibitem{Wei09}
Rainer Weissauer, \emph{Semisimple algebraic tensor categories}, preprint
  {\rm{\texttt{arXiv:}}} \textbf{0909.1793} (2009), [math.CT].

\bibitem{Zub06}
Alexandr~N. Zubkov, \emph{Some properties of general linear supergroups and of
  {S}chur superalgebras}, Algebra Logika \textbf{45} (2006), no.~3, 257--299,
  375.

\bibitem{Zub16}
\bysame, \emph{Some homological properties of {${\rm GL}(m|n)$} in arbitrary
  characteristic}, J. Algebra Appl. \textbf{15} (2016), no.~7, 1650119, 26.
  \MR{3528547}

\end{thebibliography}

\providecommand{\bysame}{\leavevmode\hbox to3em{\hrulefill}\thinspace}
\providecommand{\MR}{\relax\ifhmode\unskip\space\fi MR }
\providecommand{\MRhref}[2]{%
  \href{http://www.ams.org/mathscinet-getitem?mr=#1}{#2}
}
\providecommand{\href}[2]{#2}

\end{document}